\newtheorem{thm}{Theorem}[section]
\newtheorem{cor}[thm]{Corollary}
\newtheorem{lem}[thm]{Lemma}
\newtheorem{prop}[thm]{Proposition}
\theoremstyle{definition}
\newtheorem{df}[thm]{Definition}
\newtheorem{ex}[thm]{Example}
\newenvironment{rmk}
{\par\noindent\textbf{Remark.}\noindent}{}
\newcommand{\R}{\mathbb R}
\newcommand{\Q}{\mathbb Q}
\newcommand{\Sph}{\mathbb S}
\newcommand{\cat}[1]{\textup{\textbf{{#1}}}}
\newcommand{\Map}{\textup{Map}}
\newcommand{\id}{\textup{id}}
\newcommand{\colim}{\textup{colim}}
\newcommand{\hocolim}{\textup{hocolim}}
\newcommand{\holim}{\textup{holim}}
\newcommand{\ra}{\longrightarrow}
\newcommand{\la}{\longleftarrow}
\newcommand{\sma}{\wedge}
\newcommand{\barsmash}{\,\overline\wedge\,}
\newcommand{\simar}{\overset\sim\longrightarrow}
\newcommand{\congar}{\overset\cong\longrightarrow}
\newcommand{\mc}{\mathcal}
\newcommand{\ad}{\textup{Ad}}
\newcommand{\op}{\textup{op}}
\newcommand{\inj}{\hookrightarrow}
\newcommand{\cross}{\textup{cross}}
\newcommand{\cocross}{\textup{cocross}}
\newcommand{\fib}{\textup{fib}\,}
\newcommand{\cofib}{\textup{cofib}\,}
\newcommand{\hofib}{\textup{hofib}\,}
\newcommand{\tfib}{\textup{tfib}\,}
\newcommand{\tcofib}{\textup{tcofib}\,}
\newcommand{\il}{\underline{i}}
\newcommand{\jl}{\underline{j}}
\newcommand{\kl}{\underline{k}}
\newcommand{\nl}{\underline{n}}
\newcommand{\haut}{\textup{haut}}
\newcommand{\fin}{\textup{fin}}
\renewcommand{\uline}{\underline}
\title{A tower connecting gauge groups to string topology}
\author{Cary Malkiewich}
\begin{document}

\maketitle
\begin{abstract}
We develop a variant of calculus of functors, and use it to relate the gauge group $\mc G(\mc P)$ of a principal bundle $\mc P$ over $M$ to the Thom ring spectrum $(\mc P^\ad)^{-TM}$.
If $\mc P$ has contractible total space, the resulting Thom ring spectrum is $LM^{-TM}$, which plays a central role in string topology.
Cohen and Jones have recently observed that, in a certain sense, $(\mc P^\ad)^{-TM}$ is the linear approximation of $\mc G(\mc P)$.
We prove an extension of that relationship by demonstrating the existence of higher-order approximations and calculating them explicitly.
This also generalizes calculations done by Arone in \cite{arone_snaith}.
\end{abstract}

\parskip 0ex
\tableofcontents
\parskip 2ex

\section{Introduction}

If $M$ is a closed oriented manifold and $LM = \Map(S^1,M)$ is its free loop space, then the homology $H_*(LM)$ has a \emph{loop product} first described by Chas and Sullivan \cite{chas1999string}.
This loop product is homotopy invariant \cite{cohen2008homotopy} and has been calculated in a number of examples \cite{cohen2004loop}.
In \cite{felix2004monoid}, F\'elix and Thomas studied the loop product by defining a multiplication-preserving map
\begin{equation}\label{intro_felix}
H_*(\Omega_\id \haut(M);\Q) \ra H_{*+\dim M}(LM;\Q)
\end{equation}
where $\haut(M)$ is the space of self-homotopy equivalences of $M$, and the loops are based at the identity map of $M$.

In \cite{cohen2002homotopy}, Cohen and Jones described a ring spectrum $LM^{-TM}$ whose homology is $H_*(LM)$ but with a grading shift.
The multiplication on $LM^{-TM}$ gives the loop product on $H_*(LM)$, and the map of F\'elix and Thomas (\ref{intro_felix}) comes from a map of ring spectra
\begin{equation}\label{intro_2spectra}
\Sigma^\infty_+ \Omega_\id \haut M \ra LM^{-TM} 
\end{equation}
by taking rational homology groups.
In \cite{cohen2013gauge}, Cohen and Jones extend this map of ring spectra to a \emph{natural transformation of functors}
\begin{equation}\label{map_cohen_jones}
F \ra L
\end{equation}
\[ \begin{array}{c}
F,L: \mc R_M^\op \ra \mc Sp \\
F(M \amalg M) = \Sigma^\infty_+ \Omega_\id \haut M \\
L(M \amalg M) \simeq LM^{-TM}
\end{array} \]
Here $\mc R_M$ is the category of retractive spaces over $M$ and $\mc Sp$ is the category of spectra.
We will give these functors explicitly in section \ref{mapping}.
Both $F$ and $L$ are required to be \emph{homotopy functors}, meaning that they send equivalences of spaces to equivalences of spectra.
Cohen and Jones show that $L$ is the universal approximation of $F$ by an \emph{excisive} homotopy functor, i.e. one that takes each homotopy pushout square
\[ \xymatrix{
A \ar[r] \ar[d] & B \ar[d] \\
C \ar[r] & D } \]
to a homotopy pullback square
\[ \xymatrix{
L(A) & L(B) \ar[l] \\
L(C) \ar[u] & L(D) \ar[u] \ar[l] } \]
Such an $L$ takes finite sums of spaces to finite products of spectra.
This type of analysis is similar in spirit to Goodwillie's homotopy calculus of functors (\cite{calc1}, \cite{calc3}), though it is different in substance because the functors $F$ and $L$ are contravariant.
It is much more similar to the manifold calculus of Goodwillie and Weiss (\cite{weiss1999embeddings}, \cite{goodwillie1999embeddings}), though again it is subtly different because the functor $F$ has been defined on all spaces, and not just manifolds and embeddings.

Of course, in homotopy calculus one approximates a functor $F$ by an $n$-excisive functor $P_n F$ for each integer $n \geq 0$.
These fit into a tower
\[ F \ra \ldots \ra P_n F \ra \ldots \ra P_2 F \ra P_1 F \ra P_0 F \]
and one extracts information about $F$ from the layers
\[ D_n F := \hofib(P_n F \ra P_{n-1} F) \]
The map of functors (\ref{map_cohen_jones}) described by Cohen and Jones is only the first level of this tower:
\[ F \ra P_1 F \]
The main goal of this paper is to extend their construction by building the rest of the tower.
In order to do this, we develop a variant of homotopy calculus for contravariant functors from retractive spaces over $M$ to spectra.

In Definition \ref{n_excisive} we define $n$-excisive contravariant functors.
Our main results on $n$-excisive functors are Theorems \ref{final_second} and \ref{final_third}, which imply
\begin{thm}\label{intro_third}
Let $B$ be an unbased space. Consider a contravariant homotopy functor $F : \mc C^\op \ra \mc D$, where one of the following holds:
\begin{itemize}
\item $\mc C$ is the category of unbased finite CW complexes over $B$, and $\mc D$ is the category of based spaces or spectra.
\item $\mc C$ is the category of based finite CW complexes, and $\mc D$ is the category of based spaces or spectra.
\item $\mc C$ is the category of finite retractive CW complexes over $B$, and $\mc D$ is the category of spectra.
\end{itemize}
Then there exists a universal $n$-excisive approximation to $F$, called $P_n F$, and the natural transformation $F(X) \ra P_n F(X)$ is an equivalence when $X$ is a disjoint union of the initial object and $i$ discrete points, $0 \leq i \leq n$.
\end{thm}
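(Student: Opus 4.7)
The plan mimics Goodwillie's construction of $P_n$ in homotopy calculus, dualized for the contravariant setting. For each $X \in \mc C$, I build a canonical strongly cocartesian $(n+1)$-cube $\mc X: \mc P(\uline{n+1}) \to \mc C$ with terminal vertex $\mc X(\uline{n+1}) = X$, via a dual of Goodwillie's join construction adapted to the source category (over $M$, based, or retractive). Applying $F$ contravariantly, $F(X)$ receives a map from each $F(\mc X(W))$ for $W \subsetneq \uline{n+1}$, and I set
\[ T_n F(X) \;:=\; \holim_{W \subsetneq \uline{n+1}} F(\mc X(W)), \qquad F(X) \to T_n F(X). \]
This map is an equivalence whenever $F$ sends the cube to a cartesian cube in $\mc D$; in particular, $T_n$ is the identity on $n$-excisive functors. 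Functoriality in $X$ is handled by the standard Goodwillie device of working over all $(n+1)$-cubes simultaneously rather than a single choice.

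Define $P_n F := \hocolim_k T_n^k F$. Universality of $F \to P_n F$ is then formal: any natural transformation from $F$ to an $n$-excisive $G$ extends through each $T_n^k F$ (since $T_n$ acts as the identity on $G$) and therefore through the telescope. The substantive claim is that $P_n F$ is itself $n$-excisive, and I expect this to be the main obstacle. Following Goodwillie, the key ingredient is a Blakers--Massey-type connectivity estimate proving that each application of $T_n$ improves the failure of cartesianness of any test cube (measured by the connectivity of a total fiber), so the failure vanishes in the telescope. The contravariant adaptation must be verified in each of the three source categories; the finite CW hypothesis allows a cellular induction.

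For the evaluation claim, let $X = M \amalg \{p_1, \ldots, p_i\}$ with $0 \le i \le n$. The cube $\mc X$ can be arranged with $\mc X(W) = M \amalg \{p_j : j \in W \cap \uline{i}\}$, which is strongly cocartesian, has $\mc X(\uline{n+1}) = X$, and is constant in the $n+1-i \ge 1$ coordinate directions $i+1, \ldots, n+1$. Then $F\mc X$ is also constant in those directions, and any cube constant in some direction $k$ is cartesian: the indexing projection $W \mapsto W \setminus \{k\}$ from the punctured cube onto $\mc P(\uline{n+1} \setminus \{k\})^\op$ is homotopy initial (its over-categories are of the form $\mc P(S) \setminus \{S\}$ for nonempty $S$, contractible via the cone at $\emptyset$), and the reduced $\holim$ collapses to the value at $\uline{n+1} \setminus \{k\}$, which is $F(X)$. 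Hence $F(X) \to T_n F(X)$ is already an equivalence, and iterating gives $F(X) \simeq P_n F(X)$.
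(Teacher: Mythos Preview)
There is a genuine gap at the first step: you never construct the cube $\mc X$. Goodwillie's join produces a strongly cocartesian $(n+1)$-cube with \emph{initial} vertex $X$ and terminal vertex $X * \uline{n+1}$; there is no evident dual that yields a natural strongly cocartesian cube with \emph{terminal} vertex an arbitrary finite complex $X$ (open covers do this for manifolds in embedding calculus, but not here). Without a concrete cube, $T_n F$ is undefined, and your later ad hoc choice of cube for discrete $X$ cannot be reconciled with a functorial definition you have not given. Even granting some cube, the step you correctly flag as the obstacle---that the telescope $\hocolim_k T_n^k F$ is $n$-excisive---is the entire substance of Goodwillie's argument and depends on the specific combinatorics of the join (his key Lemma~1.9); ``a Blakers--Massey-type connectivity estimate'' is an aspiration, not a proof, and there is no formal dualization available.

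The paper takes a completely different route that exploits a feature peculiar to the contravariant setting. It builds $P_n F$ in one step as (a variant of) the homotopy right Kan extension of $F$ from the subcategory of discrete objects with at most $n$ points: concretely, in the unbased case $P_n F(X)$ is a homotopy limit, over all maps $\il \times \Delta^p \to X$ with $i \le n$, of the values $F(\il \times \Delta^p)$. This is polynomial interpolation, not Taylor approximation---one samples $F$ on the finite sets and extends. Agreement of $F \to P_n F$ on $\il$ is then nearly tautological (the indexing category acquires an initial object), and $n$-excisiveness is proved by a cofinality argument showing that a certain functor of indexing categories is homotopy initial, with no iteration and no connectivity estimates. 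The retractive-over-$B$-to-spectra case requires an additional trick using co-cross effects to avoid enriching the indexing category.
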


\begin{rmk}
It has been pointed out to the author that the procedure found in \cite{de2012manifold} can be adapted to generalize manifold calculus from the category of manifolds to a broader category of topological spaces.
This method should also give results along the lines of Thm. \ref{intro_third}.
\end{rmk}

In section \ref{mapping} we explicitly define the functors of Cohen and Jones that extend the map of ring spectra (\ref{intro_2spectra}), and in section \ref{string_topology} we explicitly calculate the tower that extends the map of Cohen and Jones.
We explain in Proposition \ref{exists} why the above universal theorem is needed to conclude that our tower is correct.
Along the way to proving Theorems \ref{final_second} and \ref{final_third}, we prove a splitting result on homotopy limits in Proposition \ref{holim_twisted_split} that is reminiscent of a result of Dwyer and Kan (\cite{dwyerkanfunction}) on mapping spaces of diagrams.
This all implies the main result of the paper:

\begin{thm}\label{intro_string_tower}
There is a tower of homotopy functors
\[ F \ra \ldots \ra P_n F \ra \ldots \ra P_2 F \ra P_1 F \ra P_0 F \]
from finite retractive CW complexes over $M$ into spectra such that
\begin{enumerate}
\item $P_n F$ is the universal $n$-excisive approximation of $F$.
\item The map $F \ra P_1 F$ is the map (\ref{map_cohen_jones}) of Cohen and Jones.
\item $F(M \amalg M) \cong \Sigma^\infty_+ \Omega_\id \haut M$.
\item $P_1 F(M \amalg M) \simeq LM^{-TM}$.
\item $P_0 F(M \amalg M) = *$.
\item If $X$ is any finite retractive CW complex over $M$, the maps
\[ F(X) \ra P_n F(X) \ra P_{n-1}F(X) \]
are maps of ring spectra.
\item For all $n \geq 1$, $D_n F(M \amalg M)$ is equivalent to the Thom spectrum
\[ \mc C(LM;n)^{-TC(M;n)} \]
Here $C(M;n)$ is the space of unordered configurations of $n$ points in $M$, and $\mc C(LM;n)$ is the space of unordered collections of $n$ free loops in $M$ with distinct basepoints.
\end{enumerate}
\end{thm}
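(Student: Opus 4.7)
The plan is to leverage Theorem \ref{intro_third} for the abstract tower construction, and to focus the substantive work on identifying the low terms and the layers. I would first spell out the functor $F : \mc R_M^\op \to \mc Sp$ from section \ref{mapping}: it sends a retractive space $X$ over $M$ to (the suspension spectrum of) a certain component of the retractive mapping space $\Map_{\mc R_M}(X, M \amalg M)$. Part (1) is then a direct application of Theorem \ref{intro_third} to this $F$. Part (3) is the defining value, since $\Map_{\mc R_M}(M \amalg M, M \amalg M)$ contains a component naturally equivalent to $\Omega_\id \haut M$. Part (5) follows because $F$ is reduced in the appropriate retractive sense (the mapping space for $X = M$ collapses to a point), and the $0$-excisive approximation of such a reduced functor is trivial.

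Parts (2), (4), and (6) address the first level of the tower and its multiplicative structure. For (2) and (4), I would identify $P_1 F$ with Cohen and Jones's excisive functor $L$ and verify $L(M \amalg M) \simeq LM^{-TM}$; since Theorem \ref{intro_third} asserts that $P_1 F$ is the \emph{universal} excisive approximation, and \cite{cohen_new} independently establishes the analogous universal property of $L$, these two universal objects must agree. For (6), the ring structure on $F$ arises from composing self-maps of $M \amalg M$ in $\mc R_M$, which defines a pairing $F(X) \sma F(X) \to F(X)$; I would check that the homotopy-limit construction of $P_n F$ intertwines with this smash pairing, so that the composites $F \to P_n F \to P_{n-1} F$ are maps of ring spectra.

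The central and most delicate task is part (7). The approach is to first derive a cross-effect style formula for $D_n F$: by the general theory of $n$-excisive approximation, the $n$-th layer is controlled by the $n$-th cocross-effect $\cocross_n F$, a multivariable functor that is $1$-excisive and reduced in each argument. I would evaluate $\cocross_n F$ at $n$ copies of an appropriate ``minimal'' retractive object and then, using Proposition \ref{holim_twisted_split}, split the resulting homotopy limit along unordered configurations of $n$ distinct points in $M$. Interpreting the fiber over a configuration as the space of $n$ free loops in $M$ with basepoints at those points yields $\mc C(LM;n)$, while the twist by $-TC(M;n)$ arises from an Atiyah-duality contribution as the configuration varies continuously over the configuration space. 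The main obstacle is precisely this last identification: establishing the correct $\Sigma_n$-equivariance and the correct tangential twist on the layer. Tracking these twists is typically the most subtle aspect of any Goodwillie- or Weiss-style layer calculation, and Proposition \ref{holim_twisted_split} is the key tool for making the bookkeeping tractable.
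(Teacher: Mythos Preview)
Your description of the functor $F$ is incorrect: the target of the mapping space is not $M \amalg M$ but the ex-fibration $E = LM \amalg M$ over $M$. Concretely, $F(X) = \Sigma^\infty \Map_M(X, LM \amalg M)$, and $F(M \amalg M) \cong \Sigma^\infty_+ \Gamma_M(LM) \cong \Sigma^\infty_+ \Omega_\id \haut M$ because a section of $LM \to M$ is a loop in $\Map(M,M)$ based at the identity. With the target $M \amalg M$ your mapping space would be discrete. Relatedly, the ring structure in (6) does not come from composition of self-maps; it comes from the fiberwise loop multiplication on $LM$ (more generally the fiberwise group multiplication on $\mc P^\ad$), which makes $\Map_M(X,E)$ a monoid and hence $\Sigma^\infty$ of it a ring spectrum.

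More fundamentally, your overall strategy diverges from the paper's and is considerably less direct. The paper does \emph{not} invoke the abstract existence theorem and then attempt to compute layers via cross-effects and Proposition~\ref{holim_twisted_split}. Instead it writes down an explicit candidate
\[
P_n F(X) = \Map_{(\cat M_n^\op,\{B^i\})}\bigl(X^{\barsmash i}, \Sigma^\infty_{B^i} E^{\barsmash i}\bigr),
\]
proves directly (section~\ref{correct}) that this candidate is $n$-excisive and agrees with $F$ on $\mc R_{M,n}$, and then appeals to the Recognition Principle (Corollary~\ref{recognition}) to conclude it is the universal approximation. The abstract construction in sections~\ref{firstconstruction}--\ref{hack} (where Proposition~\ref{holim_twisted_split} actually appears) is needed only to know that \emph{some} universal $P_n F$ exists, so that the recognition argument is valid; it is not used to compute anything in the string-topology example.

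Once the explicit formula is in hand, the layer falls out immediately as
\[
D_n F(X) \simeq \Map_{B^n}\bigl(X^{\barsmash n}/_{B^n}\Delta,\ \Sigma^\infty_{B^n} E^{\barsmash n}\bigr)^{\Sigma_n},
\]
and for $X = M \amalg M$, $E = LM \amalg M$ this becomes a space of compactly supported sections over $C(M;n)$, which is converted to the Thom spectrum $\mc C(LM;n)^{-TC(M;n)}$ by parametrized Poincar\'e duality for the open manifold $C(M;n)$ (section~\ref{layers}). Your outline gestures at ``Atiyah duality'' but does not isolate this step; in the paper it is the entire content of the layer identification, and no cross-effect or twisted-arrow splitting is involved.
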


This linearization phenomenon is even more general.
Consider any principal bundle
\[ G \ra \mc P \ra M \]
The \emph{gauge group} $\mc G(\mc P)$ is defined to be the space of automorphisms of $\mc P$ as a principal bundle.
It is a classical fact that there is an associated \emph{adjoint bundle} $\mc P^\ad$, and that the gauge group $\mc G(\mc P)$ may be identified with the the space of sections $\Gamma_M(\mc P^\ad)$.

Gruher and Salvatore show in \cite{gruher2008generalized} that one may construct a Thom ring spectrum $(\mc P^\ad)^{-TM}$ out of the total space $\mc P^\ad$ of the adjoint bundle.
The multiplication on this ring spectrum gives a product on the homology $H_*(\mc P^\ad)$.
When the total space of $\mc P$ is contractible, the adjoint bundle $\mc P^\ad$ is equivalent to the free loop space $LM$, and the Gruher-Salvatore product on $H_*(\mc P^\ad)$ agrees with the Chas-Sullivan loop product on $H_*(LM)$.

In \cite{cohen2013gauge}, Cohen and Jones show that the map (\ref{intro_2spectra}) of ring spectra generalizes to a map of ring spectra
\begin{equation}\label{intro_2spectra_generalized}
\Sigma^\infty_+ \mc G(\mc P) \ra (\mc P^\ad)^{-TM} 
\end{equation}
Taking homology groups gives a multiplication-preserving map
\begin{equation}
H_*(\mc G(\mc P)) \ra H_{* + \dim M}(\mc P^\ad)
\end{equation}
which generalizes the map (\ref{intro_felix}) studied by F\'elix and Thomas.
Cohen and Jones extend this generalized map of ring spectra to a map of functors $F \ra L$ and show that $L$ is the universal approximation of $F$ by an excisive functor.
We extend their generalized result here as well:
\begin{thm}\label{intro_principal_tower}
There is a tower of homotopy functors
\[ F \ra \ldots \ra P_n F \ra \ldots \ra P_2 F \ra P_1 F \ra P_0 F \]
from finite retractive CW complexes over $M$ into spectra such that
\begin{enumerate}
\item $P_n F$ is the universal $n$-excisive approximation of $F$.
\item The map $F \ra P_1 F$ is the generalized map of Cohen and Jones.
\item $F(M \amalg M) \cong \Sigma^\infty_+ \mc G(\mc P)$.
\item $P_1 F(M \amalg M) \simeq (\mc P^\ad)^{-TM}$.
\item $P_0 F(M \amalg M) = *$.
\item If $X$ is any finite retractive CW complex over $M$, the maps
\[ F(X) \ra P_n F(X) \ra P_{n-1}F(X) \]
are maps of ring spectra.
\item For all $n \geq 1$, $D_n F(M \amalg M)$ is equivalent to the Thom spectrum
\[ \mc C(\mc P^\ad ;n)^{-TC(M;n)} \]
where $\mc C(\mc P^\ad ;n)$ is the space of unordered configurations of $n$ points in the total space $\mc P^\ad$ which have distinct images in $M$.
\end{enumerate}
\end{thm}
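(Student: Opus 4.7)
The plan is to follow the same template as for Theorem \ref{intro_string_tower}, replacing the free loop space $LM$ throughout by the total space of the adjoint bundle $\mc P^\ad$. The first step is to produce a contravariant homotopy functor $F: \mc R_M^\op \ra \mc Sp$ arranged so that $F(M \amalg M) \cong \Sigma^\infty_+ \mc G(\mc P)$ and $F(M) \simeq *$. Exploiting the classical identification $\mc G(\mc P) = \Gamma_M(\mc P^\ad)$, the natural definition is to take $F(X)$ to be a suspension spectrum of sections of the pullback of $\mc P^\ad$ along the retraction of $X$, relative to the structure map $M \ra X$. The ring spectrum structure is inherited from the fiberwise multiplication of $\mc P^\ad \ra M$, exactly as in \cite{cohen_new}.

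With $F$ so defined, Theorem \ref{intro_third} immediately produces the tower $F \ra \cdots \ra P_n F \ra \cdots \ra P_0 F$ together with the universal property, which is claim (1). Claim (3) then holds by construction, and claim (5) follows because $F$ sends the initial object $M$ to a point. For claim (4), the universal first excisive approximation of $F$ is identified with the Gruher-Salvatore Thom spectrum $(\mc P^\ad)^{-TM}$, just as Cohen and Jones do in \cite{cohen_new}. Claim (2) then follows formally: the generalized Cohen-Jones map lands in an excisive functor and so, by the universal property in Theorem \ref{intro_third}, must factor through $P_1 F$ and induce the stated equivalence. Claim (6) is obtained by arranging the homotopy limits that compute each $P_n F$ to be compatible with the multiplication on $F$, parallel to the untwisted case.

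The main obstacle is claim (7), the identification of the layer $D_n F(M \amalg M)$ with the Thom spectrum $\mc C(\mc P^\ad; n)^{-TC(M; n)}$. The strategy is first to derive a general formula for $D_n F$ in terms of the contravariant cross-effects used in the construction of $P_n F$, and then to specialize to $X = M \amalg M$. Applying Proposition \ref{holim_twisted_split} splits the resulting homotopy limit into a piece indexed by the unordered configuration space $C(M; n)$ and a twisted piece capturing the bundle data, producing a Thom spectrum over $C(M; n)$ whose underlying space is the fiberwise $n$-fold product of $\mc P^\ad$ restricted to configurations with distinct images in $M$; this space is precisely $\mc C(\mc P^\ad; n)$. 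When $\mc P$ has contractible total space, $\mc P^\ad \simeq LM$ and we recover Theorem \ref{intro_string_tower}. The hard part is organizing the twisting so that the symmetric group action defining the unordered configurations extends coherently over the bundle fibers, and Proposition \ref{holim_twisted_split} is designed precisely to make this splitting tractable in the presence of nontrivial coefficient systems.
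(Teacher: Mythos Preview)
Your overall strategy matches the paper's: one specializes the general tower for $F(X)=\Sigma^\infty\Map_B(X,E)$ to $B=M$ and $E=\mc P^\ad\amalg M$. But the paper does not obtain $P_nF$ by invoking the abstract existence theorem and then trying to read off the layers from the holim description. Instead it writes down an explicit candidate
\[
P_nF(X)=\Map_{(\cat M_n^\op,\{B^i\})}(X^{\barsmash i},\Sigma^\infty_{B^i}E^{\barsmash i}),
\]
verifies directly that this is $n$-excisive and agrees with $F$ on $\mc R_{B,n}$, and then appeals to the recognition principle (Cor.~\ref{recognition}) together with Thm.~\ref{final_third}. The layer $D_nF$ is then immediately visible as $\Sigma_n$-equivariant maps out of $X^{\barsmash n}/_{B^n}\Delta$, and for $X=M\amalg M$ one gets sections of $\Sigma^\infty_{M^n}E^{\barsmash n}$ relative to the fat diagonal, hence compactly supported sections over $F(M;n)$, and finally the Thom spectrum $\mc C(\mc P^\ad;n)^{-TC(M;n)}$ by Poincar\'e duality for the noncompact manifold $C(M;n)$ with twisted coefficients.

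Your route to claim (7) has a genuine gap. Proposition~\ref{holim_twisted_split} is a bookkeeping device for splitting a holim over a forwards Grothendieck construction into an iterated holim over a twisted arrow category; it is used in the paper only to establish homotopy invariance of the abstract $P_nF$ construction, and it does not by itself produce a Thom spectrum or isolate a configuration-space index. The passage from a sections spectrum over $C(M;n)$ to the Thom spectrum $(-)^{-TC(M;n)}$ is Poincar\'e duality, and you never invoke it. Without the explicit mapping-space model for $P_nF$, you also have no obvious description of $D_nF$ to which duality could be applied. Finally, claim (6) in the paper comes from that same explicit model: the ring structure is pointwise multiplication of maps into $\Sigma^\infty_{B^i}E^{\barsmash i}$, which is why orthogonal spectra give strict associativity; the abstract holim description does not make this transparent.
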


The outline of the paper is as follows.
In Section 2, we define $n$-excisive functors and give criteria for recognizing the universal $n$-excisive approximation $P_n F$ of a given functor $F$.
In Section 3, we give a detailed construction of a tower which generalizes the above two examples.
In Section 4, we specialize to the above two examples and do some computations.
Sections 5-8 supply a missing ingredient from the previous sections: a functorial construction of $P_n F$ for general $F$.
This material may be of independent interest in the general study of calculus of functors.

The author would like to acknowledge Greg Arone, Boris Chorny, Ralph Cohen, John Klein, and Peter May for many enlightening ideas and helpful conversations in the course of this project.
This paper represents a part of the author's Ph.D. thesis, written under the direction of Ralph Cohen at Stanford University.

\section{Excisive functors}

Fix an unbased space $B$.
Like every space that follows, we assume it is compactly generated and weak Hausdorff.

\begin{df}\label{cats}
\begin{itemize}
\item Let $\mc S_B$ be the category of spaces over $B$.
The objects are spaces $X$ equipped with maps $X \ra B$.
Define two subcategories
\[ \mc S_{B,n} \subset \mc S_{B,\fin} \subset \mc S_B \]
as follows.
The subcategory $\mc S_{B,\fin}$ consists of all finite CW complexes over $B$.
The subcategory $\mc S_{B,n}$ consists of discrete spaces with at most $n$ points over $B$.
For simplicity, we may as well assume that the finite CW complexes are embedded in $B \times \R^\infty$, and that $\mc S_{B,n}$ has only one space $\il = \{1,\ldots,i\}$ for each $0 \leq i \leq n$ and each map $\il \ra B$.
\item Let $\mc R_B$ be the category of spaces containing $B$ as a retract.
As before, define two subcategories
\[ \mc R_{B,n} \subset \mc R_{B,\fin} \subset \mc R_B \]
where $\mc R_{B,\fin}$ consists of spaces $X$ for which $(X,B)$ is a finite relative CW complex, and $\mc R_{B,n}$ consists of spaces of the form $\il \amalg B$, $\il = \{1,\ldots,i\}$, for $0 \leq i \leq n$.
\end{itemize}
\end{df}

Of course, if $B = *$ then $\mc S_B$ and $\mc R_B$ are the familiar categories of unbased spaces $\mc U$ and based spaces $\mc T$, respectively.
The following definition should be seen as an analogue of Goodwillie's notion of \emph{$n$-excisive} for covariant functors \cite{calc2}:

\begin{df}\label{n_excisive}
A contravariant functor $\mc R_B^\op \overset{F}\ra \mc T$ is \emph{$n$-excisive} if
\begin{itemize}
\item $F$ is a \emph{homotopy functor}, meaning weak equivalences $X \simar Y$ of spaces containing $B$ as a retract are sent to weak equivalences $F(Y) \simar F(X)$ of based spaces.
\item $F$ takes \emph{strongly co-Cartesian cubes} (equivalently, \emph{pushout cubes}) of dimension at least $n+1$ to \emph{Cartesian cubes} (see \cite{calc2}).
When $n=1$, this means that $F$ takes homotopy pushout squares to homotopy pullback squares.
\item $F$ is \emph{finitary}, meaning that it sends filtered homotopy colimits to homotopy limits.
In particular, $F$ is determined up to equivalence by its behavior on relative finite CW complexes $B \inj X$.
\end{itemize}
\end{df}

This definition is easily modified to suit many cases.
When restricting to finite CW complexes ($\mc R_{B,\fin}$), we drop the last condition.
If $\mc Sp$ denotes a suitable model category of spectra, for example the category of prespectra described in \cite{mmss}, then a contravariant functor $\mc R_B^\op \overset{F}\ra \mc Sp$ is \emph{$n$-excisive} if it satisfies the above properties, with ``equivalence of based spaces'' replaced by ``stable equivalence of spectra.''
For most models of spectra, we may post-compose $F$ with a fibrant replacement functor, and conclude that $F$ is an $n$-excisive functor to spectra iff the functor $F_j$ at each spectrum level is an $n$-excisive functor to based spaces.
It is also straightforward to define \emph{$n$-excisive} for functors from unbased spaces $\mc S_B$ or unbased finite spaces $\mc S_{B,\fin}$ to either spaces $\mc T$ or spectra $\mc Sp$.

If $F$ is a contravariant $n$-excisive functor, then $F$ is completely determined by its values on the discrete spaces with at most $n$ points:

\begin{prop}\label{polys_determined_by_nplusone_points}
\begin{itemize}
\item Let $F$ and $G$ be $n$-excisive functors $\mc R_B^\op \ra \mc T$, and $F \overset\eta\ra G$ a natural transformation.
If $\eta$ is an equivalence when restricted to the subcategory $\mc R_{B,n}^\op$, then $\eta$ is also an equivalence on the rest of $\mc R_B^\op$.
\item If $F$ and $G$ are $n$-excisive functors $\mc S_B^\op \ra \mc T$, and $F \overset\eta\ra G$ is an equivalence on $\mc S_{B,n}^\op$, then $\eta$ is also an equivalence on $\mc S_B^\op$.
\item The obvious analogues hold when the source of $F$ and $G$ is instead the category of finite CW complexes $\mc S_{B,\fin}$ or $\mc R_{B,\fin}$, or when the target is spectra $\mc Sp$ instead of spaces $\mc T$.
\end{itemize}
\end{prop}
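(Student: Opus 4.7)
\emph{Proof proposal.} The plan is a cellular induction that uses the $n$-excision property to reduce an arbitrary object of $\mc R_B$ to the discrete cases $B \amalg \il$ ($0 \leq i \leq n$) covered by the hypothesis. I focus on functors $\mc R_B^\op \to \mc T$; the remaining three variants are formally identical, using only the defining properties of an $n$-excisive functor. The first step is to reduce to finite relative CW complexes: any $X \in \mc R_B$ is a filtered homotopy colimit of its finite subcomplexes $X_\alpha$, and the filtered-colimit clause of Definition \ref{n_excisive} gives $F(X) \simeq \holim_\alpha F(X_\alpha)$ and similarly for $G$. The map $\eta_X$ is the induced map of homotopy limits, so it is enough to work inside $\mc R_{B, \fin}$.

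Next, induct on the number of relative cells of $(X, B)$ in a chosen CW structure. The base case is $X = B \amalg \il$ with $i \leq n$, which lies in $\mc R_{B, n}$ and is handled by hypothesis. For the inductive step, assume $X \notin \mc R_{B, n}$; first subdivide the CW structure if necessary so that $(X, B)$ admits at least $n+1$ top-dimensional cells $e_1, \ldots, e_{n+1}$ (any positive-dimensional cell can be split into arbitrarily many cells of the same dimension, and a discrete space with $m > n$ relative 0-cells already has enough top-dimensional cells). Let $X_\emptyset \subset X$ be the subcomplex obtained by removing the open cells $e_i^\circ$, and form the $(n+1)$-cube
\[ S \mapsto X_S := X_\emptyset \cup \bigcup_{i \in S} e_i, \qquad S \subseteq \{1, \ldots, n+1\}. \]
Because the attaching map of each $e_i$ lies in $X_\emptyset$, every 2-face of this cube is a pushout in $\mc R_B$, so the cube is strongly co-Cartesian. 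By the $n$-excision property, both $F$ and $G$ send it to Cartesian $(n+1)$-cubes in $\mc T$, and $\eta$ induces a map between them. The terminal vertex is $X$, while every other vertex $X_S$ with $|S| \leq n$ has strictly fewer cells than $X$; by induction $\eta_{X_S}$ is an equivalence, and the Cartesian property of both $(n+1)$-cubes then forces $\eta_X$ to be an equivalence.

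The step I expect to require the most care is the subdivision argument — verifying that every $X \in \mc R_{B, \fin} \setminus \mc R_{B, n}$ admits a CW structure with at least $n+1$ top-dimensional cells whose interiors are pairwise disjoint and whose attaching maps land in a common subcomplex. This is essentially CW bookkeeping: one refines a positive-dimensional cell into arbitrarily many same-dimensional cells at the cost of adding lower-dimensional cells for the new boundaries, and the homotopy-invariance clause ensures neither $F$ nor $G$ is affected by such a refinement. Once this is available, the same $(n+1)$-cube argument transports verbatim to the $\mc U_B$, $\mc Sp$-valued, and finite-CW variants, with the reduction to finite CW complexes simply dropped in the last case.
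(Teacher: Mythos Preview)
Your overall strategy---build a strongly co-Cartesian $(n+1)$-cube and use that $F$ and $G$ send it to Cartesian cubes---is correct, and your handling of the $0$-dimensional case and the filtered-colimit reduction are fine. The gap is in the induction for positive-dimensional $X$. You induct on the number of relative cells, but then in the inductive step you \emph{subdivide} $X$ to produce at least $n+1$ top-dimensional cells. Subdivision raises the cell count: if $X$ has $m$ cells and you refine to a structure with $m' > m$ cells, then the proper vertices $X_S$ of your cube have fewer than $m'$ cells, not fewer than $m$, so the inductive hypothesis (known only for complexes with $<m$ cells) does not apply to them. Concretely, take $n=1$ and $X = S^2$ with one $0$-cell and one $2$-cell ($m=2$); any subdivision into two $2$-cells already forces the hemispheres $X_{\{1\}}$ and $X_{\{2\}}$ to carry at least three cells each. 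No lexicographic refinement on (dimension, number of top cells) rescues this either, for the same reason: after subdivision the proper $X_S$ can have more top-dimensional cells than the original $X$.

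The paper repairs this by inducting on \emph{dimension} rather than cell count and replacing subdivision with a ``layer cake'' decomposition. Each $d$-cell $[0,1]^d$ is sliced into $n$ horizontal layers; the $(n+1)$-cube is formed by adding these layers one at a time in $n$ of the coordinate directions, with the remaining direction attaching the $(d-1)$-skeleton $X^{(d-1)}$ along the cell boundaries. The key feature is that every vertex other than the final one is missing at least one layer from \emph{every} $d$-cell (or is not yet glued to $X^{(d-1)}$), and therefore deformation retracts to a $(d-1)$-dimensional complex. The inductive hypothesis on dimension then applies directly to all proper vertices, with no cell-count bookkeeping required.
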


%\begin{warn}
%The analogue of this result for \emph{covariant} functors from spaces to spaces is false.
%The author learned the following counterexample from Sarah Yeakel:
%\[ F(X) = \Omega^\infty(H\Z \sma X), \quad G(X) = \Omega^\infty(H\Z/2 \sma X) \]
%These functors are 1-excisive, and the obvious map $F \ra G$ is an equivalence on $S^0$, but it is not an equivalence on $S^1$.
%\end{warn}

\begin{proof}
We will only prove the first statement, by induction on the dimension of the relative CW complex $B \ra X$.
The key fact is that a map of Cartesian cubes is an equivalence on the initial vertex if it is an equivalence on all the others.

Given a 0-dimensional complex of the form $\uline{m} \amalg B$ with $m \geq n+1$, we may construct a pushout $(n+1)$-cube whose final vertex is $\uline{m} \amalg B$, and all other vertices are of the form $\kl \amalg B$ for varying $k \leq m$.
Applying $F$ and $G$ to this pushout cube gives two Cartesian cubes, and $\eta$ gives a map between the two Cartesian cubes.
Inductively, this map is an equivalence on every vertex but the initial one, so it is an equivalence on the initial vertex as well, giving $F(\uline{m} \amalg B) \simar G(\uline{m} \amalg B)$.

To do higher-dimensional complexes, it is necessary to attach the $d$-dimensional cells to a $(d-1)$-dimensional complex in $(n+1)$ stages, so that only by taking an $(n+1)$-fold pushout do we obtain a $d$-dimensional space.
To accomplish this, we define a \emph{layer cake space} $L^d_T$ for each subset $T \subset \{1,2,\ldots,n\}$.
$L^d_T$ is the subspace of the closed $d$-dimensional unit cube $I^d = [0,1]^d$ consisting of those points whose final coordinate is in the set
\[ \left\{0,\frac1n,\frac2n,\ldots,1\right\} \cup \{t : \lceil nt \rceil \in T\} \]
So $L^d_{\{1,\ldots,n\}}$ is the entire cube, while $L^d_\emptyset$ is homotopy equivalent to $n+1$ copies of $I^{d-1}$ glued along their boundaries.
Intuitively, $L^d_T$ consists of all the frosting in a layer cake, together with a selection of layers given by $T$.
If $T$ is a proper subset, then $L^d_T$ is homotopy equivalent rel $\partial I^d$ to $\partial I^d$ with some $(d-1)$-cells attached.

Now assume that $\eta$ is an equivalence on all finite $(d-1)$-dimensional complexes.
Let $X$ be a $d$-dimensional finite complex, with top-dimensional attaching maps $\{\partial I^d \overset{\varphi_\alpha}\ra X^{(d-1)}\}_{\alpha \in A}$.
Form an $(n+1)$-dimensional pushout cube with the following description.
The initial vertex is $\coprod_A L^d_\emptyset$, a disjoint union of one empty layer cake for each $d$-cell of $X$.
Next, let $n$ of the $n+1$ adjacent vertices be $\coprod_A L^d_{\{i\}}$ as $i$ ranges over $\{1,\ldots,n\}$.
Finally, let the last adjacent vertex be the pushout of $X^{(d-1)}$ and $\coprod_A L^d_\emptyset$ along $\coprod_A \partial I^d$.
Then the final vertex of our pushout cube is homeomorphic to $X$, while every vertex other than the final one is homotopy equivalent to a $(d-1)$-dimensional cell complex.
After applying $F$ and $G$, $\eta$ gives us a map between two Cartesian cubes, and the map is an equivalence on every vertex but the initial one.
So $F(X) \overset\eta\ra G(X)$ is an equivalence as well, completing the induction.

Of course, if the source category of $F$ and $G$ has infinite CW complexes, we express each CW complex as a filtered homotopy colimit of its finite subcomplexes and invoke the colimit axiom.
To move to all spaces, we recall that $F$ and $G$ preserve weak equivalences, and that every space over $B$ has a functorial CW approximation.
\end{proof}

Now suppose $F$ is a contravariant homotopy functor.
We want to define a ``best possible'' approximation of $F$ by an $n$-excisive functor.
By this we mean an $n$-excisive functor $P_n F$ with the same source and target as $F$, and a natural transformation $F \ra P_n F$ that is universal among all maps $F \ra P$ from $F$ into an $n$-excisive functor $P$:
\[ \xymatrix{
F \ar[d] \ar[r] & P \\
P_n F \ar@{-->}[ru]_-{!} & } \]
We will relax this condition to take place in the \emph{homotopy category} of functors.
Following \cite{calc3}, we get this homotopy category by formally inverting the following equivalences:

\begin{df}
An \emph{equivalence} of functors is a natural transformation $F \ra G$ that yields equivalences $F(X) \simar G(X)$ for all spaces $X$.
\end{df}

Unfortunately, this homotopy category of functors has significant set-theory issues.
First of all, the category of all functors from spaces to spaces is not really a category in the usual sense.
This is because when we choose two functors $F$ and $G$, the collection of all natural transformations $F \ra G$ forms a proper class.
In other words, the category of functors has large hom-sets.
The homotopy category of functors has even larger hom-sets \cite{calc3}.

One way of resolving this issue is to restrict to \emph{small} functors as defined in \cite{chorny2006homotopy}.
The small functors form a model category, so their homotopy category has small hom-sets.

We will use a different fix, since we are ultimately interested in a result about compact manifolds.
We will restrict our attention to functors defined on finite CW complexes ($\mc S_{B,\fin}$ or $\mc R_{B,\fin}$) instead of all spaces ($\mc S_B$ or $\mc R_B$).
Finite CW complexes over $B$ can always be embedded into $B \times \R^\infty$, so we can easily make $\mc S_{B,\fin}$ and $\mc R_{B,\fin}$ into small categories.
Then the category of functors from $\mc S_{B,\fin}$ or $\mc R_{B,\fin}$ into spaces or spectra has the projective model structure, as discussed below in section \ref{cells}.
Therefore our homotopy category of functors has small hom-sets.

Now that we are on solid footing, we can return to the problem of finding a universal $n$-excisive approximation $P_n F$ to the homotopy functor $F$.
It turns out that $P_n F$ will actually \emph{agree} with $F$ on the spaces with at most $n$ points.
This is similar to manifold calculus (\cite{weiss1999embeddings}, \cite{goodwillie1999embeddings}) but quite different from the case of covariant functors (\cite{calc3}).
To be more explicit, in Goodwillie calculus one builds a Taylor series, giving a tower of functors that converges to $F$ on highly connected spaces.
By contrast, our theory will build a polynomial interpolation.
We sample our functor $F$ at $(n+1)$ particular homotopy types $\uline{0},\ldots,\uline{n}$ and then we build the unique degree $n$ polynomial $P_n F$ that has the same values on those $(n+1)$ homotopy types.
As a result, our tower of functors will converge to $F$ on low dimensional spaces, as explained in Section \ref{convergence} below.

So let $F$ be any contravariant homotopy functor from finite CW complexes ($\mc R_{B,\fin}$ or $\mc S_{B,\fin}$) to either based spaces or spectra.
(There is one exception to this setup, as explained in section \ref{secondconstruction}.)
In sections \ref{firstconstruction}, \ref{secondconstruction}, and \ref{hack} below we will define a functor $P_n F$ with the same source and target as $F$, and a natural transformation $p_n F: F \ra P_n F$, both functorial in $F$.
Then we will show two things:
\vspace{-6pt}
\begin{itemize}
\item $P_n F$ is $n$-excisive.
\item $F \ra P_n F$ is an equivalence on $\mc R_{B,n}^\op$ (based case) or $\mc S_{B,n}^\op$ (unbased case).
\end{itemize}

\begin{prop}\label{exists}
If $F \ra P_n F$ is a functorial construction satisfying the above properties, then $P_n F$ is universal among all $n$-excisive $P$ with natural transformations $F \ra P$ in the homotopy category of functors.
\end{prop}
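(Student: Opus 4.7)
The goal is to show that precomposition with $p_n F : F \ra P_n F$ induces a bijection
\[ [P_n F, P] \,\cong\, [F, P] \]
in the homotopy category of functors, for every $n$-excisive $P$ with the same source and target as $F$. Both halves---surjectivity (existence of a factorization) and injectivity (uniqueness)---will follow from the preceding proposition applied to $P$ together with its $P_n$-approximation, combined with functoriality and naturality of the construction $(F, p_n F)$.

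\textbf{Existence.} Starting from a natural transformation $\eta: F \ra P$, apply the functorial construction $P_n$ to obtain a commutative square
\[ \xymatrix{ F \ar[r]^-\eta \ar[d]_-{p_n F} & P \ar[d]^-{p_n P} \\ P_n F \ar[r]^-{P_n \eta} & P_n P } \]
expressing the naturality of $p_n$. The right vertical $p_n P : P \ra P_n P$ is a map between $n$-excisive functors (by hypothesis on $P$ and by the first listed property of the construction) which is an equivalence on the subcategory of discrete spaces with at most $n$ points (by the second listed property). The preceding proposition therefore forces $p_n P$ to be an equivalence on the entire source category. Inverting it in the homotopy category of functors yields the morphism $\ti\eta := (p_n P)^{-1} \circ P_n \eta : P_n F \ra P$, which satisfies $\ti\eta \circ p_n F = \eta$ by commutativity of the square.

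\textbf{Uniqueness.} Suppose $\phi, \psi : P_n F \ra P$ are two morphisms in the homotopy category with $\phi \circ p_n F = \psi \circ p_n F$. Since $p_n F$ is an equivalence on each object of $\mc R_{B,n}$, the maps $\phi$ and $\psi$ agree at every such object. Now rerun the induction from the proof of the preceding proposition, but applied to the pair $(\phi, \psi)$ rather than to an equivalence: for each pushout $(n+1)$-cube or layer-cake cube used there, the $n$-excisive functors $P_n F$ and $P$ turn it into a Cartesian cube, and $\phi$ and $\psi$ give two maps between these Cartesian cubes which agree on every non-initial vertex by inductive hypothesis. The initial vertex of a Cartesian cube is the homotopy limit of the others, and two morphisms into a homotopy limit are equal in the homotopy category as soon as their compositions with each leg of the limit cone are equal; hence $\phi$ and $\psi$ also agree at the initial vertex. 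Iterating up through all finite CW dimensions gives $\phi = \psi$ throughout the source.

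\textbf{Main obstacle.} The delicate step is the uniqueness argument, because equality of morphisms in a homotopy category is subtler than equivalence of objects: one must make the principle ``morphisms into a homotopy limit are detected by their projections'' precise. This requires working in a model structure on the functor category---the projective model structure on functors out of the small source categories $\mc R_{B,\fin}$ or $\mc U_{B,\fin}$, as referenced earlier in the section---so that pointwise homotopy limits really are homotopy limits of fibrant diagrams and the detection principle is literally available. The restriction to small source categories in the excerpt is exactly what is needed to make this last step rigorous.
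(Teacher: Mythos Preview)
Your existence argument is correct and is exactly the adaptation of \cite{calc3}, 1.8 that the paper intends: apply the functorial $P_n$ to $\eta$, use the preceding proposition to see that $p_n P$ is an equivalence, and invert it.

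Your uniqueness argument, however, has a real gap. The principle you invoke---that two morphisms into a homotopy limit coincide in the homotopy category once their projections to the legs coincide---is false. Homotopy limits are not limits in the homotopy category; even with the projective model structure in place, the set $[Y,\holim_i Z_i]$ is $\pi_0$ of $\holim_i \Map(Y,Z_i)$, not $\lim_i [Y,Z_i]$. A simple instance: $P(\text{initial vertex})$ could be $\Omega Z$, realized as the homotopy pullback of $*\to Z\leftarrow *$; two maps $Y\to\Omega Z$ can have equal (trivial) projections to both copies of $*$ without being homotopic. So knowing that $\phi$ and $\psi$ agree on all non-initial vertices of your Cartesian cube does not force them to agree on the initial vertex, and the layer-cake induction cannot be rerun for equality of morphisms the way it was for equivalence of objects.

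The argument in \cite{calc3}, 1.8 handles uniqueness differently and avoids this trap. One shows that the assignment $\eta\mapsto (p_n P)^{-1}\circ P_n\eta$ is a two-sided inverse to precomposition with $p_n F$. That it is a right inverse is your existence argument. That it is a left inverse amounts to checking that $P_n(p_n F)$ and $p_n(P_n F)$, as maps $P_n F\to P_n P_n F$, agree in the homotopy category; this is a coherence property of the construction $P_n$ itself (it holds for Goodwillie's $T_n$-colimit construction and for the explicit constructions given later in this paper), not something one deduces from Cartesian cubes. Once that identity is available, any $\phi$ with $\phi\circ p_n F=\eta$ is forced to equal $(p_n P)^{-1}\circ P_n\eta$ by naturality of $p_n$.
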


\begin{proof}
Easy adaptation of (\cite{calc3}, 1.8).
\end{proof}

\begin{cor}[Recognition Principle for $P_n F$]\label{recognition}
Given that such a construction $P_n$ exists, if $P$ is any $n$-excisive functor with a map $F \ra P$ that is an equivalence on $\mc R_{B,n}^\op$ or $\mc S_{B,n}^\op$, then $P$ is canonically equivalent to $P_n F$.
\end{cor}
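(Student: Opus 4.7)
The plan is to deduce this directly from Proposition \ref{exists} together with the identity principle for $n$-excisive functors established just after Definition \ref{n_excisive}.

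First, I would apply Proposition \ref{exists} to the hypothesized map $F \ra P$. Since $P$ is $n$-excisive, universality yields a canonical factorization in the homotopy category of functors,
\[ F \ra P_n F \ra P, \]
whose composite agrees with the given natural transformation $F \ra P$. This canonical map $P_n F \ra P$ is my candidate equivalence.

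Second, I would check that $P_n F \ra P$ restricts to an equivalence on $\mc R_{B,n}^\op$ (or $\mc U_{B,n}^\op$ in the unbased case). By hypothesis the composite $F \ra P$ is an equivalence on $\mc R_{B,n}^\op$, and by the two defining properties of the construction $P_n$ recalled just before Proposition \ref{exists}, the map $F \ra P_n F$ is also an equivalence on $\mc R_{B,n}^\op$. Applying the 2-out-of-3 property to equivalences of spaces (or spectra) pointwise on each object of $\mc R_{B,n}^\op$ forces $P_n F \ra P$ to be an equivalence on $\mc R_{B,n}^\op$ as well. Since $P_n F$ and $P$ are both $n$-excisive, the identity principle (the proposition immediately following Definition \ref{n_excisive}) then upgrades this restricted equivalence to an equivalence on the full source category. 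Canonicity of the resulting equivalence $P_n F \simeq P$ is inherited from Proposition \ref{exists}, since the factorization there is unique in the homotopy category.

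I do not expect a serious obstacle: the argument is a formal consequence of universality plus the identity principle. The only point needing care is the bookkeeping in the homotopy category of functors, namely that ``equivalence on a subcategory'' is detected pointwise and that 2-out-of-3 survives passage to the homotopy category at the pointwise level. Both are immediate once we work in the projective model structure on the functor category discussed earlier in the section.
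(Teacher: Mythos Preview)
Your proof is correct and follows essentially the same route as the paper: use the universal property from Proposition \ref{exists} to obtain the map $P_n F \to P$, observe via 2-out-of-3 that it is an equivalence on $\mc R_{B,n}^\op$ (or $\mc U_{B,n}^\op$), and then invoke the identity principle for $n$-excisive functors to conclude. The paper's proof is simply a terser version of exactly this argument.
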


\begin{proof}
By the universal property of $P_n F$ there exists a unique map $P_n F \ra P$, but this is a map of $n$-excisive functors and an equivalence on $\mc R_{B,n}^\op$ or $\mc S_{B,n}^\op$, so it is an equivalence of functors.
\end{proof}

\begin{rmk}
This recognition argument applies equally well to the case of covariant functors from spaces to spectra.
This is alarming, because in that case $F \ra P_n F$ is usually not an equivalence on the spaces with at most $n$ points.
The only possible conclusion is that, in such a setting, there is no construction $P_n$ satisfying the above properties.
\end{rmk}

We will delay the construction of $P_n F$ to section \ref{firstconstruction}.
In the next section, we will apply this recognition theorem in a particular example.

\section{The tower of approximations of a mapping space}\label{mapping}

Now we will compute the tower of universal $n$-excisive approximations of the functor
\[ F(X) = \Sigma^\infty \Map_B(X,E) \]
from retractive spaces over $B$ to spectra.
The map of Cohen and Jones described in the introduction is the special case $X = M \amalg M$, $B = M$, and $E = LM \amalg M$.
Our results in this section are proven using techniques from model categories, so we will fix some notation for this following \cite{mmss} and \cite{ms}.

\begin{df}
Let $X$ and $Y$ be unbased spaces over $B$, or retractive spaces over $B$.
\begin{itemize}
\item A $q$-cofibration $X \ra Y$ is a retract of a relative cell complex.
\item A finite $q$-cofibration $X \ra Y$ is a retract of a finite relative cell complex.
\item A $q$-fibration $X \ra Y$ is a Serre fibration.
\item An $h$-cofibration $X \ra Y$ is a map of spaces satisfying the homotopy extension property.
\item An $h$-fibration $X \ra Y$ is a Hurewicz fibration.
\end{itemize}
\end{df}

We should also be precise about our definition of $F(X) = \Sigma^\infty \Map_B(X,E)$.

\begin{df}
\begin{itemize}
\item If $E$ is a retractive space over $B$, let $\Sigma_B E$ denote the \emph{fiberwise reduced suspension} of $E$.
\item An \emph{ex-fibration} is a retractive space $E$ over $B$ such that $E \ra B$ is a Hurewicz fibration, and $B \ra E$ is well-behaved in a sense described in (\cite{ms}, 8.2).
For our purposes, the most important property of an ex-fibration $E$ is that the fiberwise reduced suspension $\Sigma_B E$ is again an ex-fibration.
\item If $X$ is a finite $q$-cofibrant retractive space over $B$, and $E$ is an ex-fibration over $B$, let $\Map_B(X,E)$ denote the space of maps $X \ra E$ respecting the maps into and out of $B$.
We may grow a whisker on this space to ensure that it is well-based.
\item Similarly, let $\Map_B(X, \Sigma^\infty_B E)$ denote a spectrum whose $k$th level is fiberwise maps from $X$ into $\Sigma^k_B E$.
\end{itemize}
\end{df}

Now we will build up to the definition of the functors that approximate $F$.
Let $\cat M_n$ be the category whose objects are the finite unbased sets $\uline{0} = \emptyset,$ $\uline{1} = *,$ $\uline{2} = \{1,2\},$ $\ldots,$ $\uline{n} = \{1,\ldots,n\}$ and whose morphisms are the \emph{surjective} maps.
For any space $X$, we can form a diagram of unbased spaces indexed by the opposite category $\cat M_n^\op$:
\[ \il \mapsto X^i = \Map(\il,X) \]
Algebraically, this diagram is the functor represented by $X$.
Geometrically, this is a diagram of generalized diagonal maps: each map $\il \ra \uline{i-1}$ results in an inclusion $X^{i-1} \ra X^i$ whose image consists of those $i$-tuples in which a particular pair of coordinates is repeated.
The union of all such images the \emph{fat diagonal}, which we will denote
\[ \Delta \subset X^i \]

\begin{df}\label{smash}
Let $X$ be a finite $q$-cofibrant retractive space over $B$ and let $E$ be an ex-fibration over $B$.
\begin{itemize}
\item Let $X \barsmash X$ be the \emph{external smash product} of $X$ with itself; this is a retractive space over $B \times B$ whose fiber over $(b_1,b_2)$ is the smash product of the fibers $X_{b_1} \sma X_{b_2}$.
More generally, $X^{\barsmash n}$ is the $n$-fold iterated external smash product, which is a retractive space over $B^n$.
\item Define
\[ \Map_{(\cat M_n^\op,\{B^i\})}(X^{\barsmash i}, \Sigma^\infty_{B^i} E^{\barsmash i}) \]
to be the spectrum whose $k$th level is collections of maps of retractive spaces
\[ \xymatrix{
\Sigma^k * & \Sigma^k_B E & \ldots & \Sigma^k_{B^n} E^{\barsmash n} \\
{*} \ar[u]_-{f_0} & X \ar[u]_-{f_1} & \ldots & X^{\barsmash n} \ar[u]_-{f_n} } \]
such that each surjective map $\il \la \jl$ gives a commuting square
\[ \xymatrix{
\Sigma^k_{B^i} E^{\barsmash i} \ar[r] & \Sigma^k_{B^j} E^{\barsmash j} \\
X^{\barsmash i} \ar[r] \ar[u]_-{f_i} & X^{\barsmash j} \ar[u]_-{f_j} } \]
\end{itemize}
\end{df}

\begin{rmk}
Note that the collection of maps $(f_0,f_1,\ldots,f_n)$ is completely determined by the last map $f_n$, which must be $\Sigma_n$-equivariant.
When $n \geq 3$, not every $\Sigma_n$-equivariant map arises this way.
\end{rmk}

\begin{rmk}
One might expect $S^0 \ra \Sigma^k S^0$ in the place of $* \ra \Sigma^k *$, since an empty smash product is $S^0$.
This answer would give the approximation to the functor $F \vee \Sph$ instead of $F$.
A similar phenomenon happens in Cor. \ref{binomial} below.
\end{rmk}

To summarize, for any unbased space $B$ and ex-fibration $E \ra B$, we have defined a tower of functors on the category $\mc R_{B,\fin}$:
\[ \begin{array}{ccl}
F(X) &=& \Sigma^\infty \Map_B(X,E) \\
\downarrow && \\
\vdots && \vdots  \\
P_n F(X) &=& \Map_{(\cat M_n^\op,\{B^i\})}(X^{\barsmash i}, \Sigma^\infty_{B^i} E^{\barsmash i}) \\
\vdots && \vdots  \\
\downarrow && \\
P_2 F(X) &=& \Map_{B \times B}(X \barsmash X,\Sigma^\infty_{B \times B} E \barsmash E)^{\Sigma_2} \\
\downarrow && \\
P_1 F(X) &=& \Map_B(X,\Sigma^\infty_B E) \\
\downarrow && \\
P_0 F(X) &=& *
\end{array} \]
We will justify the notation with Theorem \ref{main}, which shows that $P_n F(X)$ is the universal $n$-excisive approximation to $F(X)$.
This is a generalization of an observation made by Greg Arone about the tower in \cite{arone_snaith}.

\begin{rmk}
It would also be natural to examine the functor $X \mapsto \Map_B(X,E)$, before applying $\Sigma^\infty$ to it.
This functor is already 1-excisive, so it does not give an interesting tower.
It is also natural to consider
\[ \hat F(X) = \Sigma^\infty \Map_B(X,E) \]
for unbased $X$ over $B$, without a basepoint section.
But $\hat F(X) = F(X \amalg B)$, so $P_n \hat F(X) = P_n F(X \amalg B)$ by comparing universal properties.
\end{rmk}

\subsection{Cell complexes of diagrams}\label{cells}

Many of the proofs that follow rely on the same basic idea: we start with a diagram of spaces or spectra that is built inductively out of cells, and we define maps of diagrams one cell at a time.
In doing so, we are using the following standard facts.
First, both spaces and spectra have cofibrantly generated model structures \cite{mmss}.
Therefore the category of diagrams indexed by $\cat I$ can be endowed with the \emph{projective model structure}.
The weak equivalences $F \ra G$ of diagrams are the maps that give objectwise equivalences $F(i) \simar G(i)$, and the fibrations are the objectwise ($q$-)fibrations.
The projective model structure is again cofibrantly generated.

To understand the cofibrant diagrams, define a functor that takes a based space (or spectrum) $X$ and produces the diagram
\[ F_i(X)(j) = \cat I(i,j)_+ \sma X \]
A map of diagrams $F_i(X) \ra G$ is the same as a map of spaces (or spectra) $X \ra G(i)$.
This property is clearly useful for defining maps of diagrams one cell at a time.
We can define a \emph{diagram cell} by applying $F_i$ to the maps $S^{n-1}_+ \ra D^n_+$, and then define a \emph{diagram cell complex} to be any iterated pushout along coproducts of diagram cells.
Every diagram cell complex is cofibrant; in fact, the cofibrations are just the retracts of the relative cell complexes.

Now we will prove that one particular diagram is cofibrant.
Recall that $\cat M_n$ is the category with one object $\il = \{1,\ldots,i\}$ for each integer $0 \leq i \leq n$, with maps $\il \ra \jl$ the surjective maps of sets.
The maps are not required to preserve ordering, so in particular $\cat M_n(\il,\il) \cong \Sigma_i$, the symmetric group on $i$ letters.

\begin{prop}
If $X$ is a based cell complex, then $\{X^{\sma i}\}_{i=0}^n$ is a cell complex of $\cat M_n^\op$ diagrams.
Similarly for Cartesian products $\{X^i\}$.
If $X$ is $q$-cofibrant then $\{X^{\sma i}\}$ and $\{X^i\}$ are cofibrant diagrams.
\end{prop}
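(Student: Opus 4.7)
My plan is to build the CW structure on the diagram $\{X^{\sma i}\}_{i=0}^n$ by induction on the skeletal filtration of $X$, attaching free diagram cells $F_k(D^d_+)$ in the projective model structure. The base case is trivial: if $X = *$, then every $X^{\sma i}$ is a point and the diagram has no non-basepoint cells. For the inductive step I would suppose that $X_0 \subseteq X$ is a subcomplex for which $\{X_0^{\sma i}\}$ has already been realized as a CW complex of $\cat M_n^\op$-diagrams, and that $X = X_0 \cup_\varphi D^d$ is obtained by attaching a single $d$-cell via $\varphi : S^{d-1} \to X_0$. The task is to exhibit the map of diagrams $\{X_0^{\sma i}\} \to \{X^{\sma i}\}$ as an iterated pushout of diagram cells.

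The guiding geometric observation is that the complement of $X_0^{\sma j}$ in $X^{\sma j}$ is stratified by the number $k$ of the $j$ smash factors that land in the interior of the newly attached $d$-cell. For each $1 \leq k \leq \min(j,n)$, this stratum is built from pieces of the form $(D^d)^{\sma k} \sma X_0^{\sma (j-k)}$, glued along the attaching map $\varphi$ and along the existing CW structure on $\{X_0^{\sma i}\}$. The essential point is that, as $j$ varies, these strata for fixed $k$ assemble into a \emph{free} $\cat M_n^\op$-cell: the inclusion of a $k$-element subset of $\underline{j}$ is precisely the data of a surjection $\uline{j} \twoheadrightarrow \uline{k}$ in $\cat M_n$, which is exactly what is enumerated by $F_k(-)(\uline{j}) = \cat M_n(\uline{j},\uline{k})_+ \sma (-)$. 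So at the $k$-th stage of the inductive attachment we glue in $F_k$ applied to an appropriate disk, with attaching map built from $\varphi$ together with the already-constructed cells of $\{X_0^{\sma i}\}$.

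The step I expect to be the main obstacle is checking that this cell-by-cell attachment is genuinely a pushout of diagram cells compatible with the whole $\cat M_n^\op$-action, i.e.\ that the attachments commute with every generalized diagonal $X^{\sma i} \to X^{\sma j}$. This is a combinatorial bookkeeping problem: one must verify that when a surjection $\uline{j} \twoheadrightarrow \uline{k}$ is composed with a further surjection, the induced identification of strata lines up with the $\cat M_n^\op$-action on the free cell $F_k$. Since each new stratum meets $X_0^{\sma j}$ only along the sphere $\varphi(S^{d-1})$ crossed into $(j-k)$-fold smash products of already-processed cells, and since these intersections are themselves encoded inside $\{X_0^{\sma i}\}$ by the inductive hypothesis, the compatibility should reduce to an essentially formal check on the combinatorics of surjections and partitions.

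For the case of Cartesian products $\{X^i\}$ indexed by $\cat M_n^\op$, the argument is structurally identical after replacing smash powers by products and wedges by disjoint unions; the cells indexing is the same since the combinatorics of surjections is unchanged. Finally, for the $q$-cofibrant case, I would use that every $q$-cofibrant space is a retract of a CW complex, note that the construction $X \mapsto \{X^{\sma i}\}$ is functorial and preserves retracts levelwise, and conclude that the resulting diagram is a retract of a cell diagram, hence cofibrant in the projective model structure by the characterization of cofibrations recalled earlier in this section.
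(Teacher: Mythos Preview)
There is a genuine gap. Your stratification by ``number of factors in the new cell'' is a reasonable opening move, but the claim that the stratum with exactly $k$ new-cell factors assembles into a free cell $F_k$ is not correct, for two related reasons.

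First, the combinatorial identification is wrong: a $k$-element subset of $\uline{j}$ is \emph{not} the same as a surjection $\uline{j}\twoheadrightarrow\uline{k}$. The free cell $F_k(-)(\uline{j})$ has one summand for each surjection, of which there are $k!\,S(j,k)$, not $\binom{j}{k}$. Already at level $\uline{k}$ the free cell $F_k(D)$ contributes $|\Sigma_k|$ disjoint copies of $D$, whereas the stratum ``all $k$ factors in the new cell'' is the single subspace $(D^d)^{\sma k}$. So the strata cannot be free cells in the way you describe.

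Second, and more fundamentally, your induction reduces to the case where $X$ has a single cell $D^d$, and then the entire content of the proposition is the case $j=k$: one must produce a CW structure on $(D^d)^{\sma k}$ in which the fat diagonal is a subcomplex and the cells off the fat diagonal are freely permuted by $\Sigma_k$. The standard product cell structure does not do this: the unique top cell $D^d\times\cdots\times D^d$ meets every generalized diagonal in its interior and is fixed, not freely permuted. Your proposal never engages with this; you treat $(D^d)^{\sma k}$ as if it were already a single admissible cell, which is exactly what needs to be proved. The paper's proof is devoted precisely to this point: it gives an explicit triangulation of $[0,1]^{md}\cong (D^d)^m$ (by ordered partitions of the $md$ matrix entries) for which every generalized diagonal is a union of simplices and the simplices off the fat diagonal are permuted freely by the row action of $\Sigma_m$. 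That triangulation, or something equivalent to it, is the missing ingredient in your argument. Your inductive framework and your handling of the $q$-cofibrant case via retracts are fine, but without the subdivision step the core of the proof is absent.
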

\begin{proof}
It suffices to do the case where $X$ is a cell complex.
We put a new cell complex structure on $X^{\sma n}$ so that the fat diagonal is a subcomplex, and every cell outside of the fat diagonal is permuted freely by the $\Sigma_n$-action.
This reduces quickly to the case where $X$ has a single cell.

The product $\prod^n I^m \cong \prod^n [0,1]^m$ may be identified with the space of all $n \times m$ matrices, with real entries between 0 and 1.
The $\Sigma_n$ action permutes the rows.
Within this space, we define an open simplex of dimension $d$ for each partition of the $nm$ entries of the matrix into $d$ nonempty equivalence classes, along with a choice of total ordering on the equivalence classes.
This simplex corresponds to the subspace of matrices for which the equivalent entries have the same value, and the values are ordered according to the chosen total ordering.

The closures of these simplices give a triangulation of the cube $\prod^n I^m \cong \prod^n [0,1]^m$.
Each generalized diagonal is defined by setting an equivalence relation on the rows of the matrix, and requiring that equivalent rows have the same values.
This is clearly an intersection of conditions we used to define the simplices above, so each generalized diagonal is a union of simplices.
In addition, the simplices off the fat diagonal are freely permuted by the $\Sigma_n$ action.
This finishes the proof.
\end{proof}

\begin{prop}
If $X$ is a based cell complex and $A$ is a subcomplex then $\{A^{\sma i}\} \ra \{X^{\sma i}\}$ is a relative cell complex of $\cat M_n^\op$ diagrams.
If $* \ra A \ra X$ are $q$-cofibrations then $\{A^{\sma i}\} \ra \{X^{\sma i}\}$ is a cofibration of $\cat M_n^\op$ diagrams.
\end{prop}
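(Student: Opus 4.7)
The plan is to bootstrap the argument of the preceding proposition to the relative setting, proceeding by induction on the cells of $X$ lying outside $A$. It suffices to handle a single attachment $X_1 = X_0 \cup_\phi D^d$ where $A \subseteq X_0$, and to show that $\{X_1^{\sma i}\}_{i=0}^n$ is obtained from $\{X_0^{\sma i}\}_{i=0}^n$ by attaching finitely many $\cat M_n^\op$-diagram cells.

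For such a single attachment, decompose $X_1^{\sma i} \setminus X_0^{\sma i}$ according to which nonempty subset $S \subseteq \{1,\ldots,i\}$ of coordinates lies in the interior of the new cell, and (for the coordinates outside $S$) which open cell of $X_0$ they occupy. On each such piece, apply the matrix-entry simplicial decomposition from the preceding proof. These local triangulations agree on common boundaries because the subdivision rule is determined solely by the ordering and equality relations among matrix entries, which extend continuously across cell boundaries. The resulting simplices are freely permuted by their stabilizers in $\Sigma_i$, and following the translation outlined after the preceding proposition, the orbits of simplices along the stratum where exactly $k$ coordinates are genuinely distinct organize into $\cat M_n^\op$-diagram cells at level $k$. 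Since every newly introduced simplex involves the interior of the attached cell in at least one coordinate, none lies in $A^{\sma i}$; thus $\{A^{\sma i}\}$ remains a subcomplex after the attachment, closing the induction and establishing the first statement.

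For the cofibration statement, invoke the standard retract argument. Any $q$-cofibration $A \ra X$ is a retract of some relative cell complex inclusion $A \ra X'$, and the hypothesis that $* \ra A$ is itself a $q$-cofibration guarantees that $A$ is well-based, so iterated smash products are homotopically well-behaved and commute appropriately with the retract. By the first statement, $\{A^{\sma i}\} \ra \{X'^{\sma i}\}$ is a relative CW inclusion of $\cat M_n^\op$-diagrams, hence a cofibration in the projective model structure. Since projective cofibrations are closed under retracts, $\{A^{\sma i}\} \ra \{X^{\sma i}\}$ is itself a projective cofibration.

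The main obstacle is the careful verification in the second paragraph that the matrix-entry triangulation really does organize into a genuine $\cat M_n^\op$-diagram cell structure, so that orbits of simplices along the depth-$k$ stratum of the fat diagonal are indexed by surjections $\il \twoheadrightarrow \kl$ in a fashion consistent with the morphisms of the diagram. The preceding proposition leaves this translation only sketched, relegating the multi-cell case to a remark, so ensuring that the structure still works in the presence of a subcomplex requires some combinatorial bookkeeping, but no fundamentally new ideas beyond those already used in the absolute case.
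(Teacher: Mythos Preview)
Your first paragraph is essentially the paper's argument, organized as an induction on cells of $X$ outside $A$; the paper states it more tersely (``subdivide each product cell outside $A^{\sma i}$ so that $\Delta \cup A^{\sma i}$ is a subcomplex, and observe the $\Sigma_i$-action is free off the fat diagonal''), but the content is the same.

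There is a genuine gap in your second paragraph. You replace the $q$-cofibration $A \ra X$ by a relative cell complex $A \ra X'$ and then invoke the first statement to conclude that $\{A^{\sma i}\} \ra \{X'^{\,\sma i}\}$ is a relative CW complex of diagrams. But the first statement, as you proved it, requires $X'$ to be a based CW complex with $A$ a subcomplex: your inductive step explicitly decomposes according to \emph{which open cell of $X_0$} the other coordinates occupy, so it presupposes a cell structure on $A$. If $A$ is merely $q$-cofibrant (a retract of a CW complex) then $X'$ need not be a CW complex at all, and the first statement does not apply. The observation that $A$ is well-based is irrelevant here; well-basedness controls homotopical behavior of smash products, not the existence of cell structures.

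The paper repairs this by performing a second replacement: factor $* \ra A$ through a cell complex $* \ra A'$ with $A$ a retract of $A'$, and push out to obtain the chain $* \ra A' \ra X' \cup_A A'$ of genuine relative cell complexes, which contains $* \ra A \ra X$ as a retract. Now the first statement applies to the pair $(A', X' \cup_A A')$, and the functor $Y \mapsto \{Y^{\sma i}\}$ carries the retract of pairs to a retract of maps of diagrams, yielding the cofibration. You should add this second replacement; without it the appeal to the first statement is unjustified.
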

\begin{proof}
Each cell of $X^{\sma i}$ lying outside $A^{\sma i}$ is a product of cells in $X$, at least one of which is not a cell in $A$.
As above, we subdivide each of these cells so that $\Delta \cup A^{\sma i}$ is a subcomplex when $\Delta$ is any of the generalized diagonals.
Off the fat diagonal, the $\Sigma_i$ action still freely permutes the cells.
This gives the recipe for building the map of diagrams $\{A^{\sma i}\} \ra \{X^{\sma i}\}$ out of free cells of diagrams.

Suppose that $* \ra A \ra X$ are $q$-cofibrations, and we want to show that $\{A^{\sma i}\} \ra \{X^{\sma i}\}$ is a cofibration of diagrams.
Then without loss of generality we can replace $A \ra X$ by a relative cell complex $A \ra X'$.
Then we can replace $* \ra A$ by a relative cell complex $* \ra A'$, and we get the sequence of relative cell complexes $* \ra A' \ra X' \cup_A A'$ containing $* \ra A \ra X$ as a retract.
Then we apply the same argument as above.
\end{proof}

\begin{prop}\label{diagonalcells}
If $X$ is a retractive cell complex over $B$ then $\{B^i\} \ra \{X^{\barsmash i}\}$ is a relative cell complex of $\cat M_n^\op$ diagrams.
If $B \ra A \ra X$ are $q$-cofibrations over $B$ then $\{A^{\barsmash i}\} \ra \{X^{\barsmash i}\}$ is a cofibration of $\cat M_n^\op$ diagrams.
\end{prop}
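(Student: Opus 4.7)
The plan is to adapt the argument of the previous two propositions to the fiberwise setting. The retractive external smash $X^{\barsmash i}$ is the quotient of $X^i$ by the subspace of tuples having at least one coordinate in the section $B \hookrightarrow X$. A relative CW structure on $(X,B)$ presents $X$ as an iterated attachment of cells lying outside the section, so $X^{\barsmash i}$ is built from $B^i$ by attaching product cells $D^{m_1} \times \cdots \times D^{m_i}$ indexed by $i$-tuples of cells of $X$ not in $B$ --- these are exactly the cells of $X^i$ that survive the collapse.

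First I would apply to each such product cell the same partition/ordering simplicial subdivision used in the previous proposition: a $d$-simplex corresponds to a partition of the $m_1 + \cdots + m_i$ matrix entries into $d$ nonempty equivalence classes, together with a total ordering of the classes. Exactly as before, every generalized diagonal is cut out by imposing equality of certain rows, which is an intersection of the conditions defining the simplices, so the fat diagonal $\Delta \subset X^{\barsmash i}$ is a subcomplex, and off $\Delta$ the simplices are freely permuted by $\Sigma_i$. From this refinement one reads off the $\cat M_n^\op$-diagram CW structure on $\{B^i\} \ra \{X^{\barsmash i}\}$ just as in the based case: each free $\Sigma_j$-orbit of cells at the deepest fat-diagonal stratum it belongs to contributes a diagram cell of the form $F_{\jl}$ applied to $(D^m_+, S^{m-1}_+)$, attached to the previously built diagram via the maps induced by surjections $\jl \la \kl$. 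For the second statement, the same argument applies after replacing $B \ra A \ra X$ by a filtered relative cell complex $B \ra A' \ra X' \cup_A A'$ containing it as a retract, and observing that retracts of diagram cofibrations remain diagram cofibrations.

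The main obstacle I anticipate is purely bookkeeping: one must verify that the partition/ordering subdivisions, chosen independently on each product cell, assemble compatibly with the diagonal maps $X^{\barsmash j} \ra X^{\barsmash i}$ arising from the $\cat M_n^\op$-diagram structure, so that the attaching map of a free $\Sigma_j$-orbit at stratum $j$ factors through the portion of the diagram already built at objects $\kl$ with $k<j$. This reduces to checking that both the sections $B^j \hookrightarrow X^j$ and the images of the generalized diagonals $X^{\barsmash j} \ra X^{\barsmash i}$ are subcomplexes of the refined triangulation --- which holds by the very choice of the subdivision --- after which the remaining verifications are formal.
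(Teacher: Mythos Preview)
Your proposal is correct and follows essentially the same route as the paper: the paper's proof is very terse, saying only that one must first check $B^i \hookrightarrow X^{\barsmash i}$ is a relative cell complex with one cell per $i$-tuple of relative cells of $(X,B)$, after which ``the rest of the proof follows as above'' --- and what you have written is precisely an expansion of that phrase, invoking the same partition/ordering subdivision and the same retract trick for the cofibration statement. The one point the paper flags that you do not is a point-set caveat: the verification that $B^i \hookrightarrow X^{\barsmash i}$ really is a relative cell complex (so that product cells in $X^i$ survive the fiberwise collapse cleanly) relies on working in compactly generated weak Hausdorff spaces, and can fail otherwise.
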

\begin{proof}
We must verify that $B^i \inj X^{\barsmash i}$ is a relative cell complex with one cell for each $i$-tuple of relative cells of $B \inj X$.
This is a straightforward adaptation of standard arguments, but it is worth pointing out that these arguments derail if we don't work in the category of compactly generated weak Hausdorff spaces.
Once we are assured that everything is a cell complex, the rest of the proof follows as above.
\end{proof}

Recall that an \emph{acyclic cofibration} (of spaces, spectra, or diagrams) is a map that is both a cofibration and a weak equivalence.

\begin{cor}
If $* \ra A \ra X$ are $q$-cofibrations and $A \ra X$ is acyclic then $\{A^{\sma i}\} \ra \{X^{\sma i}\}$ is an acyclic cofibration of $\cat M_n^\op$ diagrams.
Similarly for Cartesian products.
\end{cor}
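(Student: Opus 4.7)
My plan is to separate the statement into its two ingredients. First, the preceding proposition already tells us that $\{A^{\sma i}\} \ra \{X^{\sma i}\}$ is a cofibration of $\cat M_n^\op$ diagrams. Since a weak equivalence in the projective model structure on diagrams is exactly an objectwise weak equivalence, what remains is to show that, for each $0 \leq i \leq n$, the map $A^{\sma i} \ra X^{\sma i}$ is a weak equivalence of based spaces.

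For this I would factor the map as a telescope
\[ A^{\sma i} \ra X \sma A^{\sma (i-1)} \ra X^{\sma 2} \sma A^{\sma (i-2)} \ra \cdots \ra X^{\sma i}, \]
in which the $k$-th arrow is $\id \sma \cdots \sma (A \ra X) \sma \cdots \sma \id$, smashed on either side with cofibrant factors ($A$ or $X$). Since $A \ra X$ is an acyclic $q$-cofibration of cofibrant based spaces, the pushout-product axiom for the Quillen model structure on compactly generated weak Hausdorff based spaces implies that each factor in the telescope is itself an acyclic $q$-cofibration. Their composition is therefore a weak equivalence, which, together with the cofibration assertion from the previous proposition, yields the corollary. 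The Cartesian product case is identical, with $\sma$ replaced by $\times$ and the ambient model category by unbased spaces, where $\times$ is again a left Quillen bifunctor.

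An alternative more in keeping with the style of the preceding propositions would be to realize $A \ra X$ as a retract of a relative complex built from acyclic cells $D^n \times \{0\} \inj D^n \times I$, observe that a product of ordinary cells of $X$ with at least one such acyclic cell is again an acyclic cell (since a product of a disk with a cylinder on a disk is again a cylinder on a disk), apply the fat-diagonal subdivision used in the previous propositions, and thereby realize $\{A^{\sma i}\} \ra \{X^{\sma i}\}$ as a retract of a relative complex of acyclic diagram cells. The main point either way -- and the one place where a little care is required -- is the bookkeeping needed to pass from the space-level pushout-product (or cell) argument to the diagram-level conclusion; but given the cofibration statement from the previous proposition this is essentially immediate.
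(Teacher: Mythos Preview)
Your proposal is correct and follows essentially the same approach as the paper: the cofibration part is inherited from the preceding proposition, and the acyclicity reduces to checking that each $A^{\sma i} \ra X^{\sma i}$ is a weak equivalence. The paper's proof is terser than yours---it simply notes that $q$-cofibrant based spaces are well-based, so smash powers preserve weak equivalences---whereas you unpack this via the telescope factorization and the pushout-product axiom; but the underlying content is the same.
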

\begin{proof}
Since $A$ and $X$ are $q$-cofibrant they are well-based, meaning $* \ra A$ is an $h$-cofibration.
Therefore since $A \ra X$ is a weak equivalence, $A^{\sma i} \ra X^{\sma i}$ is a weak equivalence as well.
\end{proof}

\begin{cor}\label{diagonalacycliccells}
Each acyclic cofibration $A \ra X$ of $q$-cofibrant retractive spaces over $B$ induces a acyclic cofibration of $\cat M_n^\op$ diagrams $\{A^{\barsmash i}\} \ra \{X^{\barsmash i}\}$.
\end{cor}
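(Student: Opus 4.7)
The plan is to combine Proposition \ref{diagonalcells} with a check that $\{A^{\barsmash i}\} \ra \{X^{\barsmash i}\}$ is an objectwise weak equivalence. Since the weak equivalences in the projective model structure on $\cat M_n^\op$-diagrams are precisely the objectwise ones, and the cofibration half is already handled by Proposition \ref{diagonalcells}, the corollary reduces to showing that for each $0 \leq i \leq n$, the induced map $A^{\barsmash i} \ra X^{\barsmash i}$ over $B^i$ is a weak equivalence of underlying spaces.

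To establish this, I would factor $A^{\barsmash i} \ra X^{\barsmash i}$ as the composite
\[ A^{\barsmash i} \ra A^{\barsmash i-1} \barsmash X \ra \ldots \ra A \barsmash X^{\barsmash i-1} \ra X^{\barsmash i} \]
in which each stage replaces a single $A$-factor with $X$. A typical stage has the form $Z \barsmash A \barsmash W \ra Z \barsmash X \barsmash W$ for $Z$ a $q$-cofibrant retractive space over $B^a$ and $W$ a $q$-cofibrant retractive space over $B^b$, with $a + b + 1 = i$. Because $Z$ and $W$ are $q$-cofibrant, their section inclusions $B^a \ra Z$ and $B^b \ra W$ are $h$-cofibrations, and so are those of $A$ and $X$. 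The claim thus reduces to showing that external smashing with well-sectioned $q$-cofibrant objects preserves weak equivalences between well-sectioned $q$-cofibrant objects.

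For that last point, the standard approach is the gluing lemma for $h$-cofibrations applied to the defining pushout of the external smash product (which collapses the $h$-cofibrant section of the middle factor to the base). Because $B \ra A \ra X$ are $q$-cofibrations and hence $h$-cofibrations, the gluing lemma propagates the acyclicity of $A \ra X$ through the pushout, showing each stage is an acyclic cofibration. Iterating $i$ times gives that $A^{\barsmash i} \ra X^{\barsmash i}$ is a weak equivalence, which, combined with the cofibration statement of Proposition \ref{diagonalcells}, proves the corollary.

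The main obstacle is bookkeeping in the parametrized setting: one must verify that the intermediate spaces $A^{\barsmash j} \barsmash X^{\barsmash i-j}$ remain $q$-cofibrant retractive spaces over $B^i$ (so that the inductive hypothesis applies at each stage), and that external smash interacts with $h$-cofibrations as expected. Both are standard facts in parametrized homotopy theory but merit a citation to \cite{ms}; everything else is a direct application of Proposition \ref{diagonalcells} together with the gluing lemma.
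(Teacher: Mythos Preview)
Your proposal is correct and follows essentially the same strategy as the paper: reduce to checking that $A^{\barsmash i}\ra X^{\barsmash i}$ is a weak equivalence of total spaces, and then use that the external smash product is a pushout along an $h$-cofibration so that the gluing lemma applies. The only organizational difference is that you factor the map through the intermediate stages $A^{\barsmash j}\barsmash X^{\barsmash(i-j)}$, whereas the paper argues directly by induction on $i$, writing $A^{\barsmash i}\cong A\barsmash A^{\barsmash(i-1)}$ and comparing the homotopy pushout $H_A$ to the strict pushout; both routes amount to the same invocation of the gluing lemma.
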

\begin{proof}
Again, we just need to show that $A^{\barsmash i} \ra X^{\barsmash i}$ is a weak equivalence of total spaces.
Suppose that $i = 2$.
Let $H_A$ be the homotopy pushout of
\[ \xymatrix{
B \times B & \\
(A \times B) \cup_{B \times B} (B \times A) \ar[u] \ar[r] & A \times A } \]
Then $H_A$ is equivalent to the strict pushout $A \barsmash A$, because the bottom map is an $h$-cofibration.
This gives a square
\[ \xymatrix{
H_A \ar[r]^-\sim \ar[d]^-\sim & H_X \ar[d]^-\sim \\
A \barsmash A \ar[r] & X \barsmash X } \]
from which we see that the bottom map is an equivalence.
When $i > 2$, simply replace one of the two copies of $A$ by the space $A^{\barsmash (i-1)}$.
\end{proof}

\subsection{Proof that the tower is correct}\label{correct}

As in Def. \ref{smash} above, let $X$ be a finite $q$-cofibrant retractive space over $B$, and let $E$ be an ex-fibration over $B$.

\begin{prop}
$F(X) = \Sigma^\infty \Map_B(X,E)$ is a homotopy functor on the category of $q$-cofibrant retractive spaces over $B$.
\end{prop}
\begin{proof}
We prove $F(X)$ takes weak equivalences between $q$-cofibrant retractive spaces over $B$ to level equivalences of spectra.
Since all of our mapping spaces are well-based, it suffices to prove this fact for the functor $\Map_B(X,E)$.
By Ken Brown's lemma, it suffices to take an acyclic $q$-cofibration $X \ra Y$ and show that
\[ \Map_B(Y,E) \ra \Map_B(X,E) \]
is a weak equivalence.
So take the square of based spaces
\[ \xymatrix{
S^{n-1}_+ \ar[r] \ar[d] & \Map_B(Y,E) \ar[d] \\
D^n_+ \ar[r] \ar@{-->}[ur] & \Map_B(X,E) } \]
The right-hand vertical map is a weak equivalence if we can show the desired lift always exists.
This is equivalent to finding a lift in the square
\[ \xymatrix{
(S^{n-1} \times Y) \cup (D^n \times X) \ar[r] \ar[d] & E \ar[d] \\
D^n \times Y \ar[r] \ar@{-->}[ur] & B } \]
Since $E \ra B$ is a $q$-fibration, it suffices to show that the left-hand vertical map is an acyclic $q$-cofibration.
This is the main axiom for checking that (compactly generated) spaces form a monoidal model category.
It is proven by a standard argument found in \cite{hovey1999model}, so we are done.
Alternatively, the homotopy invariance of mapping spaces from cofibrant objects to fibrant objects could also be deduced from the results of Dwyer and Kan on hammock localization \cite{dwyerkanfunction}.
\end{proof}

\begin{prop}
$P_n F(X) = \Map_{(\cat M_n^\op,\{B^i\})}(X^{\barsmash i}, \Sigma^\infty_{B^i} E^{\barsmash i})$ as in Def \ref{smash} is also a homotopy functor on the category of $q$-cofibrant retractive spaces over $B$.
\end{prop}
\begin{proof}
Again, by Ken Brown's lemma it suffices to take an acyclic $q$-cofibration $X \ra Y$ and show that
\[ \Map_{(\cat M_n^\op,\{B^i\})}(Y^{\barsmash i}, \Sigma^\infty_{B^i} E^{\barsmash i})
\ra \Map_{(\cat M_n^\op,\{B^i\})}(X^{\barsmash i}, \Sigma^\infty_{B^i} E^{\barsmash i}) \]
is a level equivalence of spectra.
So we take any square of spaces
\[ \xymatrix{
S^{n-1}_+ \ar[r] \ar[d] & \Map_{(\cat M_n^\op,\{B^i\})}(Y^{\barsmash i}, \Sigma^k_{B^i} E^{\barsmash i}) \ar[d] \\
D^n_+ \ar[r] \ar@{-->}[ur] & \Map_{(\cat M_n^\op,\{B^i\})}(X^{\barsmash i}, \Sigma^k_{B^i} E^{\barsmash i}) } \]
and show the dotted diagonal map exists.
This is equivalent to a lift in this square of diagrams indexed by $\cat M_n^\op$:
\[ \xymatrix{
(S^{n-1}_+ \barsmash Y^{\barsmash i}) \cup (D^n_+ \barsmash X^{\barsmash i}) \ar[r] \ar[d] & \Sigma^k_{B^i} E^{\barsmash i} \ar[d] \\
D^n_+ \barsmash Y^{\barsmash i} \ar[r] \ar@{-->}[ur] & B^i } \]
Since we assumed that $E \ra B$ was an ex-fibration, the right-hand vertical map is an ex-fibration as well (\cite{ms}, 8.2.4).
Therefore it is also a $q$-fibration.
So it suffices to show the left-hand vertical map is an acyclic cofibration of diagrams.
By standard techniques, this reduces to showing that
\[ X^{\barsmash i} \ra Y^{\barsmash i} \]
is an acyclic cofibration of diagrams, which we proved in Prop. \ref{diagonalacycliccells} above.
\end{proof}

\begin{prop}
The natural map $F \ra P_n F$ is an equivalence on any retractive space of the form $\il \amalg B$ when $0 \leq i \leq n$.
\end{prop}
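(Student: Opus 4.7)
The plan is to evaluate $F(\underline{i} \amalg B)$ and $P_n F(\underline{i} \amalg B)$ directly and then identify the comparison map with the stable splitting of $\Sigma^\infty$ of a finite product. By homotopy invariance (and Prop. \ref{diagonalcells} / Cor. \ref{diagonalacycliccells}), we may arrange the $i$ points of $\underline{i}$ to project to distinct points $b_1,\ldots,b_i \in B$, so the fiber of $X = \underline{i} \amalg B$ over $b \in B$ is $S^0$ when $b \in \{b_1, \ldots, b_i\}$ and is the section point otherwise. Then $\Map_B(\underline{i} \amalg B, E)$ is homeomorphic to the well-based product $\prod_{\alpha=1}^i E_{b_\alpha}$ (the fibers are well-pointed because $E$ is an ex-fibration), and the standard stable splitting of a finite product of well-based spaces yields
\[ F(\underline{i} \amalg B) \simeq \Sigma^\infty \prod_{\alpha=1}^i E_{b_\alpha} \simeq \bigvee_{S \subset \underline{i}} \Sigma^\infty \bigwedge_{\alpha \in S} E_{b_\alpha}, \]
with the convention that the $S = \emptyset$ summand is a point.

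For $P_n F$, the fiber of $X^{\barsmash j}$ over $(c_1,\ldots,c_j) \in B^j$ is $S^0$ exactly when $c_k = b_{f(k)}$ for some function $f: \underline{j} \to \underline{i}$, and is the basepoint otherwise. A compatible family of fiberwise maps on the $\cat M_n^\op$-diagram $\{X^{\barsmash j}\}_{j=0}^n$ therefore amounts to, for each $j \leq n$ and each $f: \underline{j} \to \underline{i}$, an element $v^{(j)}_f \in \Sigma^k(E_{b_{f(1)}} \wedge \cdots \wedge E_{b_{f(j)}})$, subject to $v^{(j)}_{g \circ \sigma} = \sigma_* v^{(l)}_g$ for every surjection $\sigma: \underline{j} \twoheadrightarrow \underline{l}$ in $\cat M_n$. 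Each $f$ factors as an injection $\iota: \underline{l} \hookrightarrow \underline{i}$ precomposed with such a surjection; since $l \leq j \leq n$ the surjection lies in $\cat M_n$, and since $l \leq i \leq n$ every subset of $\underline{i}$ is realized by some such injection. The residual $\Sigma_l$-equivariance under automorphisms $\tau \in \Sigma_l$ of $\underline{l}$ identifies injections with common image, leaving exactly one free value per unordered subset $S \subset \underline{i}$, giving
\[ (P_n F(\underline{i} \amalg B))_k \cong \prod_{S \subset \underline{i}} \Sigma^k \bigwedge_{\alpha \in S} E_{b_\alpha}, \qquad P_n F(\underline{i} \amalg B) \simeq \bigvee_{S \subset \underline{i}} \Sigma^\infty \bigwedge_{\alpha \in S} E_{b_\alpha}. \]

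Finally, trace the natural transformation $F \to P_n F$. A map $\phi: \underline{i} \amalg B \to E$ of retractive spaces with coordinates $(e_1,\ldots,e_i)$, $e_\alpha \in E_{b_\alpha}$, is sent to the family $(\phi^{\barsmash j})_{j \leq n}$; under the identifications above this assigns to each injection $\iota$ the value $v_\iota = \bigwedge_{\alpha \in \mathrm{im}(\iota)} e_\alpha$, which is exactly the suspension splitting map $\Sigma^\infty \prod_\alpha E_{b_\alpha} \simeq \bigvee_S \Sigma^\infty \bigwedge_{\alpha \in S} E_{b_\alpha}$ and hence a stable equivalence. The main obstacle is the combinatorial bookkeeping in the middle paragraph, namely that the factorization-into-injection argument together with $\Sigma_l$-equivariance genuinely collapses the data to one value per subset and that $i \leq n$ is the exact hypothesis needed to capture all subsets; once that is in hand, the rest reduces to the standard stable splitting of $\Sigma^\infty$ of a finite product.
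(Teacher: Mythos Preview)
Your proof is correct and follows essentially the same route as the paper's. Both arguments unwind the data of a map of $\cat M_n^\op$-diagrams $\{X^{\barsmash j}\} \to \{\Sigma^k_{B^j} E^{\barsmash j}\}$ at $X = \il \amalg B$, reduce it to one free value per nonempty subset $S \subset \il$ lying in $\Sigma^k \bigwedge_{\alpha \in S} E_{b_\alpha}$, and then identify the comparison map with the stable splitting of Cor.~\ref{binomial}. The paper compresses your surjection--injection factorization into the single observation that the fat diagonal covers all of $X^{\barsmash j}$ for $j > i$, so everything is determined at level $i$; your more explicit bookkeeping is the same combinatorics spelled out. One small remark: the opening move of arranging the $b_\alpha$ to be distinct is unnecessary (and not always possible, e.g.\ when $B$ has fewer than $i$ points); the identifications $\Map_B(\il \amalg B, E) \cong \prod_\alpha E_{b_\alpha}$ and $(\il \amalg B)^{\barsmash j} \cong \il^j \amalg B^j$ hold regardless of whether the images in $B$ coincide, so you can simply drop that sentence.
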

\begin{proof}
When $X = \il \amalg B$, the fat diagonal covers all of $X^{\barsmash j}$ for $j > i$.
Therefore a natural transformation of $\cat M_n^\op$-diagrams is determined by what it does on $X^{\barsmash i} = \il^i \amalg B^i$.
This is an $i^i$-tuple of points in various fibers of $\Sigma^\infty_{B^i} E^{\barsmash i}$, with compatibility conditions.
The compatibility conditions force us to have only one point for each nonempty subset of $\il$.
Therefore the map $F(\il \amalg B) \ra P_n F(\il \amalg B)$ becomes
\[ \Sigma^\infty (E_{b_1} \times \ldots \times E_{b_i})
\ra \prod_{S \subset \underline i,\, S \neq \emptyset} \Sigma^\infty \bigwedge_{s \in S} E_{b_s} \]
From Cor. \ref{binomial} below, this map is always an equivalence.
\end{proof}

\begin{prop}
$P_n F$ is $n$-excisive.
\end{prop}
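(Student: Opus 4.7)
The plan is to verify the three clauses of Definition \ref{n_excisive}. The homotopy functor clause is the content of the preceding proposition, so only two things remain to check: that $P_n F$ sends strongly co-Cartesian $(n+1)$-cubes to Cartesian cubes, and that it converts filtered homotopy colimits into homotopy limits.

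The heart of the argument is the first clause. Let $\mathcal X \colon \mc P(\uline{n+1}) \ra \mc R_B$ be a strongly co-Cartesian $(n+1)$-cube of $q$-cofibrant retractive spaces, which I may assume is built from $q$-cofibrations along the $n+1$ edges out of $X_\emptyset$, so that $X_S = X_\emptyset \cup \bigcup_{s \in S} A_s$. The claim to establish is that, for each $0 \leq i \leq n$, the induced $(n+1)$-cube $S \mapsto \{X_S^{\barsmash i}\}$ of $\cat M_n^\op$-diagrams is co-Cartesian, i.e.\ its terminal vertex is the homotopy colimit of the punctured subcube indexed by $S \subsetneq \uline{n+1}$. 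The reason is combinatorial: distributing $\barsmash$ over the pushouts decomposes $X_{\uline{n+1}}^{\barsmash i}$ into pieces indexed by functions $f \colon \uline i \ra \uline{n+1}$, and each such piece uses only the factors $A_{f(1)},\ldots,A_{f(i)}$ and thus lies inside $X_{\mathrm{im}(f)}^{\barsmash i}$. Since $i \leq n < n+1$, every such $f$ has $\mathrm{im}(f) \subsetneq \uline{n+1}$, so $X_{\uline{n+1}}^{\barsmash i} = \colim_{T \subsetneq \uline{n+1}} X_T^{\barsmash i}$ as a strict pushout. The cell-complex structure of Proposition \ref{diagonalcells} upgrades this strict colimit to a homotopy colimit.

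Once the cube of $\cat M_n^\op$-diagrams $\{X_S^{\barsmash i}\}_S$ is co-Cartesian with cofibrations for edges, and the target $\{\Sigma^\infty_{B^i} E^{\barsmash i}\}$ is an ex-fibration (hence levelwise fibrant), I want to conclude that applying $\Map_{(\cat M_n^\op,\{B^i\})}(-,\Sigma^\infty_{B^i} E^{\barsmash i})$ vertex-by-vertex yields a Cartesian $(n+1)$-cube of spectra. This is the standard fact that $\Map(-,Y)$ from cofibrant into fibrant converts homotopy colimit cubes into homotopy limit cubes; concretely, it is the same cells-and-lifting argument used in the preceding proposition, now with the inclusion of the boundary of the cube of diagrams playing the role of $S^{n-1}_+ \inj D^n_+$, and with cofibrancy of the boundary inclusion guaranteed by Proposition \ref{diagonalcells}.

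For the filtered-colimit clause, a filtered homotopy colimit $X = \hocolim_\alpha X_\alpha$ of $q$-cofibrant retractive CW complexes is, up to equivalence, a strict filtered colimit along cofibrations. External smash product commutes with filtered colimits along cofibrations in each variable, so $\{X^{\barsmash i}\}$ is the filtered colimit of the cofibrant $\cat M_n^\op$-diagrams $\{X_\alpha^{\barsmash i}\}$, and mapping into the fibrant target converts this colimit into the desired homotopy limit of $P_n F(X_\alpha)$. The main obstacle is the bookkeeping in the combinatorial step: making precise that the multinomial decomposition of $X_{\uline{n+1}}^{\barsmash i}$ lies entirely in the punctured subcube when $i \leq n$, and that this identification is a homotopy colimit statement. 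After that pigeonhole observation, the remaining passages to Cartesian cubes and homotopy limits are routine applications of techniques already established.
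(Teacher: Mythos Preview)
Your argument is correct and uses the same two key ingredients as the paper: the pigeonhole observation that $i \leq n < |S|$ forces every $i$-tuple of points in $\bigcup_S X_s$ to land inside some $\bigcup_T X_t$ with $T \subsetneq S$, together with the cell-complex structure of Proposition~\ref{diagonalcells} to handle lifting against the ex-fibration target. The paper orders the steps slightly differently---it first verifies by an explicit cell-by-cell lifting argument that the image cube of spaces is a \emph{fibration cube}, and only then invokes the pigeonhole to conclude that the map from the initial vertex to the strict limit of the punctured cube is a homeomorphism---whereas you first use the pigeonhole to show the source cube of $\cat M_n^\op$-diagrams is co-Cartesian and then appeal to the general principle that mapping from cofibrant diagrams into a fibrant target converts this to a Cartesian cube. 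These are the same argument with the steps reorganized. One small point: the functor $P_nF$ here is defined on $\mc R_{B,\fin}$, so by the remark following Definition~\ref{n_excisive} the filtered-colimit clause is dropped, and your third paragraph is not needed.
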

\begin{proof}
Let $S$ be a finite set with $|S| \geq n+1$, and take any strongly co-Cartesian cube of retractive spaces over $B$ indexed by the subsets of $S$.
This cube is equivalent to a cube of pushouts along relative CW complexes
\[ A \ra X_s \qquad s \in S \]
of retractive spaces over $B$.
Applying $P_n F$, we get a cube of spectra which sends each subset $T \subseteq S$ to the spectrum
\[ P_n F\left(\bigcup_{t \in T} X_t \right) \]
Let's show that this cube is level Cartesian.
This will be enough to prove that the cube is actually Cartesian, since finite homotopy limits commute with fibrant replacement of spectra.

Fix a nonnegative integer $k$ and restrict attention to level $k$ of the spectra in the cube.
This turns out to be a \emph{fibration cube} as defined in \cite{calc2}.
To prove this, we have to construct this lift for any space $K$ and any subset $S' \subseteq S$:
\[ \xymatrix{
K \ar[r] \ar[d] & \Map_{(\cat M_n^\op,\{B^i\})}\left(\left( \bigcup_{s \in S'} X_s \right)^{\barsmash i}, \Sigma^k_{B^i} E^{\barsmash i} \right) \ar[d] \\
K \times I \ar[r] \ar@{-->}[ur] & \underset{T \subsetneq S'}\lim\, \Map_{(\cat M_n^\op,\{B^i\})}\left(\left( \bigcup_{t \in T} X_t \right)^{\barsmash i}, \Sigma^k_{B^i} E^{\barsmash i} \right) \ar@{=}[d] \\
& \Map_{(\cat M_n^\op,\{B^i\})}\left(\underset{T \subsetneq S'}\colim \left( \bigcup_{t \in T} X_t \right)^{\barsmash i}, \Sigma^k_{B^i} E^{\barsmash i} \right) } \]
Rearranging gives
\[ \xymatrix{
K \times \left[ \left( \{0\} \times \left( \bigcup_{s \in S'} X_s \right)^{\barsmash i} \right) \cup \left( I \times \underset{T \subsetneq S'}\colim \left( \bigcup_{t \in T} X_t \right)^{\barsmash i} \right) \right] \ar[r] \ar[d] & \Sigma^k_{B^i} E^{\barsmash i} \ar[d] \\
K \times I \times \left( \bigcup_{s \in S'} X_s \right)^{\barsmash i} \ar[r] \ar@{-->}[ur] & B^i } \]
This is a square of maps of diagrams. The left and right vertical maps are at each level of the diagram $h$-cofibrant and $h$-fibrant, respectively.
This is not so helpful on its own, since our model structure on diagrams uses $q$-cofibrations instead of $h$-cofibrations.
Fortunately, we can define maps of $\cat M_n^\op$-diagrams one level at a time, one cell at a time.
So consider inductively the modified square
\[ \xymatrix{
K \times \left[ \left( \{0\} \times \left( \bigcup_{s \in S'} X_s \right)^{\barsmash i} \right) \cup \left( I \times \left[ \Delta \cup \underset{T \subsetneq S'}\colim \left( \bigcup_{t \in T} X_t \right)^{\barsmash i} \right] \right) \right] \ar[r] \ar[d] & \Sigma^k_{B^i} E^{\barsmash i} \ar[d] \\
K \times I \times \left( \bigcup_{s \in S'} X_s \right)^{\barsmash i} \ar[r] \ar@{-->}[ur] & B^i } \]
where $\Delta \subset (\bigcup_{s \in S'} X_s)^{\barsmash i}$ is the fat diagonal.
From Prop. \ref{diagonalcells} above we know that $(\bigcup_S X_s)^{\barsmash i}$ is built up from its fat diagonal by attaching free $\Sigma_i$-cells, so we can define the lift one free $\Sigma_i$-cell at a time.
Each time, we get an acyclic $h$-cofibration on the left, and the map on the right is an $h$-fibration, so the lift exists.
By construction, it is natural with respect to all the maps in $\cat M_n^\op$.

Now that we have a fibration cube of spaces
\[ T \mapsto \Map\left( \left( \bigcup_{t \in T} X_t \right)^{\barsmash i}, \Sigma^k_{B^i} E^{\barsmash i} \right) \]
we check that the map from the initial vertex into the ordinary limit of the rest is a weak equivalence:
\[ \resizebox{\textwidth}{!}{\xymatrix{
\Map_{(\cat M_n^\op,\{B^i\})}\left( \left( \bigcup_{s \in S} X_s \right)^{\barsmash i}, \Sigma^k_{B^i} E^{\barsmash i} \right) 
\ar[r] & \underset{T \subsetneq S'}\lim\, \Map_{(\cat M_n^\op,\{B^i\})}\left( \left( \bigcup_{t \in T} X_t \right)^{\barsmash i}, \Sigma^k_{B^i} E^{\barsmash i} \right) \ar@{=}[d] \\
& \Map_{(\cat M_n^\op,\{B^i\})}\left( \underset{T \subsetneq S'}\colim \left[\left( \bigcup_{t \in T} X_t \right)^{\barsmash i}\right], \Sigma^k_{B^i} E^{\barsmash i} \right) }} \]
Since $i \leq n < |S|$, every choice of $i$ points in $\bigcup_S X_s$ lies in some $\bigcup_T X_t$ for some proper subset $T$ of $S$.
Therefore this map is a homeomorphism.
\end{proof}

\begin{thm}\label{main}
$P_n F$ is the universal $n$-excisive approximation of $F$.
\end{thm}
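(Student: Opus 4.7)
The plan is to invoke the Recognition Principle for $P_n F$ (Corollary \ref{recognition}) directly. The work of the preceding three propositions has already assembled precisely the three hypotheses needed: (i) the functor defined by the formula $\Map_{(\cat M_n^\op, \{B^i\})}(X^{\barsmash i}, \Sigma^\infty_{B^i} E^{\barsmash i})$ is $n$-excisive; (ii) it is a homotopy functor (both verifications appeal to Ken Brown's lemma together with the cell-by-cell lifting machinery developed in section \ref{cells}); and (iii) the natural transformation $F \to P_n F$ is an equivalence at each object $\il \amalg B$ for $0 \leq i \leq n$, which exhausts (up to isomorphism) the skeleton $\mc R_{B,n}$.

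First I would isolate the natural transformation $F \to P_n F$ by exhibiting it explicitly: a fiberwise map $X \to \Sigma^k_B E$ induces the diagonal tuple $(f, f \barsmash f, \ldots, f^{\barsmash n})$ after adjointly packaging the smash powers, and compatibility with the morphisms in $\cat M_n^\op$ is automatic. Then I would simply cite the three propositions above to conclude that the hypotheses of Corollary \ref{recognition} are met. The corollary then yields a canonical equivalence from $P_n F$ (as constructed by the general machine of section \ref{firstconstruction}) to the explicit functor built here, which is exactly the statement of the theorem.

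The main obstacle is a logical one rather than a technical one: the Recognition Principle is only available once the abstract construction $P_n F$ of section \ref{firstconstruction} has been shown to exist and satisfy the universal property in Proposition \ref{exists}. Thus the present theorem is genuinely provisional on the later sections 5--8, and the proof really has the shape \emph{granted the universal $P_n$ construction exists, our explicit model realizes it}. Modulo that deferred ingredient, there is no further calculation to perform, so the proof can be a single short paragraph whose entire content is the citation of Corollary \ref{recognition} applied to the three propositions above.
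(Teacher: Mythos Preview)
Your proposal is correct and matches the paper's proof almost exactly: the paper's argument is a one-line citation of Cor.~\ref{recognition} together with the deferred existence result, relying on the three preceding propositions for the hypotheses. One small sharpening: the abstract existence you need here is not the construction of section~\ref{firstconstruction} (which only treats $B=*$) but Theorem~\ref{final_third} in section~\ref{hack}, since $F$ is defined on $\mc R_{B,\fin}$ with values in spectra; the paper cites that theorem specifically.
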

\begin{proof}
This follows from Cor. \ref{recognition} above and Thm. \ref{final_third} below.
Together, they tell us that the universal $n$-excisive approximation $P_n F$ exists and is uniquely identified by the property that $P_n F$ is $n$-excisive and $F \ra P_n F$ is an equivalence on the spaces with at most $n$ points.
\end{proof}

\begin{rmk}
Our construction of $P_n F$ is not finitary, but this could be fixed by applying the usual fibrant replacement functor $\Omega^\infty\Sigma^\infty$ to each fiber of the retractive space $\Sigma^k_{B^i} E^{\barsmash i}$, before taking maps in from $X$.
This is a more sophisticated construction, but when $X$ is a finite cell complex, it is equivalent to the simpler one we described in this section.
\end{rmk}

\subsection{The layers}\label{layers}

We would like to identify each layer $D_nF$ of the tower, defined to be the homotopy fiber of
\[ P_n F \ra P_{n-1} F \]
In fact, the natural map $P_n F \ra P_{n-1} F$ is a level fibration of spectra, so $D_n F$ is equivalent to the ordinary fiber.
This fiber consists of all collections of maps that are trivial on $X^{\barsmash i}$ for $i < n$ and that vanish on the fat diagonal of $X^{\barsmash n}$.
Therefore it may be written
\[ D_n F(X) \simeq \Map_{B^n}(X^{\barsmash n}/_{B^n}\Delta,\Sigma^\infty_{B^n} E^{\barsmash n})^{\Sigma_n} \]
where $X^{\barsmash n}/_{B^n}\Delta$ is the fiberwise quotient of $X^{\barsmash n}$ by the fat diagonal $\Delta$ over $B^n$.
More succinctly, it is the pushout of the diagram
\[ \xymatrix{ X^{\barsmash n} \\ \Delta \ar[u] \ar[r] & B^n } \]
In addition, our decoration $(-)^{\Sigma_n}$ means \emph{categorical} fixed points.
So the above spectrum is given at level $k$ by the $\Sigma_n$-equivariant maps 
\[ X^{\barsmash n}/_{B^n}\Delta \ra \Sigma^k_{B^n} E^{\barsmash n} \]
of retractive spaces over $B^n$.
%Since the left-hand side is always a retract of a finite free $\Sigma_n$-cell complex, we may use a standard argument to replace the categorical fixed points with either the homotopy orbits or the homotopy fixed points, without changing the homotopy type of $D_n F(X)$.

Finally, we specialize to the case where $M$ be a closed manifold, $B = M$, and $X = M \amalg M$.
Consider the following spaces:
\begin{itemize}
\item $\Delta \subset M^n$ is the fat diagonal.
\item $F(M;n) \cong M^n - \Delta$ is the noncompact manifold of ordered $n$-tuples of distinct points in $M$.
\item $\iota: M^n - \Delta \inj M^n$ is the inclusion map.
\item $C(M;n) \cong F(M;n)_{\Sigma_n}$ is the noncompact manifold of unordered $n$-tuples of distinct points in $M$.
\end{itemize}

Then the layers of the above tower can be rewritten
\begin{eqnarray*}
D_n F(M \amalg M) &=& \Gamma_{(M^n,\Delta)}(\Sigma^\infty_{M^n} E^{\barsmash n})^{\Sigma_n} \\
&\simeq & \Gamma_{M^n - \Delta}^c\left(\left. \Sigma^\infty_{M^n} E^{\barsmash n} \right|_{M^n - \Delta}\right)^{\Sigma_n} \\
&\cong & \Gamma_{F(M;n)}^c\left( \Sigma^\infty_{F(M;n)} \iota^* E^{\barsmash n} \right)^{\Sigma_n} \\
&\cong & \Gamma_{C(M;n)}^c(\Sigma^\infty_{C(M;n)} (\iota^* E^{\barsmash n})_{\Sigma_n}) \\
&\simeq & ((\iota^* E^{\barsmash n})_{\Sigma_n})^{-T(C(M;n))}
\end{eqnarray*}
Here $\Gamma_{(A,B)}$ denotes sections over $A$ that vanish on $B$, and $\Gamma_A^c$ denotes sections over $A$ with compact support.
The last step is the application of Poincar\'e duality (see \cite{ms}, \cite{cohen2009umkehr}) to the noncompact manifold $C(M;n)$ with twisted coefficients given by the bundle of spectra $(\iota^* E^{\barsmash n})_{\Sigma_n}$.
Since the manifold in question is noncompact, Poincar\'e duality gives an equivalence between cohomology with compact supports and homology desuspended by the tangent bundle of $C(M;n)$.
The result is the Thom spectrum $((\iota^* E^{\barsmash n})_{\Sigma_n})^{-T(C(M;n))}$.
We will see a few examples of this in the next section.

\section{Examples and calculations}

\begin{ex}
Taking $B = *$ and $E = Y$ for any based space $Y$ gives
\[ \begin{array}{rcl}
F(X) &=& \Sigma^\infty \Map_*(X,Y) \\
\vdots && \vdots  \\
P_n F(X) &=& \Map_{\cat M_n^\op,*}(X^{\sma i}, \Sigma^\infty Y^{\sma i}) \\
\vdots && \vdots  \\
P_2 F(X) &=& \Map_*(X \sma X,\Sigma^\infty Y \sma Y)^{\Sigma_2} \\
P_1 F(X) &=& \Map_*(X,\Sigma^\infty Y) \\
P_0 F(X) &=& *
\end{array} \]
with $n$th layer
\[ D_n F(X) = \Map_*(X^{\sma n}/\Delta,\Sigma^\infty Y^{\sma n})^{\Sigma_n} \]
This coincides with Arone's tower in \cite{arone_snaith}, and therefore converges when the connectivity of $Y$ is at least the dimension of $X$.
It is curious that the Taylor tower in the $Y$ variable should agree with the polynomial interpolation tower in the $X$ variable.
This seems to happen in the more general case $B \neq *$ as well.
\end{ex}

\begin{ex}
If $X = S^1$ and $Y$ is simply connected then the $n$th layer of the tower is
\[ \Map_*(S^n/\Delta,\Sigma^\infty Y^{\sma n})^{\Sigma_n} \cong \Omega^n \Sigma^\infty Y^{\sma n} \]
If $Y = \Sigma Z$ with $Z$ connected, then the $n$th layer is
\[ \Sigma^\infty Z^{\sma n} \]
It is well known that the tower splits in this case (\cite{arone_snaith}, \cite{bodigheimer1987stable}):
\[ \Sigma^\infty \Omega \Sigma Z \simeq \prod_{n=1}^\infty \Sigma^\infty Z^{\sma n} \]
\end{ex}

\begin{ex}
If $X$ is unbased we get the tower
\[ \begin{array}{rcl}
F(X) &=& \Sigma^\infty \Map(X,Y) \\
\vdots && \vdots  \\
P_n F(X) &=& \Map_{\cat M_n^\op}(X^i, \Sigma^\infty Y^{\sma i}) \\
\vdots && \vdots  \\
P_2 F(X) &=& \Map(X \times X,\Sigma^\infty Y \sma Y)^{\Sigma_2} \\
P_1 F(X) &=& \Map(X,\Sigma^\infty Y) \\
P_0 F(X) &=& *
\end{array} \]
If $Y = S^0$ and $X$ is any finite unbased CW complex then the $n$th layer of this tower is
\[ D_n F(X) \simeq \Map(X^n/\Delta,\Sph)^{\Sigma_n} \cong D((X^n/\Delta)_{\Sigma_n}) \]
where $D$ denotes Spanier-Whitehead dual.
If $Y = S^m$ and $m > \dim X$ then the tower converges to $\Sigma^\infty \Map(X,S^m)$, and the $n$th layer is
\[ D_n F(X) \simeq \Map(X^n/\Delta,\Sigma^{mn}\Sph)^{\Sigma_n} \simeq \Sigma^{mn} D((X^n/\Delta)_{\Sigma_n}) \]
\end{ex}

\subsection{Gauge groups and Thom spectra}\label{main_calculation}

Let $B = M$ be a closed connected manifold, and let $\mc P \ra M$ be a $G$-principal bundle.
The \emph{gauge group} $\mc G(\mc P)$ is defined to be the space of automorphisms of $\mc P$ as a principal bundle.
Consider the quotient
\[ \mc P^\ad = \mc P \times_G G^\ad \]
where $G^\ad$ is the group $G$ as a right $G$-space with the conjugation action.
Then we may identify $\mc G(\mc P)$ with the space of sections $\Gamma_M(\mc P^\ad)$.
Taking $E$ to be the ex-fibration $(\mc P^\ad) \amalg M$ and $X$ to be the retractive space $M \amalg M$ gives the tower
\[ \begin{array}{ccccc}
F(M \amalg M) &=& \Sigma^\infty \Gamma_M(\mc P^\ad \amalg M) &\cong & \Sigma^\infty_+ \mc G(\mc P) \\
&&\vdots \\
P_n F(M \amalg M) &=& \Gamma_{(\cat M_n^\op, \{M^i\})}(\Sigma^\infty_{M^i} (\mc P^\ad)^i \amalg M^i) \\
&&\vdots \\
P_2 F(M \amalg M) &=& \Gamma_{M \times M}(\Sigma^\infty_{M \times M} (\mc P^\ad)^2 \amalg M^2)^{\Sigma_2} \\
P_1 F(M \amalg M) &=& \Gamma_M(\Sigma^\infty_M \mc P^\ad \amalg M) &\simeq & (\mc P^\ad)^{-TM} \\
P_0 F(M \amalg M) &=& *
\end{array} \]
To describe the layers, we recall that $C(M;n)$ is the space of configurations of $n$ unordered distinct points in $M$.
We define $\mc C(\mc P^\ad;n)$ as the space of unordered configurations of $n$ points in $\mc P^\ad$, whose images in $C(M;n)$ are distinct.
Then the description of the $n$th layer in section \ref{layers} above becomes the Thom spectrum
\[ D_n F(M \amalg M) \simeq \mc C(\mc P^\ad;n)^{-T(C(M;n))} \]
In summary, this tower relates the stable homotopy type of the gauge group $\mc G(\mc P)$ to Thom spectra of configuration spaces.

If we use orthogonal spectra instead of prespectra, we get a tower of strictly associative ring spectra.
This proves Theorem \ref{intro_principal_tower} from the introduction.

By the Thom isomorphism, the homology of $\mc C(\mc P^\ad; n)^{-TC(M;n)}$ is the same as the homology of the base space $\mc C(\mc P^\ad; n)$, with coefficients twisted by the orientation bundle of $C(M;n)$ pulled back to $\mc C(\mc P^\ad; n)$.
We can calculate this homology using the zig-zag of homotopy pullback squares
\[ \xymatrix{
G^n \ar[d] & G^n \ar[d] & G^n \ar[d] \\
\mc C(\mc P^\ad;n) \ar[d] & \mc F(\mc P^\ad;n) \ar[d] \ar[l]_-{/\Sigma_n} \ar[r] \ar@{}[rd]|<{\Big\lrcorner} \ar@{}[ld]|<{\Big\llcorner} & (\mc P^\ad)^n \ar[d] \\
C(M;n) & F(M;n) \ar[l]_-{/\Sigma_n} \ar[r] & M^n } \]
where $F(M;n) \cong M^n - \Delta$ is ordered configurations of $n$ points in $M$.
Note that the manifold $F(M;n)$ is orientable iff $M$ is orientable, while $C(M;n)$ is orientable iff $M$ is orientable and $\dim M$ is even.

One may also use the scanning map
\[ \begin{array}{c}
C(M;n) \ra \Gamma_M(S^{TM})_n \\
\mc C(\mc P^\ad;n) \ra \Gamma_M(S^{TM} \sma_M (\mc P^\ad \amalg M))_n \end{array} \]
Here the subscript of $n$ denotes sections that are degree $n$ in the appropriate sense.
If $M$ is open, the scanning map gives an isomorphism on integral homology in a stable range \cite{mcduff1975configuration}.
If $M$ is closed, it gives an isomorphism on rational homology in a stable range \cite{church2011homological}.

\subsection{String topology}\label{string_topology}

Now we will finally construct the tower we described in the introduction.
We may start with the tower of section \ref{mapping} and set $B = M$, $E = LM \amalg M$, and $X = M \amalg M$.
Or, we may take the tower from section \ref{main_calculation} and set $G \simeq \Omega M$ and $\mc P \simeq *$, so that $\mc P^\ad \simeq LM$.
Either construction gives the tower
\[ \begin{array}{ccccc}
F(M \amalg M) &=& \Sigma^\infty \Gamma_M(LM \amalg M) &\simeq & \Sigma^\infty_+ \Omega_\id \haut(M) \\
&&\vdots \\
P_n F(M \amalg M) &=& \Gamma_{(\cat M_n^\op, \{M^i\})}(\Sigma^\infty_{M^i} LM^i \amalg M^i) \\
&&\vdots \\
P_2 F(M \amalg M) &=& \Gamma_{M \times M}(\Sigma^\infty_{M \times M} LM^2 \amalg M^2)^{\Sigma_2} \\
P_1 F(M \amalg M) &=& \Gamma_M(\Sigma^\infty_M LM \amalg M) &\simeq & LM^{-TM} \\
P_0 F(M \amalg M) &=& *
\end{array} \]
The $n$th layer is
\[ D_n F(M \amalg M) \simeq \mc C(LM;n)^{-TC(M;n)} \]
As before, $\mc C(LM;n)$ is configurations of $n$ unmarked free loops in $M$ with distinct basepoints.
This proves Theorem \ref{intro_string_tower} from the introduction.

\begin{rmk}
The connectivity of the $n$th layer $\mc C(LM;n)^{-TC(M;n)}$ is negative, and decreases to $-\infty$ as $n \ra \infty$.
Therefore the tower does not converge to $F$.
We may phrase this another way: if the first layer is $k$-connected, then the $n$th layer is approximately $nk$-connected.
This is actually consistent with other results from calculus of functors (cf. \cite{calc3} Thm. 1.13 and \cite{arone_snaith} Thm. 2), the difference here being that $k$ is negative.
In the more general case of an ex-fibration $E \ra M$, the tower does converge when the dimension of $M$ is at most the connectivity of the fibers of $E$.
This will follow from Thm \ref{convergence} below.
\end{rmk}

We conclude this section with a short homology calculation.
We will skip over the first layer $LM^{-TM}$, since it can be calculated using methods from \cite{cohen2004loop}.
Instead, taking $M = S^n$, we use standard Serre spectral sequence arguments to calculate the second layer in rational homology
\[ H_*(\mc C(LS^n;2)^{-TC(S^n;2)};\Q) \]

If $n$ is odd, then $H_q(\mc C(LS^n;2);\Q)$ with twisted coefficients is
\[ \left\{ \begin{array}{cc}
\Q & q = n-1, 2n-2, 2n-1, 3n-2 \\
\Q^2 & q = 3n-3, 4n-4, 4n-3, 5n-4 \\
\vdots & \vdots \\
0 & \textup{otherwise}
\end{array} \right. \]
and if $n$ is even the answer (with untwisted coefficients) is
\[ \left\{ \begin{array}{cc}
\Q & q = n-1, 3n-3, 3n-2, 4n-4, \\
& 5n-4, 6n-6, 6n-5, 8n-7 \\
\Q^2 & q = 5n-5, 7n-7, 7n-6, 8n-8, 9n-9, \\
& 9n-8, 10n-10, 10n-9, 12n-11 \\
\vdots & \vdots \\
0 & \textup{otherwise}
\end{array} \right. \]
To get the homology of the spectrum $\mc C(LS^n;2)^{-TC(S^n;2)}$ we subtract $2n$ from each degree.
This spectrum is a homotopy fiber of a map of rings, so its homology carries an associative multiplication with no unit.
It is easy to check however that most of the products are zero, and when $n$ is odd, all the products are zero.

\section{First construction of $P_n F$}\label{firstconstruction}

We still need to add teeth to Cor. \ref{recognition} by giving a functorial construction of $P_n F$ for general $F$ with the desired properties.
We begin by sketching a description of $P_n F$ in the non-fiberwise case ($B = *$) that the author learned from Greg Arone.
Broadly, the construction in this section is the cellular approach to $P_n F$, whereas our second construction in section \ref{secondconstruction} is the simplicial approach.

Let $F: \mc T_{\fin}^\op \ra \mc T$ be a contravariant homotopy functor from finite based CW complexes to based spaces.
Assume in addition that $F$ is \emph{topological}, meaning it is enriched over unbased spaces.
Then we define $P_n F$ to be the enriched homotopy right Kan extension of $F$, from the category $\mc T_n$ of finite based sets $\il_+ = \{1,\ldots,i\}_+$ with $0 \leq i \leq n$ back to the category of all finite CW complexes $\mc T_{\fin}$.

We can give $P_n F$ more explicitly as follows.
For a fixed finite based space $X$, define two diagrams of unbased spaces over $\mc T_n^\op$:
\begin{align*}
\il_+ &\leadsto X^i = \Map_*(\il_+,X) \\
\il_+ &\leadsto F(\il_+)
\end{align*}
Now consider the space of (unbased) maps between these two diagrams
\[ P_n F(X) \overset{?}= \Map_{\mc T_n^\op}(X^i,F(\il_+)) \]
Note that since $F$ is topological, there is a natural map from $F(X)$ into this space.
Furthermore, this map is a homeomorphism when $X = \il_+$ for $0 \leq i \leq n$, since then the diagram $X^i$ is generated freely by a single point at level $\il_+$, corresponding to the identity map of $\il_+$.
This is good, but we have missed the mark a little bit because this construction is not $n$-excisive in general.

To fix this, we take the \emph{derived} or \emph{homotopically correct} mapping space of diagrams instead.
We do this by fixing a model structure on $\mc T_n^\op$ diagrams in which the weak equivalences are defined objectwise.
Then we replace $\{X^i\}$ by a cofibrant diagram and $\{F(\il_+)\}$ by a fibrant diagram.
The space of maps between these replacements is, by definition, the homotopically correct mapping space.

More concretely, we can fatten up the diagram $\{X^i\}$ to the diagram
\[ \il_+ \leadsto \underset{\jl_+ \in (\mc T_n^\op \downarrow \il_+)}\hocolim\, X^j \]
and leave $\{F(\il_+)\}$ alone.
Then the above conditions are satisfied for the projective model structure defined above in section \ref{cells}.
This standard thickening is sometimes called a \emph{two-sided bar construction} \cite{may1975classifying} \cite{shulman2006homotopy}.

Equivalently, we can leave $\{X^i\}$ alone and fatten up $\{F(\il_+)\}$ to
\[ \il_+ \leadsto \underset{\jl_+ \in (\il_+ \downarrow \mc T_n^\op)}\holim\, F(\jl_+) \]
Then the above conditions would be satisfied for the injective model structure, if it existed.
Note that the two spaces we get in either case are actually homeomorphic:
\[ P_n F(X) = \Map_{\mc T_n^\op}\left[\underset{\jl_+ \in (\mc T_n^\op \downarrow \il_+)}\hocolim\, X^j,F(\il_+)\right] 
\cong \Map_{\mc T_n^\op}\left[ X^j,\underset{\il_+ \in (\jl_+ \downarrow \mc T_n^\op)}\holim\, F(\il_+) \right] \]
Take either of these as our definition of $P_n F(X)$.
The natural map $F(X) \ra P_n F(X)$ can be seen by taking the previous case and observing in addition that there are always natural maps
\[ \hocolim \ra \colim \qquad \text{ or } \qquad \lim \ra \holim \]

Consider the second description
\[ P_n F(X) \cong \Map_{\mc T_n^\op}\left[ X^j,\underset{\il_+ \in (\jl_+ \downarrow \mc T_n^\op)}\holim\, F(\il_+) \right] \]
When $X = \il_+$ and $i \leq n$, we may evaluate our map of diagrams at the ``identity'' point of $(\il_+)^i$, giving a homeomorphism
\[ P_n F(\il_+) \congar \underset{\jl_+ \in (\il_+ \downarrow \mc T_n^\op)}\holim\, F(\jl_+) \]
of spaces under $F(\il_+)$.
But the map
\[ F(\il_+) \simar \underset{\jl_+ \in (\il_+ \downarrow \mc T_n^\op)}\holim\, F(\jl_+) \]
is an equivalence too, and this forces $F(\il_+) \ra P_n F(\il_+)$ to be an equivalence.

Next, consider the first description
\[ P_n F(X) = \Map_{\mc T_n^\op}\left[\underset{\jl_+ \in (\mc T_n^\op \downarrow \il_+)}\hocolim\, X^j,F(\il_+)\right] \]
Remembering that $X$ is a cell complex, the diagram on the left is a cell complex of diagrams.
It has one free cell of dimension $d+m$ at the vertex $\il_+$ for every choice of $d$-cell in $X^j$ and choice of $m$-tuple of composable arrows
\[ \jl_+ = \uline{i_0}_+ \ra \uline{i_1}_+ \ra \ldots \ra \uline{i_m}_+ = \il_+ \]
Therefore the techniques of section \ref{correct} above tell us that $P_n F$ is $n$-excisive.
This completes the proof that $n$-excisive approximations exist for topological functors from based spaces to based spaces.

The assumption that $F$ is topological is not much stronger than the assumption that $F$ is a homotopy functor.
To see this, first define $\Delta_X$ to be the category of simplices $\Delta^p \ra X$.
A map from $\Delta^p \ra X$ to $\Delta^q \ra X$ is a factorization $\Delta^q \inj \Delta^p \ra X$, where $\Delta^q \inj \Delta^p$ is a composition of inclusions of faces.
Then even when $F$ is not topological, each map $\Delta^p_+ \sma Y \ra X$ gives a map
\[ \Delta^p_+ \sma F(X) \simar F(X) \ra F(\Delta^p_+ \sma Y) \]
These assemble into a zig-zag
\[ F(X)
\ra \underset{\Delta_{\Map_*(Y,X)}}\holim\, F(\Delta^p_+ \sma Y)
\overset\sim\la \underset{\Delta_{\Map_*(Y,X)}}\holim\, F(Y) \]
\[ 
\cong \Map(|\Delta_{\Map_*(Y,X)}|,F(Y))
\overset\sim\la \Map(\Map_*(Y,X),F(Y)) \]
assuming $\Map_*(Y,X)$ has the homotopy type of a CW complex.
So we don't quite get a map from $F(X)$ to the far right-hand side, but we get something close enough for the purposes of homotopy theory.
In particular, setting $Y = \il_+$ we get a natural zig-zag
\[ F(X) \ra \ldots \ra \Map(X^i,F(\il_+)) \]
and therefore we get a natural zig-zag from $F(X)$ to $P_n F(X)$, which gives a natural map $F \ra P_n F$ in the homotopy category of functors.
This map is still an equivalence when $X$ has at most $n$ points, because in that case the homotopy limits become ordinary products and we can use the same argument as above.

Once we have the case where $F$ takes based spaces to spaces, we can also easily handle the case when $F$ takes based spaces to spectra.
Simply post-compose $F$ with fibrant replacement of spectra, and work one level at a time.
Of course, the above constructions naturally commute with taking the based loop space $\Omega$, so they pass to a construction on spectra.

If instead $F$ is defined on unbased finite CW complexes $\mc S_\fin$, then we make the same construction, except that we replace $\mc T_n$ with the category of finite unbased sets $\mc S_n$.
Using Prop. \ref{exists} above, we have finished the proof of the following:
\begin{thm}\label{final}
If $F: \mc C^\op \ra \mc D$ is a homotopy functor, where $\mc C = \mc S_{\fin}$ or $\mc T_{\fin}$ and $\mc D = \mc T$ or $\mc Sp$, then there is a universal $n$-excisive approximation $P_n F$, and $F \ra P_n F$ is an equivalence on spaces with at most $n$ points.
\end{thm}

This result is a good first step, but we really want to know that approximations exist for functors defined on the categories $\mc S_{B,\fin}$ and $\mc R_{B,\fin}$ of fiberwise spaces.
We will do this in section \ref{secondconstruction} by switching to a more simplicial construction
\[ P_n F(X) = \underset{\Delta^p \times \il \ra X}\holim\, F(\il \times \Delta^p) \]

\subsection{Higher Brown representability}

Before we move on, we should point out that this first construction is better suited to proving a kind of Brown Representability for homogeneous $n$-excisive functors.
Let $F: \mc C^\op \ra \mc D$ be a homotopy functor as in Thm. \ref{final} above.
Then $F$ is \emph{$n$-reduced} if $P_{n-1}F \simeq *$, or equivalently if $F(X) \simeq *$ whenever $X$ is a space with at most $(n-1)$ points.
Note that
\[ F_n(X) := \hofib(F(X) \ra P_{n-1}F(X)) \]
is always $n$-reduced, and $F_n(\nl)$ is the \emph{cross effect} $\cross_n F(\uline{1},\ldots,\uline{1})$ defined below in section \ref{cross_def}.

We say that $F$ is \emph{homogeneous $n$-excisive} if it is $n$-excisive and $n$-reduced.
So $D_n F(X) = \hofib(P_n F(X) \ra P_{n-1}F(X))$ is always homogeneous $n$-excisive.
Homogeneous $1$-excisive functors are a good notion of space-valued or spectrum-valued cohomology theories.
From numerous sources (e.g. \cite{chorny2007brown}, \cite{cohen2009umkehr}, \cite{ms}) we expect such cohomology theories to be represented by spaces or spectra.

Examining the construction of $P_n F$ in this section, we see that $P_n F(X) \ra P_{n-1} F(X)$ is a Serre fibration when $X$ is a CW complex.
Therefore the ordinary fiber is equivalent to $D_n F$.
This can be rephrased as the following:

\begin{prop}
\begin{itemize}
\item If $F: \mc S_\fin^\op \ra \mc T$ is an $n$-reduced homotopy functor then there is a natural map
\[ F(X) \ra D(X) := \Map_*(X^n/\Delta,F(\nl))^{\Sigma_n} \]
in the homotopy category of functors on $\mc S_\fin^\op$.
If $F$ is homogeneous $n$-excisive then this map is an equivalence.
\item If $F: \mc T_\fin^\op \ra \mc T$ then the same is true for
\[ F(X) \ra D(X) := \Map_*(X^{\sma n}/\Delta,F(\nl_+))^{\Sigma_n} \]
\item Analogous statements hold when the target of $F$ is spectra.
\end{itemize}
\end{prop}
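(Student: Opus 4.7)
The plan is to combine the construction of $P_n F$ developed in this section with the preceding observation that $P_n F(X) \ra P_{n-1} F(X)$ is a Serre fibration whose ordinary fiber is equivalent to $D_n F(X)$. Since $F$ is $n$-reduced, $F(\il) \simeq *$ for $i < n$, so $P_{n-1} F \simeq *$ (it is $(n-1)$-excisive and agrees with $F$ on these spaces). Hence the fiber inclusion $D_n F \ra P_n F$ is an equivalence in the homotopy category of functors, and the natural transformation $F \ra P_n F$ lifts canonically to a map $F \ra D_n F$ there.

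Next I would identify the ordinary fiber of $P_n F(X) \ra P_{n-1} F(X)$ explicitly as $\Map_*(X^n/\Delta, F(\nl))^{\Sigma_n}$. Using the description $P_n F(X) = \Map_{\mc U_n^\op}[\{X^j\}, \{F(\jl)\}]$ (with the appropriate derived replacement), the restriction to $P_{n-1} F$ forgets the component at level $\nl$. A diagram map in the fiber has its component at each $j < n$ constant at the basepoint of $F(\jl)$, while its top component is a $\Sigma_n$-equivariant map $X^n \ra F(\nl)$; naturality with respect to every structure map $\nl \ra \jl$ with $j < n$ forces this map to send the image of the induced $X^j \ra X^n$ to the basepoint. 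Since by pigeonhole every such structure map is non-injective, the union of these images is exactly the fat diagonal $\Delta \subset X^n$, so the fiber is $\Map_*(X^n/\Delta, F(\nl))^{\Sigma_n}$. Composing with $F \ra P_n F$ yields the natural map in the homotopy category.

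If $F$ is additionally $n$-excisive, then by the universal property of $P_n F$ the map $F \ra P_n F$ is an equivalence, so the composite $F(X) \ra D(X)$ is as well. The based variant is identical after replacing $\mc U_n$, Cartesian powers, and the fat diagonal in $X^n$ with $\mc T_n$, smash powers, and the fat diagonal in $X^{\sma n}$. The spectrum-valued case follows by post-composing with a fibrant replacement functor and running the argument one level at a time, since both the Serre fibration property and the identification of the fiber commute with the loop functor.

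The main obstacle is the clean identification of the ordinary fiber. The construction of $P_n F$ involves a derived replacement of the source diagram $\{X^j\}$ that fattens it to a cofibrant object in the projective model structure, and one has to verify that this thickening collapses naturally and $\Sigma_n$-equivariantly to a copy of $X^n/\Delta$ at the top level after passing to the fiber. The key point making this clean is that every non-injective structure map in $\mc U_n^\op$ factors through a strictly smaller set: the naturality condition with respect to these maps is precisely what cuts the top-level mapping space down to the subspace of maps that vanish on the fat diagonal.
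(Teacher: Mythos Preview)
Your proposal is correct and follows essentially the same approach as the paper. The paper itself treats this proposition as a direct rephrasing of the preceding observation that $P_n F(X) \to P_{n-1} F(X)$ is a Serre fibration with ordinary fiber equivalent to $D_n F$; you have supplied the details the paper omits, namely the use of $n$-reducedness to make $P_{n-1} F \simeq *$, the explicit identification of the fiber via the naturality constraints, and the appeal to the universal property of $P_n F$ in the homogeneous case.

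One small refinement: your fiber computation as written accounts for the morphisms $\jl \to \nl$ in $\mc U_n^\op$ coming from non-injective functions $\nl \to \jl$ (which impose vanishing on the fat diagonal), but there are also morphisms $\nl \to \jl$ in $\mc U_n^\op$ coming from functions $\jl \to \nl$, which impose that the top component land in the strict fiber of $F(\nl) \to \prod_{j<n} F(\jl)$. Since $F$ is $n$-reduced this extra constraint is homotopically vacuous, so your conclusion is unaffected, but it is worth noting when you assert the strict fiber equals $\Map_*(X^n/\Delta, F(\nl))^{\Sigma_n}$ rather than merely is equivalent to it.
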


We will now strengthen this to an equivalence of homotopy categories.
Let $G$ be a finite group.
Recall that the usual notion of \emph{$G$-equivalence} of $G$-spaces is an equivariant map $X \ra Y$ which induces equivalences $X^H \ra Y^H$ for all subgroups $H < G$.
We will call an equivariant map $X \ra Y$ a \emph{na\"ive $G$-equivalence} if it is merely an equivalence when we forget the $G$ action.
There are two well-known cofibrantly generated model structures on $G$-spaces, one which gives the $G$-equivalences and one which gives the na\"ive $G$-equivalences.

Examining the behavior of $D(X)$ on the spaces $\il$ or $\il_+$ for $i \leq n$, it is clear that the homotopy type of $D(X)$ is determined by the \emph{nonequivariant} or \emph{na\"ive} homotopy type of $F(\nl)$ or $F(\nl_+)$.
The following is then straightforward:

\begin{prop}
The above construction gives an equivalence between the homotopy category of homogeneous $n$-excisive functors to spaces and the na\"ive homotopy category of $\Sigma_n$-spaces.
A similar statement holds for functors to spectra.
\end{prop}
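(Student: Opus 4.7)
The plan is to exhibit the functor $D: Y \mapsto D_Y$, with $D_Y(X) = \Map_*(X^{\sma n}/\Delta, Y)^{\Sigma_n}$ in the based case and its evident unbased analog, as a homotopy inverse to evaluation at $\nl_+$ with its tautological $\Sigma_n$-action by permutation of coordinates. The first order of business is to verify that $D_Y$ is always homogeneous $n$-excisive: it is $(n-1)$-reduced because $X^{\sma n}/\Delta = \ast$ whenever $X$ has fewer than $n$ non-basepoint points, and $n$-excisive by the cubical/free $\Sigma_n$-cell argument from Section \ref{correct} (up to relabelling, $D_Y$ is the $n$-th layer of the tower with constant coefficient $Y$).

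Next I would identify the two composites. Plugging $X = \nl_+$ into $D_Y$ yields $\Map_*((\Sigma_n)_+, Y)^{\Sigma_n} \cong Y$ as $\Sigma_n$-spaces, because $\nl_+^{\sma n}/\Delta$ is $(\Sigma_n)_+$ as a based free $\Sigma_n$-set; the isomorphism is manifestly natural in $Y$ and matches the two $\Sigma_n$-actions. The opposite composite $F \leadsto D_{F(\nl_+)}$ is already handled by the preceding proposition, which produces a natural transformation $F \ra D_{F(\nl_+)}$ in the homotopy category of functors and asserts it is an equivalence when $F$ is homogeneous $n$-excisive.

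Finally, I would check that $D$ and evaluation preserve the appropriate equivalences. Evaluation preserves them tautologically. For $D$, Prop.\ \ref{diagonalcells} applied with $B = \ast$ exhibits $X^{\sma n}/\Delta$ as a free based $\Sigma_n$-CW complex, and mapping out of such a complex into a na\"ive $\Sigma_n$-equivalence produces an equivalence on $\Sigma_n$-fixed points by cellular induction (the equivariant cells have the form $(\Sigma_n)_+ \sma D^k_+$, and $\Map_*((\Sigma_n)_+ \sma D^k_+, Y)^{\Sigma_n} \cong \Map_*(D^k_+, Y)$, which is manifestly homotopy invariant in the na\"ive $\Sigma_n$-equivalence on $Y$). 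I expect the main technical obstacle to be the bookkeeping required to strictify these constructions into genuine functors between the underlying categories, so that descent to the homotopy categories yields an equivalence rather than an object-by-object correspondence; for this one invokes the projective model structure on $\mc T_n^\op$-diagrams described in Section \ref{cells} and chooses functorial cofibrant and fibrant replacements, exactly as in the first construction of $P_n F$. The spectrum-valued case follows levelwise after fibrant replacement, as in the proof of Thm.\ \ref{final}.
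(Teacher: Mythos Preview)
Your proposal is correct and follows the same approach the paper has in mind; the paper merely asserts the result is ``straightforward'' given the preceding proposition, and your outline supplies exactly the details one would fill in: that $Y \mapsto D_Y$ lands in homogeneous $n$-excisive functors, that $D_Y(\nl_+) \cong Y$ via $\nl_+^{\sma n}/\Delta \cong (\Sigma_n)_+$, that the other composite is handled by the preceding proposition, and that $D$ respects na\"ive equivalences because $X^{\sma n}/\Delta$ is a free $\Sigma_n$-CW complex. Your caveat about strictification is well placed but not a genuine obstacle here, since the needed constructions are already on-the-nose functors of $Y$ and the zig-zag for $F \to D_{F(\nl_+)}$ was produced in the homotopy category in the preceding proposition.
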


\section{Properties of homotopy limits}

In order to carry out our second construction of $P_n F$, we need a small collection of facts about homotopy limits.
This section is expository except for Prop. \ref{holim_twisted_split}.

Let $[n]$ denote the totally ordered set $\{0,1,\ldots,n\}$ as a category.
Let $\Delta[p]$ denote the standard $p$-simplex as a simplicial set, and let $\Delta^p = |\Delta[p]|$ denote its geometric realization.
If $\cat I$ is any small category, let $N\cat I$ denote its nerve and let $B\cat I = |N\cat I|$ denote its classifying space.
Recall from \cite{bousfield1987homotopy} that if $A: \cat I \ra \mc T$ is a diagram of based spaces, the homotopy limit is defined as the subspace
\[ \underset{\cat I}\holim\, A \subset \prod_{g: [n] \ra N\cat I} \Map_*(\Delta^n_+,A(g(n))), \]
consisting of all collections of maps that agree in the obvious way with the face and degeneracy maps of the nerve $N\cat I$.
The following is perhaps the most standard result about homotopy limits, and we have already used it several times.
It is included here for completeness.

\begin{prop}
If $A,B: \cat I \ra \mc T$ are two diagrams indexed by $\cat I$, and $A \ra B$ is a natural transformation that on each object $i \in \cat I$ gives a weak equivalence $A(i) \ra B(i)$, then it induces a weak equivalence
\[ \underset{\cat I}\holim\, A \ra \underset{\cat I}\holim\, B \]
\end{prop}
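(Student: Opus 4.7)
The plan is to induct on the skeletal filtration of the nerve $N\cat{I}$. For each $n \geq 0$, let $\holim^{(n)} A$ denote the subspace of $\prod_{g: \Delta[k] \ra N\cat{I}, k \leq n} \Map_*(\Delta^k_+, A(g(k)))$ consisting of collections of maps satisfying the face and degeneracy compatibility conditions for all simplices of dimension at most $n$. Then $\holim_\cat{I} A = \lim_n \holim^{(n)} A$, and the natural transformation $A \ra B$ induces compatible maps on each stage.

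The first step is to show that the restriction map $\holim^{(n)} A \ra \holim^{(n-1)} A$ is a Serre fibration. Fitting it into a pullback square
\[ \xymatrix{
\holim^{(n)} A \ar[r] \ar[d] & \prod_{g \in N\cat{I}_n^\nd} \Map_*(\Delta^n_+, A(g(n))) \ar[d] \\
\holim^{(n-1)} A \ar[r] & \prod_{g \in N\cat{I}_n^\nd} \Map_*(\partial \Delta^n_+, A(g(n)))
} \]
(restricted to the non-degenerate $n$-simplices, with the right vertical map induced by $\partial \Delta^n \inj \Delta^n$), we see that the right-hand map is a product of Serre fibrations since $\partial \Delta^n_+ \inj \Delta^n_+$ is a $q$-cofibration of based spaces. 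Hence the square is a homotopy pullback.

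Next I would induct on $n$. The base case $n = 0$ gives $\holim^{(0)} A = \prod_{i \in \cat{I}} A(i)$, and the map to $\holim^{(0)} B$ is a weak equivalence since it is an arbitrary product of weak equivalences (all spaces being well-based, these products are homotopy products). For the inductive step, the natural transformation $A \ra B$ induces a map of the pullback squares above. By the inductive hypothesis the lower-left map is a weak equivalence, and the right-hand maps are weak equivalences since $A(g(n)) \simar B(g(n))$ and $\Map_*(K_+, -)$ preserves weak equivalences for any CW complex $K$. Both squares are homotopy pullbacks, so the cube lemma gives that $\holim^{(n)} A \ra \holim^{(n)} B$ is a weak equivalence.

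Finally, since each $\holim^{(n)} A \ra \holim^{(n-1)} A$ is a Serre fibration, the tower $\{\holim^{(n)} A\}$ is a tower of fibrations, and $\holim_\cat{I} A = \lim_n \holim^{(n)} A$. The standard fact that inverse limits of towers of fibrations preserve levelwise weak equivalences then yields the conclusion. The only genuine obstacle is bookkeeping the degenerate simplices in the Bousfield--Kan formula so that the skeletal filtration really does stratify the homotopy limit with the fibration property above; this is routine once one notices that the matching-object description makes the degeneracy conditions automatic on each new stratum.
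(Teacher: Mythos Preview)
Your argument is the standard Bousfield--Kan skeletal filtration proof and is essentially correct. Note, however, that the paper does not actually prove this proposition: it introduces it as ``perhaps the most standard result about homotopy limits'' and simply records it, with an implicit reference to \cite{bousfield1987homotopy}. So there is no proof in the paper to compare yours against; you have supplied the expected argument where the paper deliberately omitted one.

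A couple of small remarks on your write-up. First, the parenthetical ``all spaces being well-based, these products are homotopy products'' is unnecessary and slightly misleading: arbitrary products of weak equivalences of spaces are weak equivalences because $\pi_n$ commutes with products, and this has nothing to do with basepoints. Second, your pullback square is correct, but the bottom horizontal map is not quite ``restriction''; for the $n$th face of a nondegenerate simplex $g = (i_0 \to \cdots \to i_n)$ one must post-compose with the structure map $A(i_{n-1}) \to A(i_n)$ before landing in $\Map_*(\partial\Delta^n_+, A(i_n))$. You clearly know this (it is implicit in invoking the matching-object description), but it would be worth saying explicitly. With those clarifications the argument goes through as written.
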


Recall that if $\cat I \overset\alpha\ra \cat J$ is a functor and $A: \cat J \ra \mc T$ is a diagram of spaces, then there is a naturally defined map
\[ \underset{\cat J}\holim\, A \ra \underset{\cat I}\holim\, (A \circ \alpha) \]
The functor $\cat I \overset\alpha\ra \cat J$ is \emph{homotopy initial} (or \emph{homotopy left cofinal}) if for each object $j \in \cat J$ the overcategory $(\alpha \downarrow j)$ has contractible nerve.

\begin{prop}\label{holim_initial}
If $\cat I \overset\alpha\ra \cat J$ is homotopy initial and $A: \cat J \ra \mc T$ is a diagram of spaces, then
\[ \underset{\cat J}\holim\, A \ra \underset{\cat I}\holim\, (A \circ \alpha) \]
is an equivalence.
\end{prop}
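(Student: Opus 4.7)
The plan is to prove this via the Bousfield--Kan end formula for homotopy limits, combined with a cofinality argument on the nerves of over-categories. Unwinding the excerpt's definition, a simplex $g: \Delta[n] \ra N\cat I$ with $g(n) = j$ is exactly an $n$-simplex of the nerve of the over-category $(\cat I \downarrow j)$, so
\[ \underset{\cat I}\holim\, A \;\cong\; \int_{j \in \cat I} \Map_*(N(\cat I \downarrow j)_+,\, A(j)), \]
and likewise over $\cat J$. After objectwise fibrant replacement of $A$ (permissible by the preceding proposition), these ends are the homotopically meaningful objects to compare.

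I would factor the canonical map $\underset{\cat J}\holim\, A \to \underset{\cat I}\holim\,(A\circ\alpha)$ through a ``twisted'' end as
\[ \int_{j \in \cat J} \Map_*(N(\cat J \downarrow j)_+, A(j)) \;\ra\; \int_{j \in \cat J} \Map_*(N(\alpha \downarrow j)_+, A(j)) \;\ra\; \int_{i \in \cat I} \Map_*(N(\cat I \downarrow i)_+, A(\alpha(i))). \]
The first arrow is induced (contravariantly) by the natural ``forgetful'' functor $(\alpha \downarrow j) \to (\cat J \downarrow j)$, $(i, \alpha(i) \ra j) \mapsto (\alpha(i) \ra j)$. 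The target has the terminal object $\id_j$ and so has contractible nerve; the source has contractible nerve by the homotopy-initial hypothesis on $\alpha$. Since each $A(j)$ is fibrant, $\Map_*(-, A(j))$ turns this weak equivalence of cofibrant based spaces into a weak equivalence, and this is preserved by taking the end in $j \in \cat J$, which is itself a homotopy limit of a diagram of weak equivalences of fibrant spaces.

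The second arrow is in fact a homeomorphism, by a Fubini-style end manipulation reminiscent of the Grothendieck construction. Unwinding, a point of the middle end is the datum, for each chain $i_0 \ra \ldots \ra i_n$ in $\cat I$ together with a morphism $f: \alpha(i_n) \ra j$ in $\cat J$, of a map $\Delta^n_+ \ra A(j)$, compatible with faces and degeneracies and with morphisms $j \ra j'$ in $\cat J$. The compatibility in $j$ forces the data at $(i_0 \ra \ldots \ra i_n, f)$ to equal $A(f)$ composed with the data at $(i_0 \ra \ldots \ra i_n, \id_{\alpha(i_n)})$, so the entire point is determined by its restriction to $f = \id$, giving exactly a point of $\int_{i \in \cat I} \Map_*(N(\cat I \downarrow i)_+, A(\alpha(i)))$. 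The main technical obstacle is verifying that the resulting composite agrees with the natural map that comes from the excerpt's explicit definition of $\holim$; I would handle this by matching the two prescriptions simplex-by-simplex in the ambient product of mapping spaces, which is routine but somewhat tedious bookkeeping.
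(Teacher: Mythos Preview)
The paper does not prove this proposition; it is stated in a section the author explicitly flags as expository (``This section is expository except for Prop.~\ref{holim_twisted_split}''), with an implicit reference to Bousfield--Kan \cite{bousfield1987homotopy}. Your argument is essentially the classical Bousfield--Kan proof, so there is nothing to compare against in the paper itself.

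Your proof is correct in outline. One point deserves a little more care: when you say the end over $j \in \cat J$ ``is itself a homotopy limit of a diagram of weak equivalences of fibrant spaces,'' that phrasing is loose. The cleanest way to justify that step is to observe that both $N(\cat J \downarrow -)$ and $N(\alpha \downarrow -)$ are projectively cofibrant $\cat J$-diagrams of spaces (each is built as a simplicial object whose level $n$ is a coproduct of representables $\cat J(j_n,-)$ or $\cat J(\alpha(i_n),-)$), and the map between them is an objectwise weak equivalence. Mapping a weak equivalence between cofibrant diagrams into an objectwise-fibrant diagram $A$ then gives a weak equivalence of derived mapping spaces, which is exactly the end you wrote. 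This is the same mechanism the paper uses elsewhere (e.g.\ section~\ref{cells}) when arguing with the projective model structure on diagrams. Your Fubini identification of the second arrow as a homeomorphism is standard and correct.
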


The following lemma provides some standard tools for proving that $\alpha$ is homotopy initial.

\begin{lem}
\begin{itemize}
\item Each adjunction of categories induces a homotopy equivalence on the nerves.
\item If $(\alpha \downarrow j)$ is related by a zig-zag of adjunctions to the one-point category $*$, then its nerve is contractible and therefore $\alpha$ is homotopy initial.
\item If $(\alpha \downarrow j)$ has an initial or terminal object then $\alpha$ is homotopy initial.
\item If $\alpha$ is a left adjoint then it is homotopy initial.
\end{itemize}
\end{lem}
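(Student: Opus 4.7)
The plan is to prove the first bullet directly, then deduce the remaining three as successive corollaries. The key input throughout is the standard fact that a natural transformation $\eta : F \Rightarrow G$ between functors $\cat I \to \cat J$ induces a homotopy $|N\cat I| \times I \to |N\cat J|$ between the induced maps on nerves, obtained from the functor $\cat I \times [1] \to \cat J$ determined by $\eta$.

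For the first bullet, suppose $L \dashv R$ is an adjunction between $\cat I$ and $\cat J$ with unit $\eta : \id \Rightarrow RL$ and counit $\epsilon : LR \Rightarrow \id$. Applying the observation above, the induced maps on nerves satisfy $|NR| \circ |NL| \simeq \id$ and $|NL| \circ |NR| \simeq \id$, so $|NL|$ and $|NR|$ are inverse homotopy equivalences. For the second bullet, by chaining the homotopy equivalences from the first bullet along the zig-zag, we see that $|N(\alpha \downarrow j)|$ is homotopy equivalent to $|N*| = *$, so $(\alpha \downarrow j)$ has contractible nerve; since this holds for every $j$, the functor $\alpha$ is homotopy initial by definition.

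For the third bullet, recall that if a category $\cat C$ has an initial object $c_0$, then the inclusion $* \hookrightarrow \cat C$ sending the unique object to $c_0$ is left adjoint to the unique functor $\cat C \to *$ (the hom-set $\cat C(c_0, c)$ is a singleton for each $c$). Dually, a terminal object yields a right adjoint to $\cat C \to *$. Either way, the second bullet (applied to a one-step zig-zag) gives that $|N\cat C|$ is contractible, so $\alpha$ is homotopy initial. For the fourth bullet, if $\alpha \dashv \beta$ then for each $j \in \cat J$ the counit $\epsilon_j : \alpha\beta(j) \to j$ is a terminal object of $(\alpha \downarrow j)$: indeed, by the universal property of the adjunction, any object $(i, f : \alpha(i) \to j)$ of the overcategory admits a unique morphism to $(\beta(j), \epsilon_j)$, namely the adjunct $\tilde f : i \to \beta(j)$. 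Therefore $(\alpha \downarrow j)$ has a terminal object, and the third bullet applies.

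There is no real obstacle here; the only point requiring mild care is to spell out the correspondence between natural transformations and simplicial homotopies on nerves, which underwrites the first bullet and hence everything that follows.
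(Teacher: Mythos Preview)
Your proof is correct. The paper states this lemma without proof, treating all four bullets as standard facts from the homotopy theory of categories; your argument supplies exactly the expected standard justifications (natural transformations yield simplicial homotopies on nerves, then the remaining bullets follow in cascade), so there is nothing to compare.
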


The categories indexing our homotopy limits will be categories of simplices in $X$:
\begin{df}\label{deltacat}
\begin{itemize}
\item If $X$ is a space, let $\Delta_X$ denote the category of simplices $\Delta^p \ra X$.
A map from $\Delta^p \ra X$ to $\Delta^q \ra X$ is a factorization
\[ \Delta^q \inj \Delta^p \ra X \]
where $\Delta^q \inj \Delta^p$ is a composition of inclusions of faces.
The classifying space $B\Delta_{X}$ is homeomorphic to the barycentric subdivision of the thick geometric realization of $S_\cdot X$; therefore there is a functorial weak equivalence
\[ B\Delta_X \cong \|S_\cdot X\| \simar |S_\cdot X| \simar X \]
\item If $X_\cdot$ is a simplicial set, there is an obvious analogue of $\Delta_{X_\cdot}$, whose classifying space is homeomorphic to the thick realization of $X_\cdot$:
\[ B\Delta_{X_\cdot} \cong \|X_\cdot\| \simar |X_\cdot| \]
In both cases, the thick and thin realizations are equivalent because every simplicial set is ``good'' in the sense of Segal (\cite{segal1974categories}).
\end{itemize}
\end{df}

Next, we need a fact about iterated homotopy limits.
We recall the colimit version first.
If $F: \cat I \ra \cat{Cat}$ is a small diagram of small categories, the \emph{Grothendieck construction} gives a larger category $\cat I \int F$, whose objects are pairs $(i,x)$ of an object $i \in \cat I$ and an object $x \in F(i)$.
The maps $(i,x) \ra (j,y)$ are arrows $i \overset{f}\ra j$ in $\cat I$, and arrows $F(f)(x) \ra y$ in $F(j)$.
\emph{Thomason's Theorem} tells us that a homotopy colimit of a diagram $A: \cat I \int F \ra \mc T$ is expressed as an iterated homotopy colimit:
\[ \underset{\cat I \int F}\hocolim\, A \simeq \underset{i \in \cat I}\hocolim\,\left( \underset{F(i)}\hocolim\, A \right) \]

To formulate the result for homotopy limits, we again let $F: \cat I \ra \cat{Cat}$ be a small diagram of small categories.
Then the \emph{reverse Grothendieck construction} gives a larger category $\cat I \int^R F$, whose objects are again pairs $(i,x)$ of an object $i \in \cat I$ and an object $x \in F(i)$.
The maps $(i,x) \ra (j,y)$ are arrows $j \overset{f}\ra i$ in $\cat I$, and arrows $x \ra F(f)(y)$ in $F(i)$.
Note that this is related to the original Grothendieck construction in that
\[ \textstyle \cat I \int^R F \cong ( \cat I \int (\op \circ F) )^\op \]

\begin{prop}[Dual of Thomason's Theorem]\label{dual_Thomason}
For a diagram $A: \cat I \int^R F \ra \mc T$, there is a natural weak equivalence
\[ \underset{\cat I \int^R F}\holim\, A \overset\sim\ra \underset{i \in \cat I^\op}\holim\,\left( \underset{F(i)}\holim\, A \right) \]
\end{prop}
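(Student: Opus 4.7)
The plan is to exploit the projection $\pi \colon \cat I \int^R F \to \cat I^\op$, $(i, x) \mapsto i$, and apply a Fubini theorem for homotopy right Kan extensions, whose key step reduces via Proposition \ref{holim_initial} to a short combinatorial verification. Note first that $\pi$ is a well-defined covariant functor into $\cat I^\op$: a morphism $(i, x) \to (j, y)$ in $\cat I \int^R F$ is given by an arrow $f \colon j \to i$ in $\cat I$ together with $x \to F(f)(y)$ in $F(i)$, and $f$ corresponds precisely to an arrow $i \to j$ in $\cat I^\op$.

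The Fubini formula for homotopy right Kan extensions,
\[ \holim_{\cat E} A \simeq \holim_{\cat B} R\pi_*A, \qquad R\pi_*A(b) = \holim_{(b \downarrow \pi)} A \circ Q_b, \]
applied to our $\pi$ reduces the claim to the identification $R\pi_*A(i) \simeq \holim_{F(i)} A$, naturally in $i \in \cat I^\op$. (The Fubini formula itself is standard; one proof uses the Quillen adjunction between restriction and right Kan extension in the projective model structures on diagram categories, another is a direct comparison of the cosimplicial bar descriptions of both sides.) To produce the identification, consider the inclusion of the fiber into the undercategory, $\iota_i \colon F(i) \to (i \downarrow \pi)$, $x \mapsto ((i, x), \id_i)$. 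A direct unravelling of the reverse Grothendieck construction shows that for any $b = ((j, y), g)$ in $(i \downarrow \pi)$, with $g \colon j \to i$ in $\cat I$, a morphism $\iota_i(x) \to b$ in $(i \downarrow \pi)$ is precisely a morphism $\alpha \colon x \to F(g)(y)$ in $F(i)$: the base component is forced by the undercategory compatibility to equal $g$, and only the second component is free. Hence the overcategory $(\iota_i \downarrow b)$ is isomorphic as a category to the slice $F(i) \downarrow F(g)(y)$, which has a terminal object $(F(g)(y), \id)$ and therefore contractible nerve. Proposition \ref{holim_initial} now yields $R\pi_*A(i) \simeq \holim_{F(i)} A \circ \iota_i$.

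Composing the two equivalences gives the desired natural weak equivalence $\holim_{\cat I \int^R F} A \simeq \holim_{i \in \cat I^\op} \holim_{F(i)} A$. The variances drop out correctly from the construction: an arrow $f \colon j \to i$ in $\cat I$ becomes $i \to j$ in $\cat I^\op$, and induces, via pullback along $F(f)$ combined with the defining data $x \to F(f)(y)$, the structure map $\holim_{F(i)} A \to \holim_{F(j)} A$ needed to form the outer homotopy limit. The main obstacle is the variance bookkeeping — since $\cat I \int^R F$ carries a built-in twist, the base of the iterated $\holim$ lands in $\cat I^\op$ rather than $\cat I$, and right Kan extensions use the undercategory rather than the overcategory. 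Once these conventions are lined up, the crucial combinatorial step $(\iota_i \downarrow b) \cong F(i) \downarrow F(g)(y)$ is immediate from the definitions, and Proposition \ref{holim_initial} plus Fubini finishes the proof.
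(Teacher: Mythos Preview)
Your argument is correct. The paper itself does not supply a proof of this proposition; it simply writes ``We will not give a proof of this since Schlichtkrull gives an excellent one in \cite{schlicht}.'' So there is no paper-internal argument to compare against. Your approach---factor through the projection $\pi\colon \cat I\int^R F \to \cat I^\op$, invoke the transitivity/Fubini property of homotopy right Kan extensions, and then verify that the fiber inclusion $\iota_i\colon F(i)\hookrightarrow (i\downarrow\pi)$ is homotopy initial via Proposition~\ref{holim_initial}---is a standard and clean way to obtain this result. The overcategory computation $(\iota_i\downarrow b)\cong F(i)/F(g)(y)$ is exactly right, and the variance bookkeeping you flag (base lands in $\cat I^\op$, undercategories rather than overcategories) is handled correctly.

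The one place a reader might pause is your appeal to the Fubini identity $\holim_{\cat E} A \simeq \holim_{\cat B} R\pi_* A$ with $R\pi_*A(b)=\holim_{(b\downarrow\pi)} A$. This is indeed standard (transitivity of homotopy right Kan extensions along $\cat E\to\cat B\to *$, or a direct cosimplicial comparison as you indicate), but since the paper works with the explicit Bousfield--Kan model it would not hurt to point to a specific reference for it; Hirschhorn or Dugger's primer both contain versions. In any case this is not a gap, just a citation issue, and your proof is more self-contained than what the paper offers.
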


We will not give a proof of this since Schlictkrull gives an excellent one in \cite{schlicht}.

In this paper, we will come upon several homotopy limits that are indexed by forwards Grothendieck constructions $\cat I \int F$ instead of reverse ones.
Here we will demonstrate that such a homotopy limit splits, but the result is more complicated.
\begin{df}\label{twisted_arrow}
If $\cat I$ is a small category, the \emph{twisted arrow category} $a\cat I$ has as its objects the arrows $i \ra j$ of $\cat I$.
The morphisms from $i \ra j$ to $k \ra \ell$ are the factorizations of $k \ra \ell$ through $i \ra j$:
\[ \xymatrix{
i \ar[d] & k \ar[d] \ar[l] \\ j \ar[r] & \ell } \]
\end{df}

\begin{prop}\label{holim_twisted_split}
Given a diagram $A: \cat I \int F \ra \mc T$ there is a natural weak equivalence
\[ \underset{\cat I \int F}\holim\, A \overset\sim\ra \underset{(i \overset{f}\rightarrow j) \in a\cat I}\holim\,\left( \underset{F(i)}\holim\, A \circ F(f) \right) \]
\end{prop}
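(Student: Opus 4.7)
The plan is to construct a functor between Grothendieck constructions, show it is homotopy initial, and then combine Propositions~\ref{holim_initial} and~\ref{dual_Thomason}. Define $\tilde F : (a\cat I)^\op \to \cat{Cat}$ by $\tilde F(i \xrightarrow{f} j) = F(i)$; on a morphism from $(i\to j)$ to $(i'\to j')$ in $(a\cat I)^\op$ (equivalently, a pair $(a : i' \to i,\, b : j \to j')$ in $a\cat I$ with $f' = b \circ f \circ a$), $\tilde F$ acts as $F(a) : F(i') \to F(i)$, and one checks this is covariantly functorial. Then define
\[ \Theta : (a\cat I)^\op \int^R \tilde F \longrightarrow \cat I \int F, \qquad \Theta\bigl((i \xrightarrow{f} j),\, x\bigr) = \bigl(j,\, F(f)(x)\bigr), \]
sending a morphism with data $(a,\, b,\, h : x \to F(a)(x'))$ to the morphism $(b,\, F(b\circ f)(h))$ in $\cat I \int F$; this is well-defined and functorial. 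Granted $\Theta$ is homotopy initial, Prop.~\ref{holim_initial} gives $\holim_{\cat I \int F} A \simeq \holim_{(a\cat I)^\op \int^R \tilde F} A \circ \Theta$, and Prop.~\ref{dual_Thomason} rewrites the right-hand side as the desired iterated homotopy limit $\holim_{a\cat I} \holim_{F(i)} A \circ F(f)$.

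The key is to show $(\Theta \downarrow (k, w))$ has contractible nerve for every $(k, w) \in \cat I \int F$. An object is a tuple $((i \xrightarrow{f} j),\, x,\, \phi : j \to k,\, \gamma : F(\phi \circ f)(x) \to w)$, and I collapse this category in two stages using deformation retracts. First, define an endofunctor $R$ on $(\Theta \downarrow (k, w))$ by
\[ R\bigl((i \to j),\, x,\, \phi,\, \gamma\bigr) = \bigl((i \xrightarrow{\phi \circ f} k),\, x,\, \id_k,\, \gamma\bigr). \]
The morphism $X \to R(X)$ with data $(\id_i,\, \phi,\, \id_x)$ assembles into a natural transformation $\id \Rightarrow R$, and $R$ takes values in and restricts to the identity on the full subcategory $\cat F \subset (\Theta \downarrow (k,w))$ of objects with $j = k$ and $\phi = \id_k$; this exhibits $\cat F$ as a deformation retract, so $|\cat F| \simeq |(\Theta \downarrow (k,w))|$. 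Second, identify $\cat F$ with the reverse Grothendieck construction $(\cat I \downarrow k) \int^R G$, where $G(i,\chi)$ is the comma category of pairs $(x \in F(i),\, \gamma : F(\chi)(x) \to w)$. The inclusion $\iota_0 : (F(k) \downarrow w) \hookrightarrow \cat F$, $(y,\delta) \mapsto ((k, \id_k), y, \delta)$, admits a retraction $\rho : (i, \chi, x, \gamma) \mapsto (F(\chi)(x), \gamma)$ together with a natural transformation $\iota_0 \rho \Rightarrow \id$ whose component at $(i, \chi, x, \gamma)$ is the morphism with data $(\chi,\, \id_{F(\chi)(x)})$. Since $(F(k) \downarrow w)$ has the terminal object $(w, \id_w)$ and is hence contractible, so is $|\cat F|$ and therefore $|(\Theta \downarrow (k, w))|$.

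The main obstacle is the bookkeeping: the various opposite categories, covariance conventions, and twisted-arrow morphism data interact subtly, so one must verify carefully that $\tilde F$, $\Theta$, $R$, and $\rho$ are genuine functors, that the two natural transformations are indeed natural, and that the identification $\cat F \cong (\cat I \downarrow k) \int^R G$ is a strict isomorphism of categories rather than only an equivalence. Once these conventions are pinned down, the homotopy-initiality of $\Theta$ reduces to the two deformation retracts described above together with the trivial observation that $(F(k) \downarrow w)$ has a terminal object.
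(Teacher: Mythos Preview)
Your approach is essentially identical to the paper's: you build the same functor $\Theta$ (the paper calls it $\alpha$) from $(a\cat I)^\op \int^R \tilde F$ to $\cat I \int F$, reduce to showing it is homotopy initial, and then contract each overcategory in the same two stages---first retracting onto the subcategory $\cat F$ where $j=k$ and $\phi=\id_k$ (the paper's $\cat J$, via a left adjoint to the inclusion), then retracting $\cat F$ onto $(F(k)\downarrow w)$ (the paper's $\cat K$), which has a terminal object. The only caveat is that your stated conventions for morphisms in $(a\cat I)^\op$ have the arrows $a,b$ pointing the wrong way relative to the paper's Definition~\ref{twisted_arrow}; this is exactly the bookkeeping you flag in your final paragraph, and once straightened out the argument goes through as written.
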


\begin{rmk}
This proposition is motivated by a result of Dwyer and Kan on function complexes \cite{dwyerkanfunction}.
Roughly, the left-hand side is the space of maps between two diagrams indexed by $\cat I$.
The first diagram sends $i$ to the nerve of $F(i)$, while the other sends $i$ to $A(i)$.
Mapping spaces of this form, if they are ``homotopically correct,'' are equivalent to a homotopy limit of mapping spaces $\Map(NF(i), A(j))$ over the twisted arrow category $a\cat I$; this is roughly what we get on the right-hand side.
\end{rmk}

\begin{proof}
Recall that we already have a functor $F: \cat I \ra \cat{Cat}$.
Define another functor $(a\cat I)^\op \ra \cat{Cat}$ by taking $i \ra j$ to $F(i)$, and call this functor $F$ by abuse of notation.
Then we can build the reverse Grothendieck construction $(a\cat I)^\op \int^R F$.

The desired weak equivalence is the composite
\[ \underset{\cat I \int F}\holim\, A \simar
\underset{(a\cat I)^\op \int^R F}\holim\, A \circ \alpha \simar
\underset{(i \overset{f}\rightarrow j) \in a\cat I}\holim\,\left( \underset{F(i)}\holim\, A \circ F(f) \right) \]
The second map is a weak equivalence by the dual of Thomason's theorem, stated above.
The first map is induced by pullback along a functor
\[ (a\cat I)^\op \int^R F \overset\alpha\ra \cat I \int F \]
and it suffices to show that this functor is homotopy initial.
Specifically, $\alpha$ does the following to objects and morphisms:
\[ \xymatrix{
(i \ar[r]^-{f} & j, \ar[dd]^-{h} & x \in F(i)) \ar[d]^-\varphi \ar@{~>}[rr]^-\alpha && (j, \ar[dd]^-{h} & F(f)(x) \in F(j) \ar@{~>}[d]^-{F(h)} ) \\
&& F(g)(x') \in F(i) && & F(hf)(x) \in F(j') \ar[d]^-{F(hf)\varphi} \\
(i' \ar[r]^-{f'} \ar[uu]_-{g} & j', & x' \in F(i')) \ar@{~>}[u]_-{F(g)} \ar@{~>}[rr]^-\alpha && (j', & F(f')(x') \in F(j')) } \]
Fix an object $(\ell, z \in F(\ell))$ in the target category $I \int F$.
We'll show that the overcategory $(\alpha \downarrow (\ell,z))$ is contractible.
A typical map between objects of this overcategory is given by the data
\[ \xymatrix{
i \ar[r]^-{f} & j \ar[r]^-{p} \ar[dd]^-{h} & \ell, \ar@{=}[dd] & x \in F(i) \ar[d]^-\varphi \ar@{~>}[r]^-{F(pf)} & F(pf)(x) \ar[dd]^-{F(pf)\varphi} \ar[r]^-\sigma & z \ar@{=}[dd] \\
&&& F(g)(x') \in F(i)  \ar@{~>}[rd]^-{F(pf)} & \\
i' \ar[r]^-{f'} \ar[uu]_-{g} & j' \ar[r]^-{p'} & \ell, & x' \in F(i') \ar@{~>}[u]_-{F(g)} \ar@{~>}[r]^-{F(p'f')} & F(p'f')(x') \ar[r]^-{\sigma'} & z } \]
where everything commutes.
Let $\cat J$ be the subcategory of $(\alpha \downarrow (\ell,z))$ consisting of objects for which $j = \ell$ and $p$ is the identity.
Then there is a projection $P: (\alpha \downarrow (\ell,z)) \ra \cat J$ which is left adjoint to the inclusion $I: \cat J \ra (\alpha \downarrow (\ell,z))$.
We can exhibit $P$ and the natural transformation from the identity to $I \circ P$ in the following diagram:
\[ \xymatrix{
i \ar[r]^-{f} & j \ar[r]^-{p} \ar[dd]^-{p} & \ell, \ar@{=}[dd] & x \in F(i) \ar@{=}[d] \ar@{~>}[r]^-{F(pf)} & F(pf)(x) \ar@{=}[dd] \ar[r]^-\sigma & z \ar@{=}[dd] \\
&&& x \in F(i) \ar@{~>}[rd]^-{F(pf)} & \\
i \ar[r]^-{pf} \ar@{=}[uu] & \ell \ar@{=}[r] & \ell, & x \in F(i) \ar@{~>}[u]_-{\id} \ar@{~>}[r]^-{F(pf)} & F(pf)(x) \ar[r]^-{\sigma} & z } \]
To check the adjunction, it suffices to check that a map from any object of $(\alpha\downarrow(\ell,z))$ into an object of $\cat J$ factors uniquely through this projection.
Once this is checked, the next step is to show that $\cat J$ has an initial subcategory $\cat K$.
A typical object of $\cat K$ is given in the first row below.
\[ \xymatrix{
\ell \ar@{=}[r] & \ell \ar@{=}[r] \ar@{=}[dd] & \ell, \ar@{=}[dd] & F(f)(x) \in F(\ell) \ar@{=}[d] \ar@{~>}[r]^-{\id} & F(f)(x) \ar@{=}[dd] \ar[r]^-\sigma & z \ar@{=}[dd] \\
&&& F(f)(x) \in F(\ell) \ar@{~>}[rd]^-{\id} & \\
i \ar[r]^-{f} \ar[uu]_-{f} & \ell \ar@{=}[r] & \ell, & x \in F(i) \ar@{~>}[u]_-{F(f)} \ar@{~>}[r]^-{F(f)} & F(f)(x) \ar[r]^-{\sigma} & z } \]
The rest of the diagram justifies the claim that $\cat K$ is initial.
Finally, $\cat K$ is isomorphic to the category of objects over $z$ in $F(\ell)$, which has terminal object $z$.
We have completed a zig-zag of adjunctions between $(\alpha \downarrow (\ell,z))$ and $*$, so $(\alpha \downarrow (\ell,z))$ is contractible.
Therefore $\alpha$ is homotopy initial and the equivalence is complete.

The equivalence is clearly natural in $A$, but it is also natural in $F$ in the following sense.
A map of diagrams of categories $F \overset{\eta}\ra G$ gives a map $\cat I \int F \overset{\cat I \int \eta}\ra \cat I \int G$, so a diagram $A: \cat I \int G \ra \mc T$ can be pulled back to $\cat I \int F$.
Our equivalence then fits into a commuting square:
\[ \xymatrix{
\underset{\cat I \int F}\holim\, (\cat I \int \eta)^*A \ar[r] &
\underset{(i \overset{f}\rightarrow j) \in a\cat I}\holim\,\left( \underset{F(i)}\holim\, ((\cat I \int \eta)^*A) \circ F(f) \right) \\
\underset{\cat I \int G}\holim\, A \ar[r] \ar[u] &
\underset{(i \overset{f}\rightarrow j) \in a\cat G}\holim\,\left( \underset{G(i)}\holim\, A \circ G(f) \right) \ar[u] } \]
\end{proof}

Lastly, we want a result on diagrams $A: \cat J \ra \mc T$ for which every arrow $i \ra j$ induces a weak equivalence $A(i) \ra A(j)$.
Call such a diagram \emph{almost constant}.
Of course, if $A$ is a constant diagram sending everything to the space $X$, then its homotopy limit is
\[ \underset{\cat J}\holim\, A = \Map(B \cat J,X) \]
where $B\cat J = |N\cat J|$ is the classifying space of $\cat J$.
If $A$ is instead almost constant, then we get (see \cite{cohen2009umkehr}, \cite{dwyerBG})

\begin{prop}
If $A: \cat J \ra \mc T$ is almost constant, then there is a fibration $E_A \ra B\cat J$ and a natural weak equivalence
\[ \underset{\cat J}\holim\, A \simeq \Gamma_{B\cat J}(E_A) \]
Moreover, if $\cat I \overset{\alpha}\ra \cat J$ is a functor then there is a homotopy pullback square
\[ \xymatrix{
E_{A \circ \alpha} \ar[r] \ar[d] & E_A \ar[d] \\
B\cat I \ar[r] & B\cat J } \]
\end{prop}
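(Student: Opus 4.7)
The plan is to realize $A$ via a Grothendieck-type construction and identify $\holim_\cat J A$ with the sections of an associated fibration. Concretely, I would first build
\[ E_A := |B(*, \cat J, A)| \]
the geometric realization of the two-sided bar construction, whose $p$-simplices are $\coprod_{j_0 \to \cdots \to j_p} A(j_0)$ with the obvious face and degeneracy maps. Collapsing each $A(j_0)$ to a point gives a natural projection $\pi_A: E_A \to B\cat J$, and post-composing with a functorial Moore path replacement over $B\cat J$ converts $\pi_A$ into a Hurewicz fibration. This will be the desired $E_A \to B\cat J$.

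Next I would identify the homotopy fiber of $\pi_A$ over each vertex of $B\cat J$ with the corresponding $A(j)$. The almost-constancy hypothesis on $A$ is exactly what makes Quillen's Theorem B (or its quasifibration refinement used by May--Thomason and by Dwyer--Kan) apply: each arrow $j \to j'$ of $\cat J$ induces a weak equivalence on fibers, so $\pi_A$ is a quasifibration whose fiber over $[j]$ is canonically equivalent to $A(j)$.

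Third, I would identify $\holim_\cat J A$ with $\Gamma_{B\cat J}(E_A)$. For a \emph{constant} diagram with value $X$ this is the standard identification $\holim_\cat J X = \Map(B\cat J, X) = \Gamma_{B\cat J}(B\cat J \times X)$. For the almost constant case one writes out both sides simplex-by-simplex using the definition of $\holim$ given in the paper: a point of $\holim_\cat J A$ is a coherent family of maps $\Delta^n_+ \to A(g(n))$ indexed by strings $g: \Delta[n] \to N\cat J$, and exactly the same data describe a section of $E_A \to B\cat J$ over the corresponding simplex of $B\cat J$. Naturality of this identification in $A$ is clear from the construction.

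Finally, given $\alpha: \cat I \to \cat J$, the bar construction is functorial: $B(*, \cat I, A \circ \alpha)$ is the degreewise strict pullback of $B(*, \cat J, A)$ along $N\alpha$, so on realizations one obtains a strict pullback square which, after choosing the Moore-path replacement naturally over the base, becomes the claimed homotopy pullback square. The main obstacle I anticipate is step two, namely upgrading $\pi_A$ from a quasifibration to a Hurewicz fibration in a way that is simultaneously functorial in $A$ and compatible with pullback along $\alpha$. The fiberwise path-space construction over $B\cat J$ handles this in a standard way, but keeping the bookkeeping straight so that both the weak equivalence $\holim_\cat J A \simeq \Gamma_{B\cat J}(E_A)$ and the pullback square hold on the nose is the only delicate point.
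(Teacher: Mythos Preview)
The paper does not actually prove this proposition; it simply cites \cite{cohen2009umkehr} and \cite{dwyerBG}. Your outline follows essentially the standard argument found in those references: build $E_A$ as the realization of the two-sided bar construction, invoke Quillen's Theorem~B to see that $\pi_A$ is a quasifibration when $A$ is almost constant, replace by a Hurewicz fibration, and use functoriality of the bar construction for the pullback square. So at the level of strategy you are on the right track and in agreement with the sources the paper defers to.

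There is one point in your sketch that is too quick. In step~4 you claim that a point of $\holim_{\cat J} A$, namely a coherent family $\Delta^n \to A(g(n))$, is ``exactly the same data'' as a section of $E_A \to B\cat J$. This is not literally true: with your bar construction the fibre of $E_A$ over the open simplex indexed by $j_0 \to \cdots \to j_n$ is $A(j_0)$, whereas the Bousfield--Kan $\holim$ (in the paper's convention) records maps into $A(j_n)$. Only one of these conventions gives a simplicial object and only the other gives a cosimplicial object, so you cannot simply swap them. The honest argument produces a natural \emph{weak equivalence} rather than an identification --- for instance by showing both sides are homotopy invariant in $A$ and then comparing via the evaluation pairing $\holim_{\cat J} A \times \hocolim_{\cat J} * \to \hocolim_{\cat J} A$, or by checking that the cosimplicial space computing $\Gamma_{B\cat J}(E_A)$ and the cosimplicial replacement computing $\holim_{\cat J} A$ are related by a levelwise equivalence. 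This is routine but not a tautology, so your step~4 should acknowledge that a short argument is required rather than asserting an equality of data.

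Your concern about the bookkeeping in step~5 is well placed but resolvable: since the bar construction square is a strict pullback and the right vertical becomes a fibration after replacement, pulling back the replaced fibration along $B\alpha$ gives a fibration over $B\cat I$ weakly equivalent to the intrinsic $E_{A\circ\alpha}$; this is all that is needed for the homotopy pullback statement.
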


\begin{cor}\label{holim_ac}
If $A: \cat J \ra \mc T$ is almost constant, and $\cat I \overset{\alpha}\ra \cat J$ induces a weak equivalence $B\cat I \ra B\cat J$, then the natural map
\[ \underset{\cat J}\holim\, A \ra \underset{\cat I}\holim\, (A \circ \alpha) \]
is a weak equivalence.
\end{cor}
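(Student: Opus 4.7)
The plan is to deduce this directly from the preceding proposition, which represents homotopy limits of almost constant diagrams as section spaces of fibrations. First, I would apply that proposition twice: to $A$ over $\cat J$, producing a fibration $E_A \to B\cat J$ with $\holim_\cat J A \simeq \Gamma_{B\cat J}(E_A)$, and to the pulled-back diagram $A \circ \alpha$ over $\cat I$ (which is still almost constant, since weak equivalences are preserved under $\alpha$), producing $E_{A\circ\alpha} \to B\cat I$ with $\holim_\cat I (A\circ\alpha) \simeq \Gamma_{B\cat I}(E_{A\circ\alpha})$. The natural map in the statement corresponds, under these identifications, to the map on section spaces induced by pullback along $\alpha$.

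Next, I would invoke the homotopy pullback square from the preceding proposition,
\[ \xymatrix{
E_{A \circ \alpha} \ar[r] \ar[d] & E_A \ar[d] \\
B\cat I \ar[r]^-{B\alpha} & B\cat J } \]
to identify $E_{A \circ \alpha}$, up to equivalence of fibrations over $B\cat I$, with the honest pullback of $E_A$ along $B\alpha$. Since $B\alpha$ is assumed to be a weak equivalence, the task reduces to the purely fibrational statement: if $p: E \to B$ is a Serre fibration and $f: B' \to B$ is a weak equivalence, then the induced map $\Gamma_B(E) \to \Gamma_{B'}(f^*E)$ is a weak equivalence.

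This last fact is standard; I would verify it by the usual lifting argument. The space $\Gamma_B(E)$ can be computed as the homotopy limit (in $\mc T / B$) of the constant diagram $E$ over $B$, and likewise for $\Gamma_{B'}(f^*E)$. One can either appeal directly to the fact that the Serre model structure on spaces over $B$ is invariant under weak equivalences of the base (so pullback along $f$ is a Quillen equivalence $\mc T/B \simeq \mc T/B'$, which on fibrant-cofibrant objects preserves mapping spaces and hence sections), or present $\Gamma_B(E)$ as $\Map_B(B, E)$ and use that $f: B' \to B$ is a weak equivalence between cofibrant objects in $\mc T/B$, which by Ken Brown's lemma yields a weak equivalence on mapping spaces into the fibrant object $E \to B$.

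The only real obstacle is this last step, and it is mild: one has to know that $\Gamma_B(E)$ is homotopy invariant in $B$ for fibrations $E \to B$. With that in hand, combining the two identifications and the homotopy pullback square produces the desired weak equivalence, and naturality of all the constructions in $A$ ensures the result is the map in the corollary.
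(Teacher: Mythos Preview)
Your proposal is correct and follows exactly the route the paper intends: the corollary is stated without proof as an immediate consequence of the preceding proposition, and your argument---identify both homotopy limits as section spaces via the proposition, use the homotopy pullback square to recognize $E_{A\circ\alpha}$ as the pullback of $E_A$ along $B\alpha$, then invoke homotopy invariance of section spaces under a weak equivalence of (CW) bases---is precisely the intended unpacking.
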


\section{Second construction of $P_n F$: the higher coassembly map}\label{secondconstruction}

Here we will describe how to construct $P_n F(X)$ as a homotopy limit
\[ P_n F(X) = \underset{\Delta^p \times \il \ra X}\holim\, F(\il \times \Delta^p) \]
When $n = 1$ and $F$ is reduced, this construction is essentially the same as the \emph{coassembly map} described in \cite{cohen2009umkehr}.
The coassembly map is formally dual to the \emph{assembly map} (\cite{weiss1993assembly}) often found in treatments of algebraic K-theory.

We will prove that our construction of $P_n F$ satisfies four properties:
\begin{enumerate}
\item $P_n F$ is a homotopy functor.
\item $P_n F$ is $n$-excisive.
\item If $X$ is a CW complex then
\[ P_n F(X) \ra \underset{X' \subset X \textup{ finite complex}}\holim\, P_n F(X') \]
is an equivalence.
\item $F \ra P_n F$ is an equivalence on $\mc R_{B,n}^\op$ or $\mc S_{B,n}^\op$.
\end{enumerate}
For functors on finite CW complexes, conditions (1), (2), and (4) are enough to imply $P_n F$ is the universal $n$-excisive approximation of $F$.
Condition (3) is a bit weaker than the standard condition that filtered homotopy colimits go to homotopy limits; it is here because the technology we need for (2) happens to make (3) easy.

There are 8 different setups we might consider, based on whether $B$ is a point or not, the spaces over $B$ are fiberwise based (retractive) or unbased, and $F$ goes into spaces or spectra.
We will first handle all cases where the spaces over $B$ are unbased.
Then we'll handle all cases where $B = *$ and the spaces over $B$ are based.
Together this gives an extension and a second proof of Theorem \ref{final} above:
\begin{thm}\label{final_second}
If $F: \mc C \ra \mc D$ is a homotopy functor, where $\mc C = \mc S_{B,\fin}$ or $\mc T_{\fin}$ and $\mc D = \mc T$ or $\mc Sp$, then there is a universal $n$-excisive approximation $P_n F$, and $F \ra P_n F$ is an equivalence on spaces with at most $n$ points.
\end{thm}

Finally, in section \ref{hack} below we will do the case of functors from retractive spaces over $B$ to spectra, since the methods we have developed here seem to break down in the case of retractive spaces over $B$ to spaces.

\subsection{$P_n F$ for unbased spaces over $B$}\label{second_construction_unbased}

Let $\cat C_{B,n}$ denote a subcategory of simplicial sets over $S_\cdot B$ consisting of objects of the form
\[ \il \times \Delta[p], \qquad p \geq 0 \text{ and } 0 \leq i \leq n. \]
Specifically, we take one such object for each choice of $p$ and $i$, \emph{and} each choice of map of simplicial sets $\il \times \Delta[p] \ra S_\cdot B$.
We do \emph{not} take the full subcategory on these objects.
Each map
\[ \jl \times \Delta[q] \ra \il \times \Delta[p] \]
must be a product of an injective simplicial map $\Delta[q] \ra \Delta[p]$ and a map of finite sets $\jl \ra \il$.
Intuitively, $\cat C_{B,n}$ is a simplicial fattening of $\mc S_{B,n}$.

Now let $F$ be any contravariant homotopy functor from unbased spaces over $B$ to spaces or spectra.
If $F$ is a functor to spectra, compose it with fibrant replacement.
This gives an equivalent functor that takes weak equivalences of spaces to level equivalences of spectra, and we can argue one level at a time.
So now without loss of generality, $F$ is a homotopy functor to based spaces.

If $X_\cdot$ is a simplicial set over $S_\cdot B$, define
\[ P_n F(X_\cdot) = \underset{(\cat C_{B,n} \downarrow X_\cdot)^\op}\holim\, F(\il \times \Delta^p) \]
Abusing notation, define $P_n F$ on spaces as the composite
\[ \mc S_B \overset{S_\cdot}\ra \cat{sSet}/{S_\cdot B} \overset{P_n F}\ra \mc T \]
or more explicitly,
\[ P_n F(X) = \underset{(\cat C_{B,n} \downarrow S_\cdot X)^\op}\holim\, F(\il \times \Delta^p) \]
The natural transformation $F \overset{p_n}\ra P_n F$ is then induced by a collection of maps $F(X) \ra F(\il \times \Delta^p)$ for each map $\il \times \Delta^p \ra X$.

When $X = \il$ and $i \leq n$, we may take the subcategory $\cat I \subseteq (\cat C_{B,n} \downarrow S_\cdot X)^\op$ of objects of the form $\il \times \Delta[p] \ra S_\cdot \il$, which on connected components give the identity map of $\il$.
Then $\cat I$ is a homotopy initial subcategory, because for a fixed object $\jl \times \Delta[q] \ra S_\cdot \il$, the overcategory $(\cat I \downarrow \jl \times \Delta[q])$ has terminal object $\il \times \Delta[q]$ and is therefore contractible.
On the other hand, $\cat I$ is itself equivalent to the category of simplices $\Delta_{*}$, whose classifying space is contractible.
Combining Prop. \ref{holim_initial} and \ref{holim_ac}, we get that the above homotopy limit is obtained by evaluating at $\il \times \Delta[0]$.
This proves property (4), that $F \ra P_n F$ is an equivalence on $\mc S_{B,n}^\op$.

Next we'll tackle property (1), that $P_n F$ is a homotopy functor.
Let $\mc S_n = \mc S_{*,n}$ be the category of finite unbased sets $\uline{0},\ldots,\uline{n}$ and all maps between them.
Notice that we can define a functor $\Delta: \mc S_n^\op \ra \cat{Cat}$ taking $\il$ to $\Delta_{X_\cdot^i}$.
Each map $\il \la \jl$ goes to the functor $\Delta_{X_\cdot^i} \ra \Delta_{X_\cdot^j}$ arising from the map $X_\cdot^i \ra X_\cdot^j$, whose definition is obvious once we observe that $X^i \cong \Map(\il,X)$.
Now take the forwards Grothendieck construction $\mc S_n^\op \int \Delta$.
This is a category whose objects are elements $X_p^i$ and whose morphisms $X_p^i \ra X_q^j$ are compositions of maps $X_\cdot^i \ra X_\cdot^j$ from above and maps $X_p^j \ra X_q^j$ which are compositions of face maps.
Equivalently, the objects can be described as maps
\[ \Delta[p] \times \il \ra X_\cdot \]
and the morphisms are factorizations
\[ \xymatrix{
\Delta[p] \times \il \ar[r] & X_\cdot \ar@{=}[d] \\
\Delta[q] \times \jl \ar[r] \ar[u] & X_\cdot } \]
in which the vertical map is a product of $\jl \ra \il$ and some injective simplicial map $\Delta[q] \ra \Delta[p]$.
This is clearly the same category as $(\cat C_{B,n}^\op \downarrow X_\cdot)^\op$, so we have a new way to write our definition of $P_n F(X_\cdot)$:
\[ P_n F(X_\cdot) = \underset{\mc S_n^\op \int \Delta}\holim\, F(\il \times \Delta^p) \]

Now Prop. \ref{holim_twisted_split} gives the following:
\[ \underset{\mc S_n \int \Delta_{X^i}}\holim\, F(\Delta^p \times \il) \simeq \underset{(\il \la \jl) \in a\mc S_n^\op}\holim\, \left( \underset{\Delta_{X_\cdot^i}}\holim\, F(\jl \times \Delta^p) \right) \]
The term inside the parentheses defines a homotopy functor in $X_\cdot$ by Prop. \ref{holim_ac}.
The homotopy limit of these is also a homotopy functor, and using the naturality statement in Prop. \ref{holim_twisted_split} we conclude that $P_n F(X_\cdot)$ is a homotopy functor.
In fact, we have proven something stronger than (1), that $P_n F$ actually takes weak equivalences of simplicial sets to weak equivalences.

Now we can prove (2).
From \cite{calc2}, each strongly co-Cartesian cube of spaces over $B$ is weakly equivalent to a pushout cube formed by a cofibrant space $A$ and an $(n+1)$-tuple of spaces $X_0$, $\ldots$, $X_n$ over $B$, each with a cofibration $A \ra X_i$.
Applying singular simplices $S_\cdot$, we get a cube of simplicial sets
\[ T \leadsto S_\cdot \left(\bigcup_{s \in T} X_s\right) \]
where the $\bigcup$ is shorthand for pushout of spaces along $A$.
By easy induction, this cube is equivalent to the pushout cube of simplicial sets
\[ T \leadsto \bigcup_{s \in T} S_\cdot X_s \]
where the $\bigcup$ is shorthand for pushout of simplicial sets along $S_\cdot A$.
Since $P_n F$ is a homotopy functor on simplicial sets, applying $P_n F$ to both cubes gives equivalent results.
Therefore it suffices to show that $P_n F$ takes a pushout cube of simplicial sets to a Cartesian cube of spaces.

So let $S$ by any set with cardinality strictly larger than $n$, let $A \in \cat{sSet}$ be a simplicial set, and for each element $s \in S$, let $X_s \in \cat{sSet}$ be a simplicial set containing $A$.
Then there is a pushout cube which assigns each subset $T \subset S$ to the simplicial set $\bigcup_{t \in T} X_t$, which is shorthand for the pushout of the $X_t$ along $A$.
We want to show that $P_n F$ takes this to a Cartesian cube.
In other words, the natural map
\[ \underset{\Delta[p] \times \il \rightarrow \bigcup_S X_s}\holim\, F(\il \times \Delta^p) \ra \underset{(T \subsetneq S)^\op}\holim\, \left(\underset{\Delta[p] \times \il \rightarrow \bigcup_T X_s}\holim\, F(\il \times \Delta^p)\right) \]
is an equivalence.
Using dual Thomason, we rewrite the right-hand side as
\[ \underset{(T,\Delta[p] \times \il \rightarrow \bigcup_T X_s)}\holim\, F(\il \times \Delta^p) \]
where each object of the indexing category is a proper subset $T \subsetneq S$, integers $p \geq 0$ and $0 \leq i \leq n$, and a map $\Delta[p] \times \il \rightarrow \bigcup_T X_s$.
A map between two objects looks like
\[ \xymatrix{
T, & \il \times \Delta[p] \ar[r] & \bigcup_T X_s \\
U, \ar[u]^*\txt{subset} & \jl \times \Delta[q] \ar[r] \ar[u] & \bigcup_U X_s \ar[u] } \]
This category maps forward into $\mc S_n^\op \int \Delta_{(\bigcup_S X_s)^i}$, in which a map between two objects is given by the data
\[ \xymatrix{
\il \times \Delta[p] \ar[r] & \bigcup_S X_s \\
\jl \times \Delta[q] \ar[r] \ar[u] & \bigcup_S X_s \ar[u] } \]
This functor $\alpha$ forgets the data of $T$ and includes $\bigcup_T X_s^i$ into $\bigcup_S X_s^i$.
The natural map of homotopy limits
\[ \underset{\Delta[p] \times \il \rightarrow \bigcup_S X_s}\holim\, F(\il \times \Delta^p) \ra \underset{(T,\Delta[p] \times \il \rightarrow \bigcup_T X_s)}\holim\, F(\il \times \Delta^p) \]
is induced by a pullback of diagrams along $\alpha$, so we just have to show that $\alpha$ is homotopy initial.
Given an object $\jl \times \Delta[q] \overset\varphi\ra \bigcup_S X_s$ in the target category, the overcategory $(\alpha \downarrow \varphi)$ has as its objects the factorizations of $\varphi$
\[ \jl \times \Delta[q] \ra \il \times \Delta[p] \ra \bigcup_T X_s \ra \bigcup_S X_s \]
where $T \subsetneq S$ must be a \emph{proper} subset of $S$.

Let us give a terminal object for this overcategory.
Since we are working with simplicial sets instead of spaces, each $q$-simplex lands \emph{inside} one of the sets $X_s$ in the pushout.
Therefore there is a smallest subset $T \subset S$ such that $\jl \times \Delta[q] \overset\varphi\ra \bigcup_S X_s$ lands inside $\bigcup_T X_s$, and since $j \leq n < |S|$, this subset is proper.
This gives a terminal object for the overcategory $(\alpha \downarrow \varphi)$, so it is contractible, which finishes (2).

Finally we check (3).
Let $X$ be a CW complex.
We want to show that the natural map
\[ \underset{\Delta[p] \times \il \rightarrow S_\cdot X}\holim\, F(\il \times \Delta^p) \ra \underset{(\textup{finite}\, X' \subset X)^\op}\holim\, \left(\underset{\Delta[p] \times \il \rightarrow S_\cdot X'}\holim\, F(\il \times \Delta^p)\right) \]
is an equivalence.
Using dual Thomason, we rewrite the right-hand side as
\[ \underset{(\textup{finite}\, X' \subset X,\Delta[p] \times \il \rightarrow S_\cdot X')}\holim\, F(\il \times \Delta^p) \]
where each object of the indexing category is a finite subcomplex $X' \subset X$, integers $p \geq 0$ and $0 \leq i \leq n$, and a map $\Delta[p] \times \il \rightarrow S_\cdot X'$.
A map between two objects looks like
\[ \xymatrix{
X', & \il \times \Delta[p] \ar[r] & S_\cdot X' \\
X'', \ar[u]^*\txt{inclusion} & \jl \times \Delta[q] \ar[r] \ar[u] & S_\cdot X'' \ar[u] } \]
This category maps forward into $\mc S_n^\op \int \Delta_{(S_\cdot X)^i}$, in which a map between two objects looks like
\[ \xymatrix{
\il \times \Delta[p] \ar[r] & S_\cdot X \\
\jl \times \Delta[q] \ar[r] \ar[u] & S_\cdot X \ar[u] } \]
This functor $\alpha$ forgets the data of $X'$ and includes $X'$ into $X$.
The natural map of homotopy limits defined above is again induced by a pullback of diagrams along $\alpha$, so we just have to show that $\alpha$ is homotopy initial.
Given an object $\jl \times \Delta[q] \overset\varphi\ra S_\cdot X$ in the target category, the overcategory $(\alpha \downarrow \varphi)$ has as its objects the factorizations of $\varphi$
\[ \jl \times \Delta[q] \ra \il \times \Delta[p] \ra S_\cdot X' \ra S_\cdot X \]
where $X' \subset X$ must be a finite subcomplex.
But of course each $q$-simplex lands inside a unique smallest subcomplex; taking the union over all $\jl$ gives a smallest finite subcomplex containing the image of $\Delta^q \times \jl$.
This gives a terminal object for the overcategory $(\alpha \downarrow \varphi)$, so it is contractible and we are done proving (3).

\subsection{$P_n F$ for based spaces}

The argument mimics the one above, so we will only point out what is different.
The category $\cat C_n$ becomes a subcategory of based simplicial sets consisting of objects of the form
\[ (\il \times \Delta[p])_+, \qquad p \geq 0 \text{ and } 0 \leq i \leq n. \]
with one such object for each choice of $p$ and $i$.
Each map
\[ (\jl \times \Delta[q])_+ \ra (\il \times \Delta[p])_+ \]
is a choice of injective simplicial map $\Delta[q] \ra \Delta[p]$ and map of finite based sets $\jl_+ \ra \il_+$.
Intuitively, $\cat C_n$ is a simplicial fattening of $\mc T_n$.
If $X_\cdot$ is a based simplicial set, define
\[ P_n F(X_\cdot) = \underset{(\cat C_n \downarrow X_\cdot)^\op}\holim\, F((\il \times \Delta^p)_+) \]
Abusing notation, define $P_n F$ on spaces as the composite
\[ \mc T \overset{S_\cdot}\ra \cat{sSet}_* \overset{P_n F}\ra \mc Sp \]
The category $\mc S_n$ of finite sets is replaced by the category $\mc T_n$ of finite based sets.
As before, there is a functor $\Delta: \mc T_n^\op \ra \cat{Cat}$ taking $\il_+$ to $\Delta_{X_\cdot^i}$, and we can rewrite $P_n F(X_\cdot)$ as
\[ P_n F(X_\cdot) = \underset{\mc T_n^\op \int \Delta}\holim\, F((\il \times \Delta^p)_+) \]
To show that $P_n F$ is homotopy invariant we rewrite it as
\[ \underset{\mc T_n^\op \int \Delta}\holim\, F(\Delta^p \times \il) \simeq \underset{(\il_+ \la \jl_+) \in a\mc T_n^\op}\holim\, \left( \underset{\Delta_{X_\cdot^i}}\holim\, F((\jl \times \Delta^p)_+) \right) \]
which proves (1). The proof of (2) and (3) is the same as in the unbased case.

\subsection{Difficulties with retractive spaces over $B$}\label{difficult}

The above proof does not work when generalized to retractive spaces over $B$.
We may define $\mc U_{B,n}$ as the subcategory of spaces \emph{under} B consisting of spaces of the form
\[ \il \amalg B, \quad 0 \leq i \leq n \]
So a map $\il \amalg B \ra \jl \amalg B$ must act as the identity on $B$, but the points in $\il$ may map into $\jl$ or anywhere into $B$.
Then we may define $\mc U_{B,n}^\op \int \Delta$, and then define $P_n F$ as a homotopy limit over this category.
The proof of (1), (2) and (3) is then straightforward.
However, our argument for (4) does not work because there aren't enough maps in $\mc U_{B,n}^\op \int \Delta$ to make our desired object initial.

Examining this shortcoming, it seems one must enrich $\mc U_{B,n}^\op$ and use an enriched version of the above theorems on homotopy limits.
This is not entirely straightforward, since in order to define $P_n F$ here, one must deal with the concept of a ``diagram'' that is indexed not by a simplicially enriched category but by a simplicial object in $\cat{Cat}$.
Instead of doing this, we will handle the case of $F: \mc R_{B,\fin}^\op \ra \mc Sp$ in section \ref{hack} using splitting theorems that only hold for functors into spectra.

\section{Spectra and cross effects}\label{cross_def}

From here onwards we will only consider functors from retractive spaces over $B$ to spectra.
In this section the word \emph{spectra} will refer to prespectra, though the arguments will also work for orthogonal spectra as defined in \cite{mmss}.
Let $\fib$ denote homotopy fiber and $\cofib$ denote (reduced) homotopy cofiber.
For spaces, these have the usual definition
\[ \begin{array}{c}
\fib(X \ra Y) = X \times_Y \Map_*(I,Y) \\
\cofib(X \ra Y) = (X \sma I) \cup_X Y
\end{array} \]
and for spectra these definitions are applied to each level separately.
We recall the following standard facts about spectra and splitting.

\begin{prop}
Suppose that $X$, $X'$, and $Y$ are spectra with maps
\[ X \overset{i}\ra Y \overset{p}\ra X' \]
such that $p \circ i$ is an equivalence.
Then there are natural equivalences of spectra
\[ X \vee \fib(p) \simar Y \simar X \times \cofib(i) \]
which also yield an equivalence $\fib(p) \simar \cofib(i)$.
\end{prop}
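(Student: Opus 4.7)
The plan is to establish both equivalences by observing that the hypothesis makes $i$ a split monomorphism and $p$ a split epimorphism in the stable homotopy category, and then to invoke the standard fact that such splittings induce direct sum decompositions of spectra. Throughout I will use that for spectra every cofiber sequence is a fiber sequence and vice versa, so that both $X \overset{i}\ra Y \ra \cofib(i)$ and $\fib(p) \ra Y \overset{p}\ra X'$ yield long exact sequences of homotopy groups in both directions.

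I would first specify the maps. The map $X \vee \fib(p) \ra Y$ is the wedge of $i: X \ra Y$ and the canonical map $\fib(p) \ra Y$; this is manifestly natural. For $Y \ra X \times \cofib(i)$, choose a homotopy inverse $s: X' \ra X$ to $p \circ i$, and take the map with first coordinate $s \circ p: Y \ra X$ and second coordinate the canonical projection $Y \ra \cofib(i)$. In the homotopy category this is well defined independently of the choice of $s$, and hence natural.

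To see the first map is an equivalence, apply the long exact sequence associated to the fiber sequence $\fib(p) \ra Y \overset{p}\ra X'$. Since $p \circ i$ is an equivalence, $p_*: \pi_* Y \ra \pi_* X'$ is split surjective with section $i_* \circ (p \circ i)_*^{-1}$, so the sequence breaks into short exact sequences split via $i_*$. This gives $\pi_* Y \cong \pi_* X \oplus \pi_* \fib(p)$, and this isomorphism is precisely the one induced by the stated map, using that $\pi_*(X \vee \fib(p)) \cong \pi_* X \oplus \pi_* \fib(p)$ for spectra. Dually, $s \circ p$ retracts $i$, so in the long exact sequence of $X \overset{i}\ra Y \ra \cofib(i)$ the map $i_*$ is split injective, the sequence splits, and $Y \ra X \times \cofib(i) \simeq X \vee \cofib(i)$ realizes the splitting at the level of homotopy groups.

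For the final claim, observe that both equivalences single out the same $X$-summand (both arising from $i$), so the induced equivalence between the complementary summands must agree with the canonical composite $\fib(p) \ra Y \ra \cofib(i)$. The principal, though mild, obstacle is verifying that the abstract splittings produced by the long exact sequences are realized by the specific maps in the statement rather than just existing up to some further automorphism; this reduces to a short diagram chase in the stable homotopy category, and goes through because in both cases the splitting on homotopy groups is induced directly by the retraction and section we constructed out of $i$, $p$, and $s$.
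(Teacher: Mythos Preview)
Your proof is correct and follows essentially the same approach as the paper, which offers only the two-sentence hint preceding the proposition: use that $X \vee Y \simar X \times Y$ for spectra, and compare cofiber and fiber sequences. You have simply supplied the details the paper leaves implicit, including the explicit construction of the second map via a homotopy inverse $s$ and the verification that the canonical composite $\fib(p) \ra Y \ra \cofib(i)$ realizes the equivalence of complementary summands.
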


\begin{cor}\label{retractsplit}
If $X$ is a retract of $Y$ then $Y \simeq X \vee Z$ where
\[ Z \simeq \fib(Y \ra X) \simeq \cofib(X \ra Y) \]
\end{cor}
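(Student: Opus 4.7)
The plan is to deduce this immediately from the preceding proposition by taking $X' = X$. Concretely, since $X$ is a retract of $Y$, we have maps $i : X \to Y$ and $r : Y \to X$ whose composite $r \circ i$ is (homotopic to) $\id_X$, which is in particular an equivalence of spectra. Apply the preceding proposition with the second copy of $X$ equal to the first and with $p = r$; this directly yields the natural equivalences
\[ X \vee \fib(r) \simar Y \simar X \times \cofib(i), \]
together with an equivalence $\fib(r) \simar \cofib(i)$.

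To convert the product on the right into a wedge and thereby read off the desired splitting, I would invoke the standard fact recalled just before the proposition, namely that for spectra the natural map $A \vee B \to A \times B$ is an equivalence. Setting $Z := \fib(Y \to X)$ (equivalently $Z \simeq \cofib(X \to Y)$), this gives $Y \simeq X \vee Z$ as claimed.

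There is essentially no obstacle here; the corollary is just the special case $X = X'$ of the proposition, packaged with the wedge-versus-product identification for spectra. The only small point worth flagging is that one should be clear about whether $r \circ i$ is literally the identity or only an equivalence, but either way the hypothesis ``$p \circ i$ is an equivalence'' in the preceding proposition is satisfied, so the argument goes through without modification.
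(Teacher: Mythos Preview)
Your proposal is correct and matches the paper's intended argument: the corollary is stated without proof immediately after the proposition, precisely because it is the special case $X' = X$, $p = r$ together with the wedge-versus-product identification for spectra. There is nothing to add.
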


\begin{cor}\label{smallsplit}
If $X$ is a well-based space then there is a natural equivalence
\[ \Sigma^\infty(X_+) \simeq \Sigma^\infty(X \vee S^0) \]
\end{cor}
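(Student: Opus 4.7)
The plan is to realize $\Sigma^\infty S^0$ as a retract of $\Sigma^\infty X_+$ and apply Cor.~\ref{retractsplit} to split off $\Sigma^\infty X$ as the cofiber.

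First I would exhibit the retraction at the space level. Writing $S^0 = \{s_0,s_1\}$ with basepoint $s_0$ and $X_+ = X \amalg \{+\}$ with basepoint $+$, define a based map $i: S^0 \to X_+$ by $s_1 \mapsto x_0$ (the basepoint of $X$) and a based map $p: X_+ \to S^0$ sending all of $X$ to $s_1$ and $+$ to $s_0$. The composite $p \circ i$ is the identity on $S^0$, so after applying $\Sigma^\infty$ we have a retraction diagram $\Sigma^\infty S^0 \to \Sigma^\infty X_+ \to \Sigma^\infty S^0$ to which Cor.~\ref{retractsplit} applies. This yields a natural equivalence
\[ \Sigma^\infty X_+ \simeq \Sigma^\infty S^0 \vee Z, \qquad Z \simeq \cofib(\Sigma^\infty S^0 \to \Sigma^\infty X_+). \]

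The remaining step is to identify $Z$ with $\Sigma^\infty X$. Since $X$ is well-based, the inclusion $\{x_0\} \hookrightarrow X$ is an $h$-cofibration, and from this it follows that $i: S^0 \inj X_+$ is an $h$-cofibration of based spaces. Consequently the homotopy cofiber of $i$ is naturally equivalent to the strict quotient $X_+/i(S^0)$, which collapses both $+$ and $x_0$ to a point. Since $+$ was already the basepoint, this quotient is naturally homeomorphic to $X$ (based at the image of $x_0$). Applying $\Sigma^\infty$ and using that it preserves homotopy cofibers of $h$-cofibrations of well-based spaces, we get $Z \simeq \Sigma^\infty X$.

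Combining these gives $\Sigma^\infty X_+ \simeq \Sigma^\infty S^0 \vee \Sigma^\infty X$, and since $\Sigma^\infty$ preserves wedges (up to natural equivalence for well-based inputs) this is $\Sigma^\infty(X \vee S^0)$, as desired. The only non-formal ingredient is well-basedness, which is needed precisely to ensure that the map $S^0 \to X_+$ is an $h$-cofibration so that its cofiber computes the homotopy cofiber; this is where I expect the only potential subtlety, though it is really just a matter of bookkeeping with the cofibration hypotheses. The naturality of the equivalence is immediate from the naturality of all the constructions in $X$ (both the retraction and the cofiber identification commute with based maps $X \to Y$ of well-based spaces).
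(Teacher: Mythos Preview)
Your proof is correct and is precisely the argument the paper has in mind: the corollary is stated without proof as an immediate consequence of Cor.~\ref{retractsplit}, and your retraction $\Sigma^\infty S^0 \to \Sigma^\infty X_+ \to \Sigma^\infty S^0$ together with the identification of the cofiber as $\Sigma^\infty X$ (using well-basedness) is exactly how one fills in the details.
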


\begin{cor}\label{dumb}
If $\mc R_B^{(\op)} \overset{F}\ra \mc Sp$ is any covariant or contravariant functor then there is a splitting of functors
\[ F(X) \simeq F(B) \times \overline{F}(X) \]
where $\overline{F}(X)$ can be defined as the fiber of $F(X) \ra F(B)$ or the cofiber of $F(B) \ra F(X)$.
This also holds if $F$ is only defined on finite CW complexes.
\end{cor}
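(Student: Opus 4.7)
The plan is to exploit the fact that $B$ is a zero object of $\mc R_B$: every retractive space $X$ comes equipped with both a section $i_X\colon B \to X$ and a retraction $r_X\colon X \to B$ with $r_X \circ i_X = \id_B$, and these structure maps are natural with respect to morphisms in $\mc R_B$ (by definition, such a morphism is required to commute with them). So the whole splitting should drop out formally from combining this with Corollary \ref{retractsplit}.

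Concretely, I would first apply $F$ to the composite $B \overset{i_X}\ra X \overset{r_X}\ra B$. Regardless of variance, this exhibits $F(B)$ as a retract of $F(X)$: for covariant $F$ the composite $F(B) \overset{F(i_X)}\ra F(X) \overset{F(r_X)}\ra F(B)$ equals $F(\id_B) = \id_{F(B)}$, and for contravariant $F$ the same is true after swapping the roles of $i_X$ and $r_X$. Then applying Corollary \ref{retractsplit} to the resulting retract diagram of spectra, I define $\overline{F}(X)$ as either $\fib(F(X) \ra F(B))$ or $\cofib(F(B) \ra F(X))$; the cited corollary identifies these and produces a natural equivalence $F(X) \simeq F(B) \vee \overline{F}(X)$. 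Since we are working in spectra, $\vee \simeq \times$, yielding the claimed splitting $F(X) \simeq F(B) \times \overline{F}(X)$.

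For naturality in $X$, any morphism $f\colon X \ra Y$ in $\mc R_B$ satisfies $r_Y \circ f = r_X$ and $f \circ i_X = i_Y$, so $F(f)$ commutes with the section/retraction maps into and out of $F(B)$. The fiber and cofiber constructions are then functorial in $X$, making $\overline{F}$ a functor and the equivalence $F \simeq F(B) \times \overline{F}$ a natural equivalence of functors. The restriction to finite CW complexes introduces no extra difficulty, since $B$ itself is an object of $\mc R_{B,\fin}$ (with the trivial zero-cell relative CW structure), so the same argument applies verbatim.

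There is no real obstacle here; the work is entirely in recognizing that $B$ is a zero object of $\mc R_B$ and chasing the resulting retract through $F$, and then invoking the two preceding corollaries to convert ``retract of spectra'' into ``product splitting.''
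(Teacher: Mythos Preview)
Your proposal is correct and is precisely the argument the paper intends: the corollary is stated immediately after Corollary~\ref{retractsplit} with no explicit proof, and your write-up simply spells out the implicit reasoning (that $B$ is a zero object of $\mc R_B$, so $F(B)$ is a retract of $F(X)$, and then Corollary~\ref{retractsplit} applies). There is nothing to add.
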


We want a slight generalization of these results to $n$-dimensional cubes of retracts.
First recall the higher-order versions of homotopy fiber and homotopy cofiber from \cite{calc2}.
If $F$ is a $n$-cube of spectra then we can think of it as a map between two $(n-1)$-cubes.
The \emph{total homotopy fiber} $\tfib(F)$ is inductively defined as the homotopy fiber of the map between the total homotopy fibers of these two $(n-1)$-cubes.
For a 0-cube consisting of the space $X$, we define the total fiber to be $X$.
Therefore the total fiber of a 1-cube $X \ra Y$ is $\fib(X \ra Y)$.

The \emph{total homotopy cofiber} $\tcofib(F)$ has a similar inductive definition.
Recall from \cite{calc2} that a cube is Cartesian iff its total fiber is weakly contractible, and co-Cartesian iff its total cofiber is weakly contractible.
From this it quickly follows that a cube of spectra is Cartesian iff it is co-Cartesian.

If $F$ is a functor $\mc R_B^\op \ra \mc Sp$, the \emph{$n$th cross effect} $\cross_n F(X_1,\ldots,X_n)$ is defined as in \cite{calc3} to be the total fiber of the cube
\[ S \subset \nl \quad \leadsto \quad F\left(\bigcup_{i \in S} X_i\right) \]
whose maps come from inclusions of subsets of $\nl$.
Here the big union denotes pushout along $B$; one can think of it as a fiberwise wedge sum.
Since $F$ is contravariant, the initial vertex of this cube corresponds to the full subset $S = \nl$.
Note that there is a natural map
\[ \cross_n F(X_1,\ldots,X_n) \overset{i_n}\ra F\left(\bigcup_{i \in \nl} X_i\right) \]

Similarly, the \emph{$n$th co-cross effect} $\cocross_n F(X_1,\ldots,X_n)$ is defined as in \cite{mccarthy2001dual} and \cite{ching2010chain} to be the total cofiber of the cube with the same vertices
\[ S \subset \nl \quad \leadsto \quad F\left(\bigcup_{i \in S} X_i\right) \]
where the maps come from the opposites of inclusions of subsets of $\nl$.
Each inclusion $S \subseteq T$ results in a collapsing map
\[ \bigcup_{i \in S} X_i \la \bigcup_{i \in T} X_i \]
which becomes
\[ F\left(\bigcup_{i \in S} X_i\right) \ra F\left(\bigcup_{i \in T} X_i\right) \]
Note that the final vertex of this cube corresponds to $S = \nl$, so there is a natural map
\[ F\left(\bigcup_{i \in \nl} X_i\right) \overset{p_n}\ra \cocross_n F(X_1,\ldots,X_n) \]
It is known that the cross effect and co-cross effect are equivalent, when $F$ is a functor from spectra to spectra (\cite{ching2010chain}, Lemma 2.2).
A similar argument gives the following.

\begin{prop}\label{splitting}
If $\mc R_B^\op \overset{F}\ra \mc Sp$ is any contravariant functor, then the composite
\[ \cross_n F(X_1,\ldots,X_n) \overset{i_n}\ra F\left(\bigcup_{i \in \nl} X_i\right) \overset{p_n}\ra \cocross_n F(X_1,\ldots,X_n) \]
is an equivalence.
Furthermore, $F(\bigcup X_i)$ splits into a sum of cross-effects:
\begin{eqnarray*}
F\left(\bigcup_{i \in \nl} X_i\right) &\simeq & \prod_{S \subseteq \nl} \cocross_{|S|} F(X_s : s \in S) \\
&\simeq & \prod_{S \subseteq \nl} \cross_{|S|} F(X_s : s \in S) \\
&\simeq & \bigvee_{S \subseteq \nl} \cross_{|S|} F(X_s : s \in S)
\end{eqnarray*}
The analogous result also holds for covariant functors, and for functors defined only on finite CW complexes.
\end{prop}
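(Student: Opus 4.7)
The plan is to prove both claims by induction on $n$, leveraging the retract structure supplied by the basepoint sections of the $X_i$ and Corollary \ref{retractsplit}. For $n=1$, Cor. \ref{dumb} splits $F(X_1) \simeq F(B) \vee \overline F(X_1)$, with both $\cross_1 F(X_1)$ and $\cocross_1 F(X_1)$ identified with $\overline F(X_1)$; the composite $p_1 \circ i_1$ is the canonical identification of fiber with cofiber for a split map, which is an equivalence by Cor. \ref{retractsplit}.

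For the inductive step, I would use the key observation that for every $S \subseteq T \subseteq \nl$, the pushout inclusion $\bigcup_{i \in S} X_i \hookrightarrow \bigcup_{i \in T} X_i$ admits a natural retraction collapsing the extra $X_j$ to $B$ via their basepoint sections. Applying $F$ makes $F(\bigcup_{i \in S} X_i)$ a retract of $F(\bigcup_{i \in T} X_i)$, so by Cor. \ref{retractsplit},
\[ F(\bigcup_{i=1}^n X_i) \simeq F(\bigcup_{i=1}^{n-1} X_i) \vee \fib\Bigl(F(\bigcup_{i=1}^n X_i) \ra F(\bigcup_{i=1}^{n-1} X_i)\Bigr). \]
The first summand splits by the inductive hypothesis. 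For the fiber summand, I would compare the $(n-1)$-subcube of the defining cube corresponding to subsets containing $n$ with the opposite subcube; the retractions collapsing $X_n$ give compatible splittings at every vertex, expressing the ``contains $n$'' subcube as a wedge of the ``omits $n$'' subcube and a complementary $(n-1)$-cube. Taking total fibers (and using that fibers of spectra commute with wedge sums) identifies the fiber above with the total fiber of this complementary cube, which a secondary induction shows equals $\bigvee_{S \ni n} \cross_{|S|} F(X_s : s \in S)$.

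From the resulting decomposition the remaining claims are read off: both $\cross_n F$ and $\cocross_n F$ are equivalent to the summand indexed by $S = \nl$, and $p_n \circ i_n$ becomes the identity on that summand up to the wedge-versus-product comparison; the switch between $\prod$ and $\bigvee$ uses only that the target is spectra. The covariant case is identical with the roles of inclusion and retraction exchanged, and the restriction to finite CW complexes is automatic. I expect the main technical obstacle to be verifying that the retract splittings are natural enough in the cube variable to assemble into a splitting of cubes---that is, that the chosen sections $F(\text{retraction})$ commute with the remaining cube face maps---and this is precisely where the retractive structure over $B$ is essential; without it one cannot coherently split off the subterms.
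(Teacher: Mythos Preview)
Your inductive strategy---peel off the last variable using the retract $F(\bigcup_{i<n} X_i)\hookrightarrow F(\bigcup_{i\le n} X_i)$ and then analyze the leftover fiber---is sound and does lead to a proof, but the key sentence in your inductive step is garbled.  You write that ``taking total fibers \ldots identifies the fiber above with the total fiber of this complementary cube.''  These are two different objects: the ``fiber above'' is the \emph{initial vertex} of the complementary $(n-1)$-cube
\[
T \;\longmapsto\; \fib\!\Bigl(F\bigl(\textstyle\bigcup_{T\cup\{n\}} X_i\bigr)\to F\bigl(\textstyle\bigcup_T X_i\bigr)\Bigr),
\]
whereas the \emph{total} fiber of that cube is $\cross_n F(X_1,\ldots,X_n)$ itself.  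What you actually need is to recognize the complementary cube as $T\mapsto F'(\bigcup_{t\in T} X_t)$ for the auxiliary functor $F'(Y)=\fib(F(Y\cup_B X_n)\to F(Y))$, and then apply the full inductive hypothesis to $F'$.  This splits the initial vertex as $\bigvee_{T\subseteq\{1,\ldots,n-1\}}\cross_{|T|} F'(X_t:t\in T)$, and the identification $\cross_{|T|} F' \simeq \cross_{|T|+1} F(\,\cdot\,,X_n)$ gives exactly $\bigvee_{S\ni n}\cross_{|S|} F$.  Once you say it this way, the naturality worry you flag at the end dissolves: you are just invoking the hypothesis for a new functor, not splicing splittings by hand.

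The paper's proof takes a more symmetric route: rather than peeling off one variable, it maps $\bigvee_{S\subsetneq\nl}\cross_{|S|} F$ into $F(\bigcup X_i)$ and projects out to $\prod_{S\subsetneq\nl}\cocross_{|S|} F$, then observes that the composite is an equivalence (by the inductive hypothesis it is block-triangular with equivalences down the diagonal).  The leftover summand is then identified with both $\cross_n F$ and $\cocross_n F$ in one stroke, using the descriptions of $\tfib$ and $\tcofib$ from \cite{calc2} as (co)fiber of the map between the extremal vertex and the homotopy (co)limit of the rest of the cube.  This buys the equivalence $\cross_n\simeq\cocross_n$ and the full splitting simultaneously, without an auxiliary functor; your approach is more elementary but requires the extra bookkeeping with $F'$.
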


\begin{rmk}
This does \emph{not} assume that $F$ is a homotopy functor.
\end{rmk}

\begin{proof}
The argument is by induction on $n$.
We form the maps
\[ \bigvee_{S \subsetneq \nl} \cross_{|S|} F(X_s : s \in S) \ra F\left(\bigcup_{i \in \nl} X_i\right) \ra \prod_{S \subsetneq \nl} \cocross_{|S|} F(X_s : s \in S) \]
and observe that the composite is an equivalence.
Therefore the middle contains either of the outside terms as a summand.
We use the alternate definitions of $\tfib$ and $\tcofib$ found in \cite{calc2} to identify the leftover summand with $\cross_{|S|} F$ and $\cocross_{|S|} F$, which proves that they are equivalent and that $F$ splits into a sum of cross effects.
\end{proof}

This generalizes the following well known result: (cf. \cite{browder1969kervaire}, \cite{cohen1980stable})

\begin{cor}[Binomial Theorem for Suspension Spectra]\label{binomial}
If $X$ and $Y$ are well-based spaces then the obvious projection maps yield a splitting
\[ \Sigma^\infty (X \times Y) \simar \Sigma^\infty (X \sma Y) \times \Sigma^\infty X \times \Sigma^\infty Y \]
If $X_1$, $\ldots$, $X_n$ are well-based spaces then we get a more general splitting
\[ \Sigma^\infty \prod_{i=1}^n X_i \simar \prod_{\emptyset \neq S \subset \nl} \Sigma^\infty \bigwedge_{i \in S} X_i \]
and in particular if $X$ is well-based then
\[ \Sigma^\infty X^n \simeq \bigvee_{i=1}^n \binom{n}{i} \Sigma^\infty X^{\sma i} \]
\end{cor}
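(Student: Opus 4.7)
The plan is to prove the two-variable splitting by a retract argument, then bootstrap to the $n$-variable formula by induction on $n$, and extract the counted version as a special case. For $n=2$, the two projections together with the collapse $X \times Y \to X \sma Y$ assemble into a natural map
\[ \psi: \Sigma^\infty(X \times Y) \ra \Sigma^\infty X \vee \Sigma^\infty Y \vee \Sigma^\infty(X \sma Y). \]
The key step is to verify that the composite
\[ \Sigma^\infty(X \vee Y) \inj \Sigma^\infty(X \times Y) \overset{(\pi_X,\pi_Y)}\ra \Sigma^\infty X \vee \Sigma^\infty Y \simeq \Sigma^\infty(X \vee Y) \]
is the identity, by direct inspection on each summand: the inclusion sends $x \in X$ to $(x,*)$, and then $\pi_X, \pi_Y$ recover $x$ in the first summand and the basepoint in the second, and symmetrically for $y \in Y$. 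Cor.~\ref{retractsplit} then gives
\[ \Sigma^\infty(X \times Y) \simeq \Sigma^\infty(X \vee Y) \vee \cofib(\Sigma^\infty(X \vee Y) \inj \Sigma^\infty(X \times Y)), \]
and well-basedness of $X$ and $Y$ identifies the cofiber as $\Sigma^\infty(X \sma Y)$ via the standard cofiber sequence $X \vee Y \inj X \times Y \ra X \sma Y$.

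For general $n$ I would induct on $n$. Setting $P = X_1 \times \cdots \times X_{n-1}$ and $Q = X_n$, the $n=2$ case applied to $P \times Q$ gives
\[ \Sigma^\infty(P \times Q) \simeq \Sigma^\infty P \vee \Sigma^\infty Q \vee \Sigma^\infty(P \sma Q). \]
The inductive hypothesis produces $\Sigma^\infty P \simeq \bigvee_{\emptyset \neq S \subseteq \uline{n-1}} \Sigma^\infty \bigwedge_{i \in S} X_i$, and since $\Sigma^\infty$ is strong monoidal with respect to smash,
\[ \Sigma^\infty(P \sma Q) \simeq \Sigma^\infty P \sma \Sigma^\infty Q \simeq \bigvee_{\emptyset \neq S \subseteq \uline{n-1}} \Sigma^\infty \bigwedge_{i \in S \cup \{n\}} X_i. \]
Partitioning the nonempty subsets $T \subseteq \nl$ into the three classes $n \notin T$, $T = \{n\}$, and $n \in T$ with $|T| \geq 2$ --- sourced respectively from $\Sigma^\infty P$, $\Sigma^\infty Q$, and $\Sigma^\infty(P \sma Q)$ --- recovers exactly $\bigvee_{\emptyset \neq T \subseteq \nl} \Sigma^\infty \bigwedge_{i \in T} X_i$. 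Specializing to $X_i = X$ and grouping subsets by their cardinality delivers $\binom{n}{i}$ copies of $\Sigma^\infty X^{\sma i}$ for each $1 \leq i \leq n$.

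The main obstacle is pure bookkeeping: confirming that the retraction in the base case really composes to the identity on the wedge summand (not to some summand permutation or a weak self-map), and that the reindexing in the inductive step accounts for every nonempty $T \subseteq \nl$ exactly once. A minor technical point to track is that well-basedness propagates through iterated Cartesian products, so the cofiber sequence $X \vee Y \inj X \times Y \ra X \sma Y$ remains available at each stage of the induction.
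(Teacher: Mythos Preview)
Your argument is correct. The paper itself does not give a standalone proof: the corollary is deduced immediately from Prop.~\ref{splitting} applied to the covariant functor $\Sigma^\infty$ on based spaces, so that the splitting into cross effects reads off all summands $\Sigma^\infty \bigwedge_{i \in S} X_i$ at once (the $|S|$th cross effect of $\Sigma^\infty$ evaluated at $X_1,\ldots,X_n$ is $\Sigma^\infty(X_1 \sma \cdots \sma X_n)$). The remark following the corollary sketches a second route via $\Sigma^\infty(X_+) \simeq \Sigma^\infty(X \vee S^0)$ and the algebraic identities $(x+1)(y+1)-1 = xy+x+y$ and $(x+1)^n - 1 = \sum \binom{n}{i} x^i$.

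Your approach is a direct, elementary version of this second route: you invoke only Cor.~\ref{retractsplit} and the cofiber sequence $X \vee Y \to X \times Y \to X \sma Y$, then induct on $n$ by peeling off one factor at a time. This avoids the cross-effect language entirely at the cost of an explicit inductive bookkeeping step. The paper's derivation via Prop.~\ref{splitting} is slicker in that it handles all $n$ uniformly and identifies the summands conceptually, but your argument is self-contained and exposes the mechanism more transparently. Both are valid; yours would be the natural proof if Prop.~\ref{splitting} were not already available.
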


We are now in a position to prove the existence of $P_n F$ for functors from retractive spaces into spectra.
First we will give a result that motivates the construction.

\subsection{An equivalence between $\cat G_n^\op$ and $\cat M_n^\op$}

Let $\cat G_n = \mc T_n$ be the category of based sets $\underline{0}_+$, $\ldots$ , $\nl_+$ and based maps between them.
The opposite of $\cat G_n$ is a full subcategory of Segal's category $\Gamma$.
As before, let $\cat M_n$ be the category of unbased sets $\underline{0} = \emptyset$, $\uline{1}$, $\ldots$ , $\nl$ and \emph{surjective} maps between them.
If $\cat I$ is a category then let $[\cat I,\mc Sp]$ denote the homotopy category of diagrams of spectra indexed by $\cat I$.

The maps in $\cat G_n$ are generated by inclusions, collapses, rearrangements, and maps that fold two points into one.
From the last section, a diagram of spectra indexed by $\cat G_n$ will split into a sum of cross effects.
The first two classes of maps (inclusions and collapses) will simply include or collapse these summands.
Therefore our diagram has redundancies.
If we throw out the redundancies, only the last two classes of maps (rearrangements and folds) still carry interesting information.
But these are exactly the maps that generate the smaller category $\cat M_n$.
We have just given a heuristic argument for the following:

\begin{prop}\label{pirash}
There is an equivalence of homotopy categories
\[ [\cat G_n,\mc Sp] \overset{C}\ra [\cat M_n,\mc Sp] \]
obtained by taking cross-effects
\[ CF(\il) = \cross_i F(\underline{1}_+,\ldots,\underline{1}_+) \]
Its inverse is obtained by taking sums
\[ [\cat G_n,\mc Sp] \overset{P}\la [\cat M_n,\mc Sp] \]
\[ PG(\il_+) = \bigvee_{j=0}^i \binom{i}{j} G(j) \]
There is also an equivalence of homotopy categories
\[ [\cat G_n^\op,\mc Sp] \simeq [\cat M_n^\op,\mc Sp] \]
obtained from co-cross effects and products
\[ CF(\il) = \cocross_i F(\underline{1}_+,\ldots,\underline{1}_+) \]
\[ PG(\il_+) = \prod_{j=0}^i \binom{i}{j} G(j) \]
\end{prop}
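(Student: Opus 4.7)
The plan is to exhibit $C$ and $P$ as inverse equivalences by verifying functoriality and then constructing natural equivalences $PC \simeq \id$ and $CP \simeq \id$, with Prop.~\ref{splitting} doing all the genuine homotopical work.

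First I would check that $C$ is a well-defined functor $[\cat G_n,\mc Sp] \to [\cat M_n,\mc Sp]$. The assignment $F \mapsto CF$ is evidently functorial in natural transformations of $F$, since cross effects are built from $F$ by taking total homotopy fibers of cubes. Functoriality in $\cat M_n$ is less automatic: I would observe that $\cat M_n$ is generated by permutations $\il \to \il$ and by single folds $\il \to \uline{i-1}$, and then note that permutations act on $\cross_i F(\uline 1_+,\ldots,\uline 1_+)$ by permuting the arguments, while folds act by precomposing with the $\cat G_n$-morphism $\uline 2_+ \to \uline 1_+$ in the appropriate coordinate and using functoriality of the total fiber. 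Dually, $P$ is functorial on $\cat G_n$ because any based map $\il_+ \to \kl_+$ decomposes into inclusions, collapses to basepoint, permutations and folds, each of which acts on $\bigvee_{S \subseteq \il} G(|S|)$ in a transparent block form: collapses kill the summands whose index set meets the collapsed element, inclusions embed summands, permutations permute them, and folds map $G(|S|)$ to $G(|S|-1)$ via the fold morphism of $\cat M_n$.

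Next I would establish $P \circ C \simeq \id$. On objects, $PCF(\il_+) = \bigvee_{S\subseteq \il} \cross_{|S|} F(\uline 1_+,\ldots,\uline 1_+)$, which is exactly the wedge decomposition provided by Prop.~\ref{splitting} applied with $X_j = \uline 1_+$. That proposition already yields a pointwise equivalence $F(\il_+) \simar PCF(\il_+)$; the content that remains is naturality in $\cat G_n$. I would verify naturality on a generating set of morphisms in $\cat G_n$: for inclusions and collapses the equivalence is natural because the splitting is indexed by subsets $S$ and these morphisms either include or kill summands indexed by such $S$; for permutations and folds naturality reduces to the defining naturality of cross effects in their arguments. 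Similarly, for $C \circ P$, given $G: \cat M_n \to \mc Sp$ one computes $CPG(\il)$ as the total fiber of the cube $T \subseteq \il \mapsto \bigvee_{S \subseteq T} G(|S|)$ whose maps are inclusions of wedge summands; all lower-index summands cancel and the total fiber picks out $G(\il)$ canonically and naturally in $\il \in \cat M_n$.

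For the opposite-category statement, the same outline applies with co-cross effects and products replacing cross effects and wedges. The only nontrivial input beyond rerunning the argument is the equivalence of cross and co-cross effects in spectra, which is part of Prop.~\ref{splitting}, so the two equivalences of homotopy categories have essentially identical proofs.

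The main obstacle I anticipate is bookkeeping of naturality rather than any deep homotopical input: Prop.~\ref{splitting} delivers the objectwise equivalence for free, but organizing the verification of compatibility with all morphisms of $\cat G_n$ (or $\cat M_n$) requires picking convenient generators and patiently tracking how each generating morphism permutes or combines wedge summands indexed by subsets. Once the calculation is organized around the splitting, no further homotopical technology is needed.
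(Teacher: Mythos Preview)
Your outline matches the paper's strategy on the easy half: the paper also says that a natural equivalence of diagrams $CPG \ra G$ is straightforward to define, essentially for the reason you give. The gap is in your treatment of the other composite $PCF \simeq \id$.

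You assert that the pointwise equivalences $PCF(\il_+) \simar F(\il_+)$ coming from Prop.~\ref{splitting} assemble into a natural transformation of $\cat G_n$-diagrams, to be checked on generating morphisms. The paper explicitly says this is false: ``Prop.~\ref{splitting} gives an equivalence $PCF(\il_+) \ra F(\il_+)$ for each object $\il_+ \in \cat G_n$, but these equivalences do not commute with the maps of $\cat G_n$.'' The splitting maps $\cross_{|S|}F \ra F(\il_+)$ are built from the total-fiber construction, which involves paths in cubes $I^{\il}$; under a $\cat G_n$-morphism the relevant cube changes shape, and the resulting squares commute only up to homotopy, not on the nose. For instance, under a fold $\uline{2}_+ \ra \uline{1}_+$ the summand $\cross_2 F$ in $PCF(\uline 2_+)$ must map trivially into $PCF(\uline 1_+)$, but the corresponding composite $\cross_2 F \ra F(\uline 2_+) \ra F(\uline 1_+)$ is only null\emph{homotopic}. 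Checking commutativity up to homotopy on a generating set does not by itself produce a morphism in the homotopy category of diagrams; one needs coherence of the homotopies.

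The paper's fix is to construct the isomorphism $PCF \ra F$ directly in the homotopy category: for each arrow $\il_+ \ra \jl_+$ of $\cat G_n$ one specifies a contractible space of maps $PCF(\il_+) \ra F(\jl_+)$, compatible with composition, and consisting of equivalences on identity arrows. These spaces turn out to be products of the same cubes $I^{\il}$ that appear in the total-fiber definition, with the comparison between cubes of different dimension handled by generalized diagonal maps $I^{\il} \ra I^{\jl}$. Your proposal is missing this coherence step; the ``bookkeeping of naturality'' you anticipate is not merely bookkeeping but requires this additional idea.
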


\begin{rmk}
The author learned a version of this result from Greg Arone.
A similar result for diagrams of abelian groups was done by Pirashvili \cite{pirashvili2000dold}.
Helmstutler \cite{helmstutler2008model} gives a more sophisticated treatment that handles both abelian groups and spectra in the same uniform way.
He gives a Quillen equivalence between the two categories of diagrams with the projective model structure.
This is of course stronger than just an equivalence of homotopy categories, but we may think of the above result as a very explicit description of the derived functors.
This perspective was essential in making the correct guess for $P_n F$ in section \ref{mapping} above, and it motivates our proof of Thm. \ref{final_third} below.
\end{rmk}

\begin{proof}
We define diagrams that extend the above constructions on objects.
The essential ingredient is to define maps between the various cubes that show up in the definition of total homotopy fiber and cofiber found in \cite{calc2}.
These maps of cubes $I^{\il} \ra I^{\jl}$ are all generalized diagonal maps coming from maps of sets $\il \la \jl$.
Then it is easy to define a natural equivalence of diagrams $CPG \ra G$.
On the other hand, Prop. \ref{splitting} gives an equivalence $PCF(\il_+) \ra F(\il_+)$ for each object $\il_+ \in \cat G_n$, but these equivalences do not commute with the maps of $\cat G_n$.
Instead, we define an isomorphism $PCF \ra F$ in the homotopy category of diagrams.
To do this, we choose for each arrow $\il_+ \ra \jl_+$ of $\cat G_n$ a contractible space of maps
\[ PCF(\il_+) \ra F(\jl_+) \]
that agrees in a natural way with compositions, and such that on the identity arrows $\il_+ = \il_+$ we choose only equivalences
\[ PCF(\il_+) \ra F(\il_+) \]
Our chosen spaces of maps $PCF(\il_+) \ra F(\jl_+)$ end up being products of cubes, the same cubes that appear in the definition of total homotopy fiber above.
This gives the desired equivalence of homotopy categories.

The contravariant case is similar, but we will give one more detail here since it is needed in the next section.
Let $F: \cat G_n^\op \ra \mc Sp$ be a diagram.
For each map $\il_+ \la \jl_+$ in $\cat G_n$, we use the diagonal map $I^{\il} \ra I^{\jl}$ to define
\[ \bigvee_{S \subset \il} I^{\il - S}_+ \sma F(S_+) \ra \bigvee_{T \subset \jl} I^{\jl - T}_+ \sma F(T_+) \]
taking the summand for $S \subset \il$ to the summand for $f^{-1}(S) \subset \jl$.
This passes to a well-defined map on the co-cross effects of $F$, which gives the arrows of the diagram $CF$.
\end{proof}

\subsection{$P_n F$ for retractive spaces over $B$ into spectra}\label{hack}

Let us consider homotopy functors
\[ \mc R_{B,\fin}^\op \overset{F}\ra \mc Sp \]
from finite retractive spaces into spectra.
Our previous construction of $P_n F$ was roughly the same as a mapping space of diagrams indexed by $\mc U_{B,n}^\op$, the spaces under $B$ with at most $n$ points.
When $B \neq *$, this approach calls for more technology because $\mc U_{B,n}$ needs to be enriched.
However, the equivalence $[\cat G_n^\op,\mc Sp] \simeq [\cat M_n^\op,\mc Sp]$ suggests that we could just eliminate the inclusion and collapse maps in $\mc U_{B,n}$.
This leads to the category $\cat M_n$ again, which does not need to be enriched.

So we replace our diagrams
\[ \begin{array}{c}
\mc U_{B,n} \ra \mc Sp \\
\il \amalg B \leadsto X^i \\
\il \amalg B \leadsto F(\il \amalg B)
\end{array} \]
with the diagrams of co-cross effects
\[ \begin{array}{c}
\cat M_n \ra \mc Sp \\
\il \leadsto X^{\barsmash i} \\
\il \leadsto \cocross_i F(\uline{1} \amalg B,\ldots,\uline{1} \amalg B)
\end{array} \]
where $\barsmash$ is the external smash product from Def. \ref{smash}.
We are being sloppy about the existence of maps into $B^i$, but this gives enough intuition to suggest that we try the following construction on retractive simplicial sets $X_\cdot$ over $S_\cdot B$:
\begin{eqnarray*}
E_n F(X_\cdot) &=& \underset{(\il \la \jl) \in a\cat M_n^\op}\holim\, \left( \underset{\Delta_{X_\cdot^{\barsmash i}}}\holim\,\, \cocross_j F(\Delta^p \amalg B,\ldots,\Delta^p \amalg B)  \right) \\
&\simeq & \underset{\cat M_n^\op \int \Delta_{X_\cdot^{\barsmash i}}}\holim\, \cocross_i F(\Delta^p \amalg B,\ldots,\Delta^p \amalg B) \\
\end{eqnarray*}
As before, the equivalence comes from Prop. \ref{holim_twisted_split}.
Here $X_\cdot^{\barsmash i}$ is a simplicial set containing $(S_\cdot B)^i$ as a retract, whose fiber over a simplex in $(S_\cdot B)^i$ is the smash product of the fibers in $X_\cdot$.
The homotopy type of $X_\cdot^{\barsmash i}$ is homotopy invariant in $X_\cdot$ by the same argument as Prop. \ref{diagonalacycliccells} above.
As before, we extend $E_n F$ to spaces by $E_n F(X) := E_n F(S_\cdot X)$.

Each surjective map $\il \la \jl$ induces a cofibration $X_\cdot^{\barsmash i} \ra X_\cdot^{\barsmash j}$.
This determines a functor $\Delta: \cat M_n^\op \ra \cat{Cat}$ by that sends $\il$ to the category $\Delta_{X_\cdot^{\barsmash i}}$.
The diagram
\[ \cat M_n^\op \int \Delta_{X^{\barsmash i}} \overset{\cocross F}\ra \mc Sp \]
is then defined by
\[ \xymatrix{
\il, & \Delta[p] \ar[r] & X_\cdot^{\barsmash i} \ar[d] &\leadsto & \cocross_i F(\Delta^p \amalg B,\ldots,\Delta^p \amalg B) \ar[d] \\
\jl, \ar[u] & \Delta[q] \ar[r] \ar[u] & X_\cdot^{\barsmash j} &\leadsto & \cocross_j F(\Delta^q \amalg B,\ldots,\Delta^q \amalg B) } \]
The map of co-cross effects is defined in the proof of Prop. \ref{pirash} above.
We can show that $E_n F$ is $n$-excisive by proving properties (1), (2), and (3) from section \ref{secondconstruction}.
Property (1) follows from the above equivalences, and property (3) is straightforward.
We will do (2) in detail.

As before, we can start with a pushout cube of simplicial sets, with initial vertex $A \in \cat{sSet}_{S_\cdot B}$.
It's indexed by a set $S$, so for each element $s \in S$, let $X_s \in \cat{sSet}_{S_\cdot B}$ be a simplicial set containing $A$ (and also containing $S_\cdot B$ as a retract).
Then there is a pushout cube which assigns each subset $T \subset S$ to the simplicial set $\bigcup_{t \in T} X_t$, which is shorthand for the pushout of the $X_t$ along $A$.
We want to show that $E_n F$ takes this to a Cartesian cube; in other words, the natural map
\[ \underset{\Delta[p] \rightarrow (\bigcup_S X_s)^{\barsmash i}}\holim\, \cocross_i F(\Delta^p \amalg B,\ldots,\Delta^p \amalg B) \hspace{50pt} \]
\[ \hspace{50pt} \ra \underset{(T \subsetneq S)^\op}\holim\, \left(\underset{\Delta[p] \rightarrow (\bigcup_T X_s)^{\barsmash i}}\holim\, \cocross_i F(\Delta^p \amalg B,\ldots,\Delta^p \amalg B) \right) \]
is an equivalence.
Using dual Thomason, we rewrite the right-hand side as
\[ \underset{(T,\il, \Delta[p] \rightarrow (\bigcup_T X_s)^{\barsmash i})}\holim\, \cocross_i F(\Delta^p \amalg B,\ldots,\Delta^p \amalg B) \]
where each object of the indexing category is a proper subset $T \subsetneq S$, integers $p \geq 0$ and $0 \leq i \leq n$, and a map $\Delta[p] \rightarrow (\bigcup_T X_s)^{\barsmash i}$.
A map between two objects looks like
\[ \xymatrix{
T, & \il, & \Delta[p] \ar[r] & (\bigcup_T X_s)^{\barsmash i} \ar[d] \\
&&& (\bigcup_T X_s)^{\barsmash j} \\
U, \ar[uu]^*\txt{subset} & \jl, \ar[uu] & \Delta[q] \ar[r] \ar[uu] & (\bigcup_U X_s)^{\barsmash j} \ar[u] } \]
As before, this category maps forward into $\cat M_n^\op \int \Delta_{(\bigcup_S X_s)^{\barsmash i}}$, in which a map between two objects looks like
\[ \xymatrix{
& \il, & \Delta[p] \ar[r] & (\bigcup_S X_s)^{\barsmash i} \ar[d] \\
& \jl, \ar[u] & \Delta[q] \ar[r] \ar[u] & (\bigcup_S X_s)^{\barsmash j} } \]
This functor $\alpha$ forgets the data of $T$ and includes $(\bigcup_T X_s)^{\barsmash i}$ into $(\bigcup_S X_s)^{\barsmash i}$.
The natural map of homotopy limits defined above is again induced by a pullback of diagrams along $\alpha$, so we just have to show that $\alpha$ is homotopy initial.
Given an object $\Delta[q] \overset\varphi\ra (\bigcup_S X_s)^{\barsmash j}$ in the target category, the overcategory $(\alpha \downarrow \varphi)$ has as its objects the factorizations of $\varphi$
\[ \Delta[q] \ra \Delta[p] \ra (\bigcup_T X_s)^{\barsmash i} \ra (\bigcup_S X_s)^{\barsmash j} \]
where $T \subsetneq S$ must be a \emph{proper} subset of $S$.

Let us give a terminal object for this overcategory.
Either the map out of $\Delta^q$ hits the basepoint section, in which case we take $T = \emptyset$, or it misses the basepoint section, in which case it gives a $j$-tuple of simplices in $\bigcup_S X_s$, each of which lands \emph{inside} one of the sets $X_s$ in the pushout.
Therefore there is a smallest subset $T \subset S$ such that $\Delta[q] \overset\varphi\ra (\bigcup_S X_s)^{\barsmash j}$ lands inside $(\bigcup_T X_s)^{\barsmash j}$, and since $j \leq n < |S|$, this subset is proper.
This gives a terminal object for the overcategory $(\alpha \downarrow \varphi)$, so it is contractible, which finishes (2).

We might now expect that $F \ra E_n F$ is an equivalence on $\mc R_{B,n}^\op$.
This turns out to be false, but Corollary \ref{dumb} suggests the following fix.
Define a new functor
\[ P_n F(X) = \overline{E_n F}(X) \times F(\uline{0} \amalg B) \]
Note that $P_n F(X)$ is $n$-excisive because it is a homotopy limit of $n$-excisive functors.

Now let $X = \jl \amalg B$ with $j \leq n$.
Then $X^{\barsmash i} \cong (\jl)^i \amalg B^i$.
We can partition $\Delta_{X^{\barsmash i}}$ into two categories, one in which the simplex lands in the basepoint section $B^i$ and another in which the simplex misses the basepoint section.
This leads to a partition of $\cat M_n^\op \int \Delta$ into three categories, one in which there are no simplices, one in which the simplices land in $B$, and one in which the simplices miss $B$.
The homotopy limit of the first two is $E_n F(\uline{0} \amalg B)$, which contains the homotopy limit of the first $F(\uline{0} \amalg B)$.
The homotopy limit of the last category is therefore $\overline{E_n F}(\jl \amalg B)$.

This last category has as its objects $\il \times \Delta[p] \ra \jl$ for varying $1 \leq i \leq n$ and $p \geq 0$, with only surjective maps allowed in the $\il$ variable and only faces for $\Delta[p]$.
This category is further partitioned into one component for each possible image of $\il$ in $\jl$, which may be represented uniquely by an order-preserving inclusion $\il \ra \jl$.
In each of these components, there is a homotopy initial subcategory of objects in which the map of sets $\il \ra \jl$ is the chosen order-preserving inclusion, and this subcategory has contractible nerve.
In total, we get a splitting of the homotopy limit
\[ \overline{E_n F}(\jl \amalg B) \simeq \prod_{\emptyset \neq \il \subset \jl} \cocross_i F(\uline{1} \amalg B,\ldots,\uline{1} \amalg B) \]
\[ P_n F(\jl \amalg B) \simeq \prod_{\il \subset \jl} \cocross_i F(\uline{1} \amalg B,\ldots,\uline{1} \amalg B) \]
Using our splitting result (Prop. \ref{splitting}), this shows that $F(\jl \amalg B) \ra P_n F(\jl \amalg B)$ is an equivalence.
This finishes the proof that $P_n F$ exists for $F$ from retractive spaces over $B$ into spectra:

\begin{thm}\label{final_third}
If $F: \mc R_{B,\fin}^\op \ra \mc Sp$ is a homotopy functor, then there is a universal $n$-excisive approximation $P_n F$, and $F \ra P_n F$ is an equivalence on spaces with at most $n$ points.
\end{thm}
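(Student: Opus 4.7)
The plan is to construct $P_n F$ as a modified homotopy limit, closely mirroring the strategy outlined in section \ref{hack}. First, I would bypass the need to enrich $\cat U_{B,n}$ over spaces (which would otherwise be required when $B \neq *$) by replacing the indexing category with the unenriched category $\cat M_n$ and using co-cross effects of $F$; this is motivated by the equivalence $[\cat G_n^\op,\mc Sp] \simeq [\cat M_n^\op,\mc Sp]$ of Prop. \ref{pirash}. For a retractive simplicial set $X_\cdot$ over $S_\cdot B$, define
\[ E_n F(X_\cdot) := \underset{\cat M_n^\op \int \Delta_{X_\cdot^{\barsmash i}}}\holim\, \cocross_i F(\Delta^p \amalg B,\ldots,\Delta^p \amalg B), \]
and extend to retractive spaces via $E_n F(X) := E_n F(S_\cdot X)$. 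The functoriality of the integrand over $\cat M_n^\op$ is supplied by the generalized diagonal construction from the proof of Prop. \ref{pirash}.

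Next, I would verify properties (1)--(3) of section \ref{secondconstruction} for $E_n F$. Property (1) follows by applying Prop. \ref{holim_twisted_split} to rewrite $E_n F(X_\cdot)$ as the iterated limit
\[ \underset{(\il \la \jl) \in a\cat M_n^\op}\holim\,\left(\underset{\Delta_{X_\cdot^{\barsmash i}}}\holim\, \cocross_j F(\Delta^p \amalg B, \ldots, \Delta^p \amalg B)\right), \]
restricting the inner limit to the homotopy initial subcategory $\Delta^\nd_{X_\cdot^{\barsmash i}}$, and invoking Prop. \ref{holim_ac} together with the homotopy invariance of $X_\cdot \mapsto X_\cdot^{\barsmash i}$ (the simplicial analogue of Prop. \ref{diagonalacycliccells}). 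Property (3) reduces via dual Thomason to showing that a forgetful functor from a category of (finite subcomplex, simplex) pairs is homotopy initial, which holds because every simplex in a CW complex lies in a unique smallest finite subcomplex. The main work is property (2): given a pushout cube $T \mapsto \bigcup_T X_s$ indexed by subsets of a set $S$ with $|S| > n$, dual Thomason rewrites the punctured homotopy limit as a single homotopy limit indexed by tuples $(T \subsetneq S, \il, \Delta[p] \ra (\bigcup_T X_s)^{\barsmash i})$, and the comparison map is a pullback along a forgetful functor $\alpha$ that must be shown homotopy initial. For each $\varphi: \Delta[q] \ra (\bigcup_S X_s)^{\barsmash j}$, the overcategory $(\alpha \downarrow \varphi)$ admits a terminal object: either $\varphi$ lands in the basepoint section (take $T = \emptyset$), or the associated $j$-tuple of nondegenerate simplices lies in a unique smallest proper $T \subsetneq S$, using $j \leq n < |S|$ together with the fact that a simplex in a pushout of simplicial sets has a unique minimal factor.

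This shows $E_n F$ is $n$-excisive, but it is not yet the desired approximation: on $\jl \amalg B$ it picks up extra copies of $F(0_B)$ coming from simplices of $(\jl \amalg B)^{\barsmash i}$ that land in the basepoint section. Using Cor. \ref{dumb}, I would therefore define
\[ P_n F(X) := \overline{E_n F}(X) \times F(0_B), \]
where $\overline{E_n F}(X)$ is the homotopy fiber of $E_n F(X) \ra E_n F(0_B)$. Since homotopy fibers and products of $n$-excisive functors are $n$-excisive, so is $P_n F$.

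To verify that $F \ra P_n F$ is an equivalence on $\mc R_{B,n}^\op$, partition $\cat M_n^\op \int \Delta_{(\jl \amalg B)^{\barsmash i}}$ by whether a given simplex meets the basepoint section; the off-basepoint piece contains a homotopy initial subcategory of objects $\Delta[0] \times \il \inj \jl$ with $\il$ a nonempty order-preserving subset. Since the only surjections between subsets of $\jl$ compatible with the inclusions into $\jl$ are identities, the resulting homotopy limit collapses to an ordinary product
\[ \overline{E_n F}(\jl \amalg B) \simeq \prod_{\emptyset \neq \il \subset \jl} \cocross_i F(\uline{1} \amalg B,\ldots,\uline{1} \amalg B), \]
which matches $\overline{F}(\jl \amalg B)$ by the splitting in Prop. \ref{splitting}; adding back $F(0_B)$ yields the equivalence $F(\jl \amalg B) \simar P_n F(\jl \amalg B)$. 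Universality is then immediate from Prop. \ref{exists}. The main obstacle is property (2): the homotopy initiality of $\alpha$ hinges on the unique-minimal-factorization property of pushouts in simplicial sets, which is precisely the reason the construction has to factor through $S_\cdot$ rather than operate directly on retractive spaces.
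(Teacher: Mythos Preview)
Your proposal is correct and follows essentially the same approach as the paper's own argument in section \ref{hack}: the construction of $E_n F$ via co-cross effects indexed over $\cat M_n^\op \int \Delta_{X_\cdot^{\barsmash i}}$, the verification of (1)--(3) (including the homotopy initiality of $\alpha$ via the minimal-$T$ terminal object), the correction $P_n F = \overline{E_n F} \times F(0_B)$, and the final splitting computation on $\jl \amalg B$ all match the paper's proof. The only cosmetic difference is that the paper partitions the indexing category for $\jl \amalg B$ into three pieces (the $i=0$ piece, simplices in $B$, simplices off $B$) rather than two, but this does not affect the argument.
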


\section{A convergence result}\label{convergence}

Let $F$ be a homotopy functor. Consider its tower of universal approximations
\[ F(X) \ra \ldots \ra P_n F(X) \ra \ldots \ra P_2 F(X) \ra P_1 F(X) \ra P_0 F(X), \]
We say this tower \emph{converges} at the space $X$ when $F(X)$ is equivalent to the homotopy inverse limit of the tower.
In this section, we describe a condition on $F$ which guarantees that the tower converges for all spaces $X$ of dimension less than $m$.
This result is the analogue of Goodwillie and Weiss's convergence theorem from manifold calculus (\cite{goodwillie1999embeddings}, Thm 2.3), and the proof is similar.
In fact, we give a variant of the proof that does not depend on the various ways $P_n F$ is actually constructed, only its excision properties.
This allows us to treat the various cases we considered in Thm 1.1 with the same argument.
Finally, this convergence criterion applies to the example of $F(X) = \Sigma^\infty_+ \Map_B(X,E)$, yielding a theorem that agrees precisely with the convergence theorems of Goodwillie calculus in the covariant slot $E$.

To begin, we describe how to modify definition of an analytic functor from \cite{goodwillie1999embeddings}, Def 2.1, to the continuous setting.
Recall that an $(n+1)$-cube of spaces or spectra is \emph{$k$-Cartesian} if the map from the initial vertex to the homotopy limit of the rest is $k$-connected.
\begin{df}
\begin{itemize}
\item Let $\{ X_T : T \subseteq S \}$ be a pushout cube indexed by a set $S$. For each $s \in S$, replace the map $X_\emptyset \ra X_{\{s\}}$ by a relative CW-complex with minimal dimension, and call that dimension $d_s$ if it is not infinite.
\item A contravariant homotopy functor $F$ is \emph{$m$-analytic with excess $c$} if it sends any pushout $S$-cube with all $d_s$ finite to a $k$-Cartesian cube, where
\[ k = c + \sum_{s \in S} (m - d_s) \]
\end{itemize}
\end{df}

\begin{rmk}
The condition of being $m$-analytic gets stronger as $m$ increases.
As mentioned above, this definition is the same whether the domain of $F$ is retractive spaces or unbased spaces, or whether the output of $F$ is in based spaces or spectra.
\end{rmk}

We establish the reasonableness of this definition first with an example.
\begin{prop}
If $E \ra B$ is an ex-fibration which is $m$-connected, so that the fibers $E_b$ are $(m-1)$-connected, then the contravariant functor $\mc R_B^\op \ra \mc Sp$
\[ F(X) = \Sigma^\infty_+ \Map_B(X,E) \]
is $m$-analytic with excess 0.
\end{prop}

\begin{proof}
Let $\{ X_T : T \subseteq S \}$ be a pushout cube as above with $|S| = n+1$.
Then we prove that the restriction map
\[ \Map_B(X_{\{s\}},E) \ra \Map_B(X_{\emptyset},E) \]
is $(m-d_s)$-connected by induction on $d_s$.
In each step we attach cells of dimension $d$ to the $d-1$ skeleton $X^{(d-1)}_{\{s\}}$, giving the map
\[ \Map_B(X_{\{s\}}^{(d)},E) \ra \Map_B(X_{\{s\}}^{(d-1)},E) \]
We prove this map is $(m-d)$-connected by producing a lift in each square
\[ \xymatrix{
S^{k-1} \ar[r] \ar[d] & \Map_B(X_{\{s\}}^{(d)},E) \ar[d] \\
D^k \ar[r] \ar@{-->}[ur] & \Map_B(X_{\{s\}}^{(d-1)},E) } \]
when $k \leq (m-d)$. This reduces to finding collection of lifts in squares of the form
\[ \xymatrix{
S^{d+k-1} \ar[r] \ar[d] & E \ar[d] \\
D^{d+k} \ar[r] \ar@{-->}[ur] & B } \]
but this is always possible when $d+k \leq m$, because $E \ra B$ is an $m$-connected fibration.

Now when we apply $\Map_B(-,E)$ to our pushout $(n+1)$-cube, we get a strongly Cartesian cube.
We have just proven that the final legs of this cube are $(m-d_s)$-connected maps, for varying $s \in S$.
Applying Theorem 2.4 from \cite{calc2}, we conclude that this cube is also $[\sum_{i=1}^{n+1} (m-d_s) + n-1]$-co-Cartesian.
Taking suspension spectra, we get a cube that is also $[\sum_{i=1}^{n+1} (m-d_s) + n-1]$-co-Cartesian.
By Remark 1.19 from \cite{calc2}, such a cube of spectra must also be $\sum_{i=1}^{n+1} (m-d_s)$-Cartesian.
This proves that $F$ is $m$-analytic with excess 0.
\end{proof}

The upshot of this definition is the following theorem, which gives a connectivity estimate for $F(X) \ra P_{n-1} F(X)$ when $X$ is a finite CW complex. As before, when we work in the category of retractive spaces over $B$, we take the term ``CW complex'' to mean a relative CW complex $B \ra X$.

\begin{thm}
If $F$ is $m$-analytic with excess $c$, and $X$ is a finite CW complex of dimension $d$, then the natural map
\[ F(X) \ra P_{n-1} F(X) \]
is $(n(m-d)+c)$-connected.
\end{thm}

\begin{cor}
If $F$ is $m$-analytic, then its tower converges on all finite CW complexes $X$ of dimension at most $m-1$. If in addition $F$ is finitary, then its tower converges on all CW complexes of dimension at most $m-1$.
\end{cor}

\begin{rmk}
The exact connectivity estimate given in the theorem may not extend to the case of infinite complexes. One difficulty is that it is possible for a filtered homotopy limit of $k$-connected maps to be less than $k$-connected.
\end{rmk}

\begin{proof}
Fix $n \geq 0$. We will prove the above theorem by induction on $d$.
We will do the case where the output of $F$ is based spaces, but for spectra the argument is similar and slightly easier.

If $d = 0$, then the complex $X$ is just a finite set of $k$ points added to the initial object of the domain category, which we abbreviate as $\kl$, even though it may instead be a space of the form $\kl \amalg B$.
We wish to show that
\[ F(\kl) \ra P_{n-1} F(\kl) \]
is $(nm + c)$-connected.
If $k \leq n-1$, then we already know that $F(\kl) \ra P_{n-1} F(\kl)$ is an equivalence.
So assume that $k \geq n$.
In this case, we factor the desired map as
\[ F(\kl) \simar P_k F(\kl) \ra P_{k-1} F(\kl) \ra \ldots \ra P_n F(\kl) \ra P_{n-1} F(\kl) \]
To show that the composite is $(nm + c)$-connected, it suffices to prove that
\begin{equation}\label{conv_map}
P_n F(\kl) \ra P_{n-1} F(\kl)
\end{equation}
is $(nm + c)$-connected, for then the same argument shows that the higher links of this chain are more highly connected.
Certainly when $k = n$, this map can be identified with the map
\begin{equation}\label{conv_map_2}
F(\nl) \ra \underset{S \subset \nl}\holim\, F(S)
\end{equation}
which is $c + \sum_{i=1}^{n} (m)$-connected by the analyticity condition of $F$.
When $k > n$, we form a $k$-dimensional pushout cube whose initial vertex is the inital object of the category, and each initial leg adjoins a single point, so that the final vertex is our space $\kl$.
Apply $P_n F \ra P_{n-1} F$ to this cube to get a map of two $k$-dimensional cubes.
We need to prove that the map on the initial vertices $P_n F(\kl) \ra P_{n-1} F(\kl)$ is $k$-connected, so we pick any basepoint in $P_{n-1} F(\kl)$ and examine the homotopy fiber over that point.
That point has images in the rest of the $P_{n-1} F$-cube, and taking homotopy fibers over all these points, we get a new homotopy fiber cube $\mathbf{F}$.

We are reduced to showing that the initial vertex of $\mathbf{F}$ is $(nm+c-1)$-connected.
The vertices of $\mathbf{F}$ are parametrized by the subsets of $\kl = \{1,\ldots,k\}$, so that the inital vertex is the whole set and the final vertex is empty.
By the universal properties of $P_n F$ and $P_{n-1} F$, every subset $S$ of size at most $n-1$ corresponds to a vertex whose space is contractible.
Each subset $T$ of size $n$ corresponds to a space $X_T$ which is the homotopy fiber of the map (\ref{conv_map_2}), so by the analyticity condition on $F$, $X_T$ is $(nm+c-1)$-connected.
Finally, we know that each $(n+1)$-dimensional face of $\mathbf{F}$ is Cartesian, since the same was true for both the $P_n F$ and $P_{n-1} F$ cubes.

Now map $\mathbf{F}$ forward to a new cube $\mathbf{F'}$ which at each subset $S  \subset \kl$ is the product
\[ \prod_{T \subseteq S,\, |T| = n} X_T \]
and whose maps are projections.
We prove that this new cube also has all $(n+1)$-dimensional faces Cartesian, in three steps.
First, $\mathbf{F'}$ is a fibration cube, because for every $S \subseteq \kl$ the map
\[ \prod_{T \subseteq S,\, |T| = n} X_T \ra \underset{R \subsetneq S}\lim\, \prod_{T \subseteq R,\, |T| = n} X_T \]
is either a homeomorphism or the projection $X_S \ra *$.
Second, since it is a fibration cube, the limit in the expression above is always a homotopy limit, and then when $|S| \geq n+1$ the above map is a homeomorphism.
This proves that every face of $\mathbf{F'}$ which both includes the final vertex and has dimension at least $(n+1)$ is Cartesian.
Third, by Prop. 1.7 of \cite{calc2}, this is equivalent to every face of dimension $(n+1)$ being Cartesian.
Now $\mathbf{F} \ra \mathbf{F'}$ is a map of cubes, which is an equivalence on all vertices corresponding to $S \subset \kl$ of size $\leq n$.
Since both cubes have all $(n+1)$-dimensional faces Cartesian, an easy induction proves that the map of cubes is an equivalence on every vertex, including the first.
Therefore the initial vertex of $\mathbf{F}$ is $(nm+c-1)$-connected, finishing the proof that the map (\ref{conv_map}) is $(nm+c)$-connected.

This finishes the case of 0-dimensional complexes, so we proceed with the inductive argument for higher dimensions.
Take any $d \geq 1$, and assume that for all finite $(d-1)$-dimensional complexes $Y$ the map
\[ F(Y) \ra P_{n-1} F(Y) \]
is $(n(m-d+1) + c)$-connected.
Let $X$ be a finite $d$-dimensional complex with $(d-1)$-skeleton $X^{(d-1)}$.
Then $X$ is the colimit of the $n$-dimensional pushout cube
\[ T \subset \{1,\ldots,n\} \quad \leadsto \quad X^{(d-1)} \cup_{\amalg S^{d-1}} \amalg L^d_T \]
Here $L^d_T$ is the $n$-sheeted layer cake space defined in the proof of Prop \ref{polys_determined_by_nplusone_points}, and the disjoint union is indexed by the $d$-dimensional cells of $X$.
For example, when $n = 2$ this gives the pushout square
\[ \xymatrix{
X^{(d-1)} \cup_{\amalg S^{d-1}} \amalg L^d_\emptyset \ar[r] \ar[d] & X^{(d-1)} \cup_{\amalg S^{d-1}} \amalg L^d_{\{1\}} \ar[d] \\
X^{(d-1)} \cup_{\amalg S^{d-1}} \amalg L^d_{\{2\}} \ar[r] & X^{(d-1)} \cup_{\amalg S^{d-1}} \amalg L^d_{\{1,2\}} = X
} \]
Applying $F \ra P_{n-1} F$ gives a map of two $n$-cubes.
Remove the initial vertices to get a map of two ``punctured cubes'' $W \ra Z$, which by inductive assumption is $(n(m-d+1) + c)$-connected on every vertex.
The map $\holim\, W \ra \holim\, Z$ therefore has connectivity
\[ (n(m-d+1) + c) - (n-1) = n(m-d) + c + 1 \]
Now consider the following commuting square:
\[ \xymatrix{
F(X) \ar[d] \ar[r] & \holim\, W \ar[d] \\
P_{n-1} F(X) \ar[r]^-\sim & \holim\, Z } \]
We already know that the right-hand map is $(n(m-d) + c + 1)$-connected.
By the analyticity property of $F$, the top map is $(n(m-d) + c)$-connected, so the composite of the two is $(n(m-d) + c)$-connected.
However the bottom map is an equivalence by excision, so we conclude that $F(X) \ra P_{n-1} F(X)$ is $(n(m-d) + c)$-connected, completing the induction.
\end{proof}

\bibliographystyle{amsalpha}
\bibliography{tower}{}

\providecommand{\bysame}{\leavevmode\hbox to3em{\hrulefill}\thinspace}
\providecommand{\MR}{\relax\ifhmode\unskip\space\fi MR }
% \MRhref is called by the amsart/book/proc definition of \MR.
\providecommand{\MRhref}[2]{%
  \href{http://www.ams.org/mathscinet-getitem?mr=#1}{#2}
}
\providecommand{\href}[2]{#2}
\begin{thebibliography}{MMSS01}

\bibitem[Aro99]{arone_snaith}
G.~Arone, \emph{A generalization of snaith-type filtration}, Transactions of
  the American Mathematical Society \textbf{351} (1999), no.~3, 1123--1150.

\bibitem[BK87]{bousfield1987homotopy}
A.K. Bousfield and D.M. Kan, \emph{Homotopy limits, completions and
  localizations}, vol. 304, Springer, 1987.

\bibitem[B{\"o}d87]{bodigheimer1987stable}
C.F. B{\"o}digheimer, \emph{Stable splittings of mapping spaces}, Algebraic
  Topology (1987), 174--187.

\bibitem[Bro69]{browder1969kervaire}
W.~Browder, \emph{The kervaire invariant of framed manifolds and its
  generalization}, The Annals of Mathematics \textbf{90} (1969), no.~1,
  157--186.

\bibitem[CD06]{chorny2006homotopy}
B.~Chorny and W.G. Dwyer, \emph{Homotopy theory of small diagrams over large
  categories}, Arxiv preprint math/0607117 (2006).

\bibitem[Chi10]{ching2010chain}
M.~Ching, \emph{A chain rule for goodwillie derivatives of functors from
  spectra to spectra}, American Mathematical Society \textbf{362} (2010),
  no.~1, 399--426.

\bibitem[Cho07]{chorny2007brown}
B.~Chorny, \emph{Brown representability for space-valued functors}, Arxiv
  preprint arXiv:0707.0904 (2007).

\bibitem[Chu11]{church2011homological}
T.~Church, \emph{Homological stability for configuration spaces of manifolds},
  Inventiones Mathematicae (2011), 1--40.

\bibitem[CJ02]{cohen2002homotopy}
R.L. Cohen and J.D.S. Jones, \emph{A homotopy theoretic realization of string
  topology}, Mathematische Annalen \textbf{324} (2002), no.~4, 773--798.

\bibitem[CJ13]{cohen2013gauge}
\bysame, \emph{Gauge theory and string topology}, arXiv preprint
  arXiv:1304.0613 (2013).

\bibitem[CJY04]{cohen2004loop}
R.L. Cohen, J.D.S. Jones, and J.~Yan, \emph{The loop homology algebra of
  spheres and projective spaces}, Progress In Mathematics \textbf{215} (2004),
  77--92.

\bibitem[CK09]{cohen2009umkehr}
R.L. Cohen and J.R. Klein, \emph{Umkehr maps}, Homology, Homotopy and
  Applications \textbf{11} (2009), no.~1, 17--33.

\bibitem[CKS08]{cohen2008homotopy}
R.L. Cohen, J.R. Klein, and D.~Sullivan, \emph{The homotopy invariance of the
  string topology loop product and string bracket}, Journal of Topology
  \textbf{1} (2008), no.~2, 391--408.

\bibitem[Coh80]{cohen1980stable}
R.L. Cohen, \emph{Stable proofs of stable splittings}, Math. Proc. Cambridge
  Philos. Soc \textbf{88} (1980), 149--151.

\bibitem[CS99]{chas1999string}
M.~Chas and D.~Sullivan, \emph{String topology}, Arxiv preprint math/9911159
  (1999).

\bibitem[dBW12]{de2012manifold}
P.B. de~Brito and M.S. Weiss, \emph{Manifold calculus and homotopy sheaves},
  arXiv preprint arXiv:1202.1305 (2012).

\bibitem[DK80]{dwyerkanfunction}
W.G. Dwyer and D.M. Kan, \emph{Function complexes in homotopical algebra},
  Topology \textbf{19} (1980), no.~4, 427--440.

\bibitem[Dwy96]{dwyerBG}
W.G. Dwyer, \emph{The centralizer decomposition of {BG}}, Progress In
  Mathematics \textbf{136} (1996), 167--184.

\bibitem[FT04]{felix2004monoid}
Y.~F{\'e}lix and J.C. Thomas, \emph{Monoid of self-equivalences and free loop
  spaces}, Proceedings of the American Mathematical Society \textbf{132}
  (2004), no.~1, 305.

\bibitem[Goo90]{calc1}
T.G. Goodwillie, \emph{Calculus {I}: The first derivative of pseudoisotopy
  theory}, K-theory \textbf{4} (1990), no.~1, 1--27.

\bibitem[Goo91]{calc2}
\bysame, \emph{Calculus {II}: analytic functors}, K-theory \textbf{5} (1991),
  no.~4, 295--332.

\bibitem[Goo03]{calc3}
\bysame, \emph{Calculus {III}: Taylor series}, Geometry \& Topology \textbf{7}
  (2003), 645--711.

\bibitem[GS08]{gruher2008generalized}
K.~Gruher and P.~Salvatore, \emph{Generalized string topology operations},
  Proceedings of the London Mathematical Society \textbf{96} (2008), no.~1,
  78--106.

\bibitem[GW99]{goodwillie1999embeddings}
T.G. Goodwillie and M.~Weiss, \emph{Embeddings from the point of view of
  immersion theory: Part {II}}, Geometry and Topology \textbf{3} (1999),
  no.~103, 118.

\bibitem[Hel08]{helmstutler2008model}
R.D. Helmstutler, \emph{Model category extensions of the
  {P}irashvili-{S}{\l}omi\'{n}ska theorems}, Arxiv preprint arXiv:0806.1540
  (2008).

\bibitem[Hov99]{hovey1999model}
M.~Hovey, \emph{Model categories}, Mathematical Surveys and Monographs, no.~63,
  American Mathematical Society, 1999.

\bibitem[May75]{may1975classifying}
J.P. May, \emph{Classifying spaces and fibrations}, Memoirs of the American
  Mathematical Society, no. 155, American Mathematical Society, 1975.

\bibitem[McC01]{mccarthy2001dual}
R.~McCarthy, \emph{Dual calculus for functors to spectra}, Contemporary
  Mathematics \textbf{271} (2001), 183--216.

\bibitem[McD75]{mcduff1975configuration}
D.~McDuff, \emph{Configuration spaces of positive and negative particles},
  Topology \textbf{14} (1975), no.~1, 91--107.

\bibitem[MMSS01]{mmss}
M.A. Mandell, J.P. May, S.~Schwede, and B.~Shipley, \emph{Model categories of
  diagram spectra}, Proceedings of the London Mathematical Society \textbf{82}
  (2001), no.~02, 441--512.

\bibitem[MS06]{ms}
J.P. May and J.~Sigurdsson, \emph{Parametrized homotopy theory}, Mathematical
  Surveys and Monographs, vol. 132, American Mathematical Society, 2006.

\bibitem[Pir00]{pirashvili2000dold}
T.~Pirashvili, \emph{Dold-kan type theorem for ${\Gamma}$-groups},
  Mathematische Annalen \textbf{318} (2000), no.~2, 277--298.

\bibitem[Sch09]{schlicht}
C.~Schlichtkrull, \emph{The cyclotomic trace for symmetric ring spectra}, Arxiv
  preprint arXiv:0903.3495 (2009).

\bibitem[Seg74]{segal1974categories}
G.~Segal, \emph{Categories and cohomology theories}, Topology \textbf{13}
  (1974), no.~3, 293--312.

\bibitem[Shu06]{shulman2006homotopy}
M.~Shulman, \emph{Homotopy limits and colimits and enriched homotopy theory},
  Arxiv preprint math/0610194 (2006).

\bibitem[Wei99]{weiss1999embeddings}
M.~Weiss, \emph{Embeddings from the point of view of immersion theory: Part
  {I}}, Arxiv preprint math/9905202 (1999).

\bibitem[WW93]{weiss1993assembly}
M.~Weiss and B.~Williams, \emph{Assembly}, Novikov conjectures, index theorems
  and rigidity \textbf{2} (1993), 332--352.

\end{thebibliography}

Department of Mathematics \\
University of Illinois at Urbana-Champaign \\
1409 W Green St \\
Urbana, IL 61801 \\
\texttt{cmalkiew@illinois.edu}

\end{document}